\newtheorem{theorem}{Theorem}
\newtheorem{lemma}{Lemma}
\newtheorem{remark}{Remark}
\newtheorem{corollary}{Corollary}
\newtheorem{example}{Example}
\newtheorem{definition}{Definition}
\newcommand{\Z}{{\bf Z}}
\newcommand{\T}{{\bf T}}
\newcommand{\X}{{\bf X}}
\newcommand{\bfx}{{\bf x}}
\newcommand{\bfy}{{\bf y}}
\newcommand{\V}{{\bf V}}
\newcommand{\bfD}{{\bf D}}
\newcommand{\z}{{\bf z}}
\newcommand{\U}{{\bf U}}
\newcommand{\bfv}{{\bf v}}
\newcommand{\x}{{\bf x}}
\newcommand{\bmmu}{{\bm\mu}}
\newcommand{\bmrho}{{\bm\rho}}
\newcommand{\bmchi}{{\bm\chi}}
\makeatletter \@addtoreset{equation}{section} \makeatother
\author{Yashi Wei}
\author{Jiang Hu}
\author{Zhidong Bai}
\affil{KLASMOE and School of Mathematics and Statistics, Northeast Normal University, China.}
\date{\today}  
\title{A general partial Cram\'{e}r's condition for Edgeworth expansion of a function of sample means with applications}
\begin{document}
	
	\maketitle
	
	\begin{abstract}
			A large class of statistics can be formulated as smooth functions of sample means of random vectors.  In this paper, we propose a general partial Cram\'{e}r's condition (GPCC) and apply it to establish the validity of the Edgeworth expansion for the distribution function of these functions of sample means. Additionally, we apply the proposed theorems to several specific statistics. In particular, by verifying the GPCC, we demonstrate for the first time the validity of the formal Edgeworth expansion of Pearson's correlation coefficient between random variables with absolutely continuous and discrete components. Furthermore, we conduct a series of simulation studies that show the Edgeworth expansion has higher accuracy.
	\end{abstract}
	
	\textbf{KEY WORDS: Edgeworth expansion; partial Cram\'{e}r's condition; function of sample means; Pearson's correlation coefficient.
	}
	
	\tableofcontents
	
	\section{Introduction}\label{sec:intro}
	
In classical multivariate analysis, most fundamental statistics can be expressed as functions of sample means. These include the sample mean, sample variance, sample covariance, Pearson's correlation coefficient,  and the empirical distribution function. It is straightforward to demonstrate that the normalized functions of sample means are asymptotically normal under mild conditions, as shown by the delta method. 

However, since the sample size is finite in statistical inference, a more precise evaluation of the asymptotic distributions is necessary. The asymptotic expansion of sample means can be traced back to \cite{cramer1928composition}. Subsequently, \cite{hsu1945approximate} obtained an asymptotic expansion of the sample variance under the assumption of non-singularity of the population. \cite{gotze1978asymptotic} derived an asymptotic expansion of the expectation of a smooth function of the sample mean under the moment condition. For more general cases, \cite{bhattacharya1978validity} established the validity of Edgeworth expansions for functions of sample means under Cram\'{e}r's condition. 

Specifically, consider that $\Z_1, \dots, \Z_n$ are independent and identically distributed (i.i.d.) random vectors in $R^k$.  A random vector $\Z$ in $R^k$ is said to satisfy Cram\'{e}r's condition if its characteristic function $v_\Z({\bf t})=\mathbb{E}(e^{i{\bf t}\Z^\top})$ adheres to the following condition:
\begin{equation}\label{CCC}
	\limsup_{\|{\bf t}\|\to\infty}|v_\Z({\bf t})|<1,
\end{equation}
where ${\bf t}=(t_1,t_2,\dots,t_k)$ and $\|{\bf t}\|$ denotes the standard Euclidean norm of ${\bf t}$.
It is important to note that any distribution with an absolutely continuous component satisfies Cram\'{e}r's condition by the Riemann-Lebesgue lemma, while any purely discrete distribution does not satisfy Cram\'{e}r's condition. Let $H$ be a real-valued Borel measurable function defined on $R^k$. Consider the statistic 
\begin{align}\label{Wn}
	W_n=n^{1/2}(H(\bar \Z)-H({\bm\mu})),
\end{align}
where $\bar \Z=\frac 1n \sum_{j=1}^n\Z_j=(\bar Z_1,\bar Z_2,\dots,\bar Z_k)$ and $\bmmu=\mathbb{E}\Z_1=(\mu_1,\mu_2,\dots,\mu_k) $. 
If the distribution of $\Z_1$ satisfies Cram\'{e}r's condition and has sufficiently many finite moments, under certain smoothness conditions on $H$, \cite{bhattacharya1978validity} proved that 
\begin{align}\label{EWE1}
	\sup_{x}\lvert \mathbb{P}(W_n\leq x)-\Psi_{s,n}(x)\rvert=o(n^{-(s-2)/2}),
\end{align}
where 
\begin{equation}\label{EWE2}
	\Psi_{s,n}(x)=\Phi_{\sigma^2}(x)+\sum_{j=1}^{s-2}n^{-j/2}p_j(x)\phi_{\sigma^2}(x),
\end{equation}
$\sigma^2$ is the limiting variance of $W_n$,
$p_j$'s are polynomials whose coefficients do not depend on $n$, and $\Phi_{\sigma^2}$  and $\phi_{\sigma^2}$ are the cumulative distribution function and the probability density function of a normal distribution with mean zero and variance $\sigma^2$, respectively. 

The Edgeworth expansion from the standardization of sums of i.i.d.  random variables has emerged as a powerful tool in statistics. As shown in Equation \eqref{EWE2}, the Edgeworth expansion corrects the normal term in the central limit theorem (CLT). Higher-order correction terms can be obtained if information on the third or higher moments of the underlying distribution is available. Therefore, using an Edgeworth expansion can increase the convergence rate of statistics or improve the accuracy of statistical inference. There is extensive literature on constructing classical Edgeworth expansion theories, including works by \cite{chung1946approximate}, \cite{feller1991introduction}, \cite{petrov2000classical}, \cite{hall1987edgeworth}, \cite{bhattacharya1988moment}, \cite{bai1992note}, \cite{baiedgeworth}, \cite{hall2013bootstrap}, \cite{decrouez2013normal}, \cite{angst2017weak}, among others. Their pioneering work has been instrumental in guiding later researchers in making high-accuracy statistical inferences. 

Nowadays, the Edgeworth expansion method is widely used as a common asymptotic method in various fields. For instance, \cite{kabluchko2017general} and \cite{podolskij2016edgeworth} applied Edgeworth expansion technology to the profiles of random trees and functions of diffusion processes, respectively. For Bayesian estimation, \cite{kolassa2020validity} rigorously established the validity of formal Edgeworth expansions of posterior densities, demonstrating that their results outperform other existing Edgeworth-type expansions. For a finite sample, \cite{zhilova2022new} studied the accuracy of the Edgeworth expansions in finite sample multivariate settings, establishing approximating bounds with explicit dependence on dimension and sample size.  \cite{zhang2022edgeworth} obtained a high-order approximation of the sample distribution of a given studentized network moment and applied their results to network inference. \cite{he2024higher} obtained higher-order coverage errors for batching methods by building Edgeworth-type expansions on t-statistics.

However, the validity of an Edgeworth expansion is not universally guaranteed, as it relies on specific assumptions regarding the underlying distribution. For instance, the classical Edgeworth expansion for the sample mean function $W_n$ requires the distribution to satisfy Cram\'{e}r's condition. This condition requires that the characteristic function of the random vector $\Z$ decays to zero sufficiently fast. Distributions with discrete components fail to satisfy this criterion, as their characteristic functions are periodic and do not decay. \cite{bai1991edgeworth} proposed a new smoothness condition that relaxes some of the stringent requirements of Cram\'{e}r's condition. This innovation broadens the class of distributions for which valid Edgeworth expansions can be obtained, thereby allowing for more accurate asymptotic approximations in a wide range of practical applications.
This smoothness condition requires that the conditional characteristic function of at least one component of the random vector, given the remaining components, decays sufficiently rapidly. A random vector with random components that are functions of a single underlying random variable fails this condition because the conditional distribution of any component is degenerate and its characteristic function, being periodic, does not decay to zero.

To address these limitations, we propose a new smoothness condition called the general partial Cram\'{e}r's condition (GPCC).  The GPCC framework unifies and generalizes both the classical Cram\'{e}r's condition and the partial Cram\'{e}r's condition. Weakening these conditions is beneficial not only from a theoretical perspective but also for practical statistical applications. For example, under GPCC, we can now derive the Edgeworth expansion of the distribution function of Pearson's correlation coefficient between a continuous and a discrete variable, which was not previously possible. The Pearson correlation coefficient between a continuous variable and a discrete variable (such as a mixture of a Chi-square distribution and a Poisson distribution) can be written in the form of a function of sample means, $H(\bar \Z)$, where $\Z=(X,Y,X^2,Y^2,XY)$, with $X$ following a Chi-square distribution and $Y$ following a Poisson distribution. Due to the discrete components of $\Z$, this statistic does not satisfy the classical Cram\'{e}r's condition. Furthermore, due to the structure of the correlation coefficient when written as a function of a $5$-dimensional vector, it also cannot satisfy partial Cram\'{e}r's condition. Specifically,  given any four components, the remaining component follows a discrete distribution and thus does not satisfy the partial Cram\'{e}r's condition. In addition, we can apply GPCC to other areas, such as the ratio of samples in survival analysis and the Z-score test statistic.

In this paper, we propose the general partial Cram\'{e}r's condition, which guarantees the validity of the Edgeworth expansion for functions of sample means. The GPCC is weaker than both the classical Cram\'{e}r's condition and the partial Cram\'{e}r's condition. Moreover, we prove that the Edgeworth expansion for functions of sample means remains valid if the sum of a number of i.i.d. copies of the basic vector satisfies the GPCC. Furthermore, we demonstrate that the GPCC is applicable to various practical statistical applications, such as the sample correlation coefficient, the ratio of samples in survival analysis, and the Z-score test statistic.

The remaining sections are organized as follows. In Section \ref{mrs}, we introduce the general partial Cram\'{e}r's conditions and state our main theorems regarding the validity of the Edgeworth expansion for functions of sample means.  In Section \ref{sec5}, we apply our theoretical results to several specific statistics and demonstrate the simulation results of our expansion in comparison with normal approximation results. The core proof of Theorem \ref{distance} stated in Section \ref{mrs} is provided in Section \ref{sec6}. The proof details are presented in Sections \ref{appdx}, \ref{appdx2}, \ref{appdx3}.

	\section{Main results}\label{mrs}
	\subsection{General partial Cram\'{e}r's condition}\label{sec2}
	In this subsection, we introduce a general partial Cram\'{e}r's condition to complement the classical Cram\'{e}r's condition and the later partial Cram\'{e}r's condition. The Cram\'{e}r's condition has consistently been the most utilized smoothness condition within the Edgeworth expansion method. For instance, \cite{calonico2022coverage} obtained the Edgeworth expansions of local polynomials based on Wald-type t statistics under Cram\'{e}r's conditions. \cite{bobkov2013rate} established an Edgeworth-type expansion for the entropy distance to the class of normal distributions of sums of i.i.d random variables under Cram\'{e}r's conditions. Additionally, \cite{chatterjee2018edgeworth} derived the validity of the Edgeworth expansion under Cram\'{e}r's conditions when the time series is a linear process driven by a series of i.i.d. random vectors. There are also numerous studies on Edgeworth expansion for U-statistics conducted under Cram\'{e}r's condition, such as \cite{jing2010unified}, \cite{bloznelis2022edgeworth} and recently \cite{jiang2023saddlepoint}.  Clearly,  the Cram\'{e}r's condition plays a pivotal role in the field of Edgeworth expansions.
	
	However, in certain applications, Cram\'{e}r's condition may be a strong assumption as it requires all components of the random vector to be non-lattice. A random variable $X_1$ that takes values in a set of the form $\{a+bk; k\in\mathbb{Z}\}$ for some $a,b\in R$ (where $b\neq 0$) is called a lattice distribution. Generally, discrete distributions can be considered ``approximate lattices'' because if they are not already lattice distributions, they can be viewed as periodically decreasing lattice distributions. If a distribution contains an absolutely continuous component, it is essentially a non-lattice distribution. In many studies, the non-lattice condition is often replaced by the stronger Cram\'{e}r's condition.
	
	For the Edgeworth expansion of the distribution function of $W_n$, when one component of the basic random vector is discrete, the Cram\'{e}r's condition is not applicable. In such cases, \cite{bai1991edgeworth} established the validity of the Edgeworth expansion of functions of sample means under the so-called partial Cram\'{e}r's condition.  A random vector $\Z=(Z_1,\dots, Z_k)$ with values in $R^k$ is said to satisfy the partial Cram\'{e}r's condition if its conditional characteristic function
	\[v_1(t)=\mathbb{E}[\exp(itZ_{1})|Z_{2},\dots,Z_{k}]\]
	is such that
	\begin{equation}\label{PCC}
		\limsup_{|t|\to\infty}\mathbb{E}|v_1(t)|<1.
	\end{equation}
	Note that any random vector with one component being independent of the other components and absolutely continuous satisfies the partial Cram\'{e}r's condition. This pioneering condition complements the classical Cram\'{e}r's condition and enables Edgeworth expansion of more statistics.
	
	Although the partial Cram\'{e}r's condition is generally weaker than Cram\'{e}r's condition, it has a significant limitation. If the basic vector contains two components that are functions of a common variable, say $X$ and $X^2$, then the latter is deterministic given the former. Conversely, given the latter, the former only takes on two different values, $\pm x$. Consequently, the partial Cram\'er condition is not satisfied. This means that many statistics, including fundamental ones such as sample variance, sample covariance, and sample correlation coefficient,  cannot satisfy the partial Cram\'{e}r's condition. Therefore, we propose a new smoothness condition that encompasses both the classical Cram\'{e}r's condition and the partial Cram\'{e}r's condition, which we refer to as the general partial Cram\'{e}r's condition (GPCC).

	\begin{definition}[GPCC]\label{DefGPCC}
		A random vector $\Z$ with values in $R^k$ is said to satisfy the  general partial Cram\'{e}r's condition  (GPCC) if there exists an integer $1\leq a\leq k$, the conditional characteristic function of $\Z$,
		\[v_a(\mathbf{t}_a)=\mathbb{E}[\exp(it_1Z_{1}+\dots+it_a Z_{a})|Z_{(a+1)},\dots,Z_{k}],\]
		is such that
		\begin{equation}\label{Order 1}
			\limsup_{\| \mathbf{t}_a\|\to\infty}\mathbb{E}|v_a( \mathbf{t}_a)|<1.
		\end{equation}
		where $\mathbf{t}_a=(t_1,\dots,t_a)$ and if $a=k$, $v_a(\mathbf{t}_a)=v_\Z(\mathbf{t})$ is the characteristic function of $\Z$.
	\end{definition}
	\begin{remark}
		It is clear that when $a=k$ and $a=1$, the GPCC reduces to the Cram\'{e}r's condition \eqref{CCC} and the partial Cram\'{e}r's condition \eqref{PCC}, respectively. 
	\end{remark}

	The GPCC essentially requires that the joint distribution of the first $a$ components, conditional on the remaining $k - a$ components, possesses a sufficiently strong ``non-lattice'' or ``continuous'' nature to ensure the decay of its conditional characteristic function. In the following, we present some examples that do not satisfy the Cram\'{e}r's condition \eqref{CCC} and the partial Cram\'{e}r's condition \eqref{PCC}, but satisfy the GPCC.

	\begin{example}\label{3-dimension}
		Suppose that $(X,X^2,Y)$ is a random vector where $X$ has an absolutely continuous component, and $Y$ is a discrete random variable independent of $X$. We can then find that :
		\[\limsup _{\|\mathbf{ t}_2\|\to\infty}\mathbb{E}\Big|\mathbb{E}\Big(\exp[i (t_{1}X+t_{2}X^2)]\Big| Y\Big)\Big|<1,\] 
		i.e., the GPCC is satisfied. Additionally, it does not satisfy the partial Cram\'{e}r's condition.  For $X$ and $X^2$, the latter is deterministic given the former, while given the latter, the former only takes on two different values, $\pm x$. The Cram\'{e}r's condition is not met because one of the components, $Y$, is a discrete random variable.
	\end{example}
	
	\begin{example}\label{5-dimension}
		Consider the scenario where $W=(X, Y,  X^2, Y^2, XY)$ is a random vector, and $X$ has an absolutely continuous component. $Y$ is a discrete random variable that is independent of $X$ and is not a constant. We can then find that:
		\[\limsup _{\|\mathbf{ t}_3\|\to\infty}\mathbb{E}\Big|\mathbb{E}\Big(\exp[i (t_{1}X+t_{2}X^2+t_{3}XY)]\Big| Y,Y^2\Big)\Big|<1,\]
		i.e., the GPCC is satisfied. Additionally, it does not satisfy the partial Cram\'{e}r's condition. Given any four components, the remaining component follows a discrete distribution, hence it does not satisfy the partial Cram\'{e}r's condition. The Cram\'{e}r's condition is not met because one of the components, $Y$, is a discrete random variable.
	\end{example}

	\subsection{Statement of the general Edgeworth expansion}\label{section2.2}
	In this section, we review the theoretical results from \cite{bhattacharya2010normal}, which focus on the Edgeworth expansion of the distribution of $k$-dimensional random vectors. Let $\Z$ be a random vector in $R^k$ and $G$ be the probability measure corresponding to the random vector $\Z$. Assume $\hat G$ is the characteristic function of $\Z$. Let $\Phi_{0,\U}$ be the normal distribution in $R^k$ with zero mean and covariance matrix $\U$, and denote its probability density function by $\phi_{0,\U}$. Let $\bm\chi_\bfv$ be the $\bfv$-th  cumulant of  random vector $\Z$, which is defined as,
	\begin{equation}\label{logtaylor}
		\log\hat G(t)=\sum_{|\bfv|\le s}\bm\chi_{\bfv}\frac{(it)^{\bfv}}{\bfv!}+o(\|t\|^s),\qquad (t\to 0).\end{equation}
	Assume $$\beta_s(\textbf{z})=s!\sum_{|\bfv|=s}\frac{\bm\chi_\bfv}{\bfv!}\z^\bfv,$$  where $\z=(z_1,\dots, z_k)$, $\z^\bfv=\prod_{i=1}^k z_i^{v_i}$, and  is a nonnegative integral vector in $R^k$. Besides, $|\bfv|=|v_1|+\dots+|v_k|$  and  $\bfv!=\prod_{i=1}^kv_i!$. Let $u$ is a real variable in $R$. Since
	\[\frac{d^s}{du^s}\log \hat G(ut)\Big|_{u=0}=s!\sum_{|\bfv|=s}\bm\chi_{\bfv}\frac{(it)^\bfv}{\bfv!},\]
	we can interpret $\beta_s(\textbf{z})$ as the $s$-th cumulant of a probability measure on $R$. Define the formal polynomials $p_s(\textbf{z}:\{\bm\chi_\bfv\})$ through the following identity between two formal power series:
	\[1+\sum_{s=1}^\infty p_s(\textbf{z}:\{\bm\chi_\bfv\})u^s=\exp\bigg(\sum_{s=1}^\infty \frac{\beta_{s+2}(\textbf{z})}{(s+2)!} u^s\bigg).\]
	For some integer $s\ge 3$, according to equation \eqref{logtaylor}, we can obtain:
	\begin{align*}
		\log\hat G^n\Big(\frac{t}{n^{1/2}}\Big)&=n\log\hat G\Big(\frac{t}{n^{1/2}}\Big)=-\frac 12\langle t, \U t\rangle+\sum_{r=1}^{s-2}\frac{\beta_{r+2}(it)}{(r+2)!}n^{-r/2}+n\times o\Big(\Big\|\frac{t}{n^{1/2}}\Big\|^s\Big).
	\end{align*}
	Thus, for any fixed $t\in R^k$,
	\begin{align*}
		\hat G^n\Big(\frac{t}{n^{1/2}}\Big)&=\exp\Big(-\frac 12\langle t, \U t\rangle\Big)\times\exp\bigg(\sum_{r=1}^{s-2}\frac{\beta_{r+2}(it)}{(r+2)!}n^{-r/2}+o(n^{-(s-2)/2})\bigg)\\
		&=\exp\Big(-\frac 12\langle t, \U t\rangle\Big)\bigg[1+\sum_{s=1}^\infty n^{-r/2}p_s(\textbf{z}:\{\bm\chi_\bfv\})\bigg]\big(1+o(n^{-(s-2)/2})\big),
	\end{align*}
	which is the asymptotic expansion for the characteristic distribution of independent sums of $k$-dimensional random vectors.
	We denote  \[P_r(-\phi_{0,\U}:\{\bm\chi_\bfv\})=p_r(-\bfD:\{\bm\chi_\bfv\})\phi_{0,\U},\]
	where $\bfD=(D_1,\dots,D_k)$ is a vector consisting of differential operators, and $-\bfD=(-D_1,\dots,-D_k)$. 
	
	\begin{remark}
		It is worth noting that the Fourier transform of $P_r(-\phi_{0,\U}:\{\bm\chi_\bfv\})$ is the coefficient of $n^{-r/2}$ in the asymptotic expansion of the sum of independent random vectors. Let $P_r(-\Phi_{0,\U}:\{\bm\chi_\bfv\})$ be a finite signed measure on $R^k$ with a probability density function $P_r(-\phi_{0,\U}:\{\bm\chi_\bfv\})$. Thus, the distribution function of $P_r(-\Phi_{0,\U}:\{\bm\chi_\bfv\})$ is obtained by using the operator $P_r(-\bfD:\{\bm\chi_\bfv\})$ on the normal distribution function $\Phi_{0,\U}$, i.e.,
		\[P_r(-\Phi_{0,\U}:\{\bm\chi_\bfv\})=p_r(-\bfD:\{\bm\chi_\bfv\})\Phi_{0,\U}.\]
	\end{remark}
	
	\begin{remark}If $k=1$, let $\mu_3$ be the 3-th moment of $\Z$ and $\chi_3$ be the 3-th cumulant of $\Z$, then
		\[P_1(-\phi_{0,\mathbf{I}}:\{\bm\chi_{\mathbf{v}}\})=\frac 16\chi_3(x^3-3x)\phi(x).\]
		Additionally, if the probability measure $G$ has zero mean, then
		\[P_1(-\phi_{0,\mathbf{I}}:\{\bm\chi_{\mathbf{v}}\})=\frac 16\mu_3(x^3-3x)\phi(x).\]
	\end{remark}

	\subsection{Edgeworth expansion under GPCC}\label{sec3}
	In this section, we establish the validity of the formal Edgeworth expansion of a function of sample means under the GPCC. Let $f$ be a real-valued and Borel-measurable function on $R^k$. We define a function $M_{s'}(f)$ as follows:
	\begin{equation*}
		M_{s'}(f)=\left\{
		\begin{aligned}
			\sup_{\bfx\in R^k}(1+\Vert \bfx\Vert^{s'})^{-1}\vert f(\bfx)\vert&,&   s'>0,\\[3mm]
			\sup_{\bfx,\bfy\in R^k}\vert f(\bfx)-f(\bfy)\vert&,&  s'=0.
		\end{aligned}
		\right.
	\end{equation*}
	Next, we define a translate $f_y$ of $f(x)$ by $y \in R$ as $f_{y}(x)=f(x+y)$. Finally, we consider the modulus of continuity and its Gaussian average:
	\[\omega_f(x:\epsilon):=\sup_{y\in B(x,\epsilon)}f(y)-\inf_{y\in B(x,\epsilon)}f(y),\quad \bar\omega_f(\epsilon:\Phi) :=\int\omega_f(x:\epsilon)\,d\Phi(x),\]
	where $B(x:\epsilon)$ denotes an open ball with center $x$ and radius $\epsilon$, and $\Phi(x)$ is the distribution function of the standard normal random variable.
	
	Consider a sequence of i.i.d.  random vectors  $\{\textbf{Z}_i, i=1,\dots,n\}$ with values in $R^k$, having zero means and a nonsingular covariance matrix $\bf V$. Write $\textbf{Z}_j=(Z_{j1},\dots, Z_{jk})$. Let $C_n\overset{\Delta}{=}\{Z_{j(a+1)},\dots, Z_{jk}, j=1,\dots,n\}$, where $1\leq a\leq k$ is an integer. Let $Q_n^*$ be the conditional distribution of $n^{1/2}\bar \Z=n^{-1/2}\sum_{i=1}^n\textbf{Z}_i$ given $C_n$.

	\begin{theorem}\label{distance}
		Assume that the distribution function $G_1$ of $\textbf{Z}_1$ has a finite $s$-th absolute moment for some integer $s \ge 3$. Additionally, assume the conditional distribution $G^*_1$ given $C_n$ satisfies the GPCC \eqref{Order 1}. Let $\U$  and  $\chi_\bfv$ be the covariance matrix and $\bfv$-th cumulant of $G_1$ respectively ($3\le \vert \bfv\vert\le s$). Then, for every real-valued, Borel-measurable function $f$ on $R^k$ satisfying
		\[M_{s'}(f)<\infty\]
		for some $s'$, $0\le s' \le s$, we have that 
		\begin{align}
			\bigg\vert \mathbb{E}\int f d\bigg( Q^*_n-\sum_{r=0}^{s-2}n^{-r/2}&P_r(-\Phi_{0,\U}:\{\bm\chi_\bfv\})\bigg)\bigg\vert\\\nonumber
			&\le M_{s'}(f)\delta_1(n)+c(s,k)\bar\omega_f(2e^{-dn}:\Phi_{0,\U}),
		\end{align}
		where $d$ is a suitable positive constant, $c(s,k)$ and $C(s,k)$ depend only on $s$ and $k$, and 
		\[\delta_1(n)=o(n^{-(s-2)/2}),\qquad (n\to\infty).\]
		Moreover,   the quantities $d$, $\delta_1(n)$ do not depend on $f$.
	\end{theorem}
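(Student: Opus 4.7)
The plan is to follow the Bhattacharya--Ranga Rao (2010, Chapter 4) Edgeworth-expansion template, replacing the classical Cramér control of the characteristic function $\hat G$ of $\Z_1$ by a GPCC-based control. Because $\mathbb{E}\int f\,dQ_n^*=\int f\,dQ_n$, where $Q_n$ is the unconditional law of $n^{1/2}\bar{\Z}$, and the Edgeworth signed measures $P_r(-\Phi_{0,\U}:\{\bm\chi_\bfv\})$ are deterministic, the outer expectation in the statement is only bookkeeping: it suffices to prove the classical-looking Edgeworth inequality for the unconditional law $Q_n$, with the GPCC replacing Cramér in the Fourier step.

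I would first apply a smoothing inequality (for instance Corollary 11.5 of Bhattacharya--Ranga Rao) with bandwidth $\epsilon=2e^{-dn}$; this automatically generates the modulus-of-continuity term $c(s,k)\bar\omega_f(2e^{-dn}:\Phi_{0,\U})$ and reduces the proof to the Fourier-side estimate
$$\int_{\|t\|\le T_n}|\hat{Q}_n(t)-\hat{\Psi}_n(t)|\,|\hat{K}_\epsilon(t)|\,dt \;=\; o(n^{-(s-2)/2})\,M_{s'}(f),\qquad T_n\sim e^{dn},$$
where $\hat{\Psi}_n$ is the Fourier transform of the truncated Edgeworth series and $\hat{Q}_n(t)=\hat{G}(t/n^{1/2})^n$. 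On the low-frequency set $\|t\|\le c_1 n^{1/2}$, a Taylor expansion of $\log\hat{G}(t/n^{1/2})$ to order $s$, licensed by the $s$-th absolute-moment hypothesis, matches $\hat{Q}_n$ with $\hat{\Psi}_n$ to within a Gaussian-weighted polynomial of size $o(n^{-(s-2)/2})$, which integrates within budget. On the complementary set $c_1 n^{1/2}\le\|t\|\le T_n$, $\hat{\Psi}_n$ is Gaussian times polynomial and hence exponentially small, so the entire burden reduces to the pointwise bound $|\hat{Q}_n(t)|\le e^{-dn}$.

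The principal obstacle is this high-frequency bound on $|\hat{Q}_n|$, because GPCC only delivers $|\hat{G}(\tau)|\le 1-\delta$ when $\|\mathbf{t}_a\|$ (the norm of the first $a$ components of $\tau$) is large, via $|\hat{G}(\tau)|\le\mathbb{E}|v_a(\mathbf{t}_a)|\le 1-\delta$; but the high-frequency set also contains directions in which the first $a$ components of $t/n^{1/2}$ are small while the last $k-a$ components are large, along which GPCC gives no direct control and a lattice marginal of the last $k-a$ coordinates can make $|\hat{G}|$ genuinely close to $1$. I would partition this set into three sub-regions: (i) the first $a$ components of $t/n^{1/2}$ have norm at least $\delta_0$, handled directly by GPCC as above; (ii) those components have norm below $\delta_0$ and $\|t\|\le A_0 n^{1/2}$, handled by the second-order Taylor bound together with the non-singularity of $\U$, yielding the Gaussian decay $|\hat{Q}_n(t)|\le e^{-c\|t\|^2}$; and (iii) the genuinely delicate case of small first $a$ components and large last $k-a$ components, which I would address by an iterative conditioning and sample-pairing argument in the spirit of Bai (1991), where conditioning on pairs of the last $k-a$ coordinates allows the symmetrized conditional distribution of the paired first $a$ coordinates to inherit a GPCC-type decay that transfers to the transverse direction. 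Assembling the sub-bounds and integrating against $|\hat{K}_\epsilon|$ completes the Fourier estimate; as a final bookkeeping remark, the conditional cumulants average to the unconditional $\bm\chi_\bfv$ under the outer $\mathbb{E}$, so the Edgeworth coefficients in $\hat{\Psi}_n$ are recovered without adjustment.
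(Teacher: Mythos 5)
Your overall template (exponentially small smoothing bandwidth, then a three-regime Fourier estimate with Taylor matching at low frequencies and a Cram\'er-type bound at high frequencies) is the same skeleton the paper uses, and you correctly isolate the dangerous frequency region. But your opening reduction is where the argument breaks: you pass to the unconditional law $Q_n$ of $n^{1/2}\bar \Z$ on $R^k$ and then need the pointwise bound $|\hat Q_n(t)|\le e^{-dn}$ on your region (iii). That bound is false under GPCC alone: take $k=2$, $a=1$, with $Z_{11}$ absolutely continuous and $Z_{12}$ Rademacher, independent; then GPCC holds, yet $|\hat Q_n(0,\pi n^{1/2})|=|\cos \pi|^{n}=1$. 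The characteristic function genuinely does not decay in the transverse directions when the last $k-a$ coordinates have a lattice component, so no symmetrization or pairing device can rescue step (iii) --- there is nothing to transfer.

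The paper never poses the high-frequency problem unconditionally in $R^k$. It conditions on $C_n$ throughout, so the conditional law is effectively an $a$-dimensional object (the last $k-a$ coordinates are frozen), the smoothing inequality and the Fourier integral live over $t\in R^a$, and the only high-frequency regime that arises is $\Vert \mathbf{t}_a\Vert$ large --- exactly where GPCC applies; a truncation step ($\tilde\Z_j$, needed because only $s$ moments are assumed, and absent from your sketch) together with Lemma \ref{condition} shows the truncated conditional characteristic function inherits the GPCC bound. The price of conditioning is that the Edgeworth polynomials produced are the random conditional ones $P_{ra}$, and a separate substantive step (Lemma \ref{P_ra}) is required to show $\mathbb{E}P_{ra}=P_r+o(n^{-(s-2)/2})$, so that averaging over $C_n$ recovers the deterministic $P_r(-\Phi_{0,\U}:\{\bm\chi_\bfv\})$ in the statement. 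Your closing remark that the conditional cumulants ``average to the unconditional $\bm\chi_\bfv$ without adjustment'' treats as bookkeeping what is in fact the load-bearing lemma. To repair the proposal you would need to restructure the whole Fourier argument around the conditional law from the outset rather than patch region (iii).
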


	Theorem \ref{distance} is a generalized version of the result in \cite{bhattacharya2010normal}. This theorem is particularly useful for proving higher order asymptotic results on $Q_n$.

	\begin{remark}
		Theorem \ref{distance} indicates that our Edgeworth expansion expression may not be the same as that of \cite{bhattacharya2010normal}. However, the difference between them is minimal, with the discrepancy not exceeding $o(n^{-(s-2)/2})$. 
	\end{remark}

	\begin{remark}
		It should be noted that the conditional probability of $\Z$ given $k-a$ variables is still a $k$-dimensional function. Taking the binary case as an example, assume that $X_1$ and $X_2$ are coordinate random variables on a probability space $(R^2,\mathscr B^2, P)$ with an absolutely continuous density function $f(x_1,x_2)$. For $B\in\mathscr B^2$ and $\textbf{x}=(x_1,x_2)\in R^2$, define
		\begin{align*}
			f_1(x_1|x_2)=\left\{\begin{aligned}
				\frac{f(x_1,x_2)}{f_2(x_2)} & &\mathrm{if}  f_2(x_2)>0\\
				f_1(x_1) & &\mathrm{if} f_2(x_2)=0,
			\end{aligned}
			\right. \quad  P(B,\textbf{x})=\int_{\{s:(s,x_2)\in B\}}f_1(s|x_2)ds.
		\end{align*}
		Then, $P(B,\textbf{x})$ is a regular conditional probability measure on $\mathscr B^2$ given $\sigma(X_2)$.
	\end{remark}

	The proof for Theorem \ref{distance} is deferred to Section \ref{sec6}. The following corollary is immediate. Taking $f$ as the indicator of a special Borel set yields:
	\begin{corollary}\label{co1.1}
		Under the assumptions of Theorem \ref{distance}, we have that 
		\begin{equation}
			\sup_{B\in\mathscr{B}^k}\Big|\mathbb{E}Q_n^*(B)-\sum_{r=0}^{s-2}n^{-r/2}P_r(-\Phi_{0,\U}:\{\bm\chi_\bfv\})(B)\Big|=o(n^{-(s-2)/2}),
		\end{equation}
		for every class $\mathscr{B}$ of Borel sets satisfying
		\begin{equation}\label{boundary}
				\sup_{B\in\mathscr{B}^k}\int_{(\partial B)^\epsilon}\phi_{0,\U}(\x)d\x=O(\epsilon).
		\end{equation}
	Here $\partial B$ is the boundary of $B$, $(\partial B)^{\epsilon}$ is the $\epsilon$-neighborhood of $B$ and $\epsilon\to 0$. 
	\end{corollary}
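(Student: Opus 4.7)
The plan is to apply Theorem \ref{distance} with the test function $f = \mathbf{1}_B$, the indicator of a Borel set $B \in \mathscr{B}$, and then take the supremum over $B$. The key observation is that the integral $\int f \, dQ^*_n$ recovers $Q^*_n(B)$, and likewise for each signed measure $P_r(-\Phi_{0,\U}:\{\bm\chi_\bfv\})$, so the left-hand side of Theorem \ref{distance} becomes exactly the quantity inside the absolute value of the corollary.

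Next I would control the two constants appearing in Theorem \ref{distance}. First, for $s'=0$, we have $M_0(\mathbf{1}_B) = \sup_{\bfx,\bfy}|\mathbf{1}_B(\bfx)-\mathbf{1}_B(\bfy)| \le 1$, so the term $M_{s'}(f)\delta_1(n)$ is bounded by $\delta_1(n) = o(n^{-(s-2)/2})$ uniformly in $B$. Second, for the modulus-of-continuity term, note that $\omega_{\mathbf{1}_B}(x:\epsilon) = 1$ precisely when the ball $B(x,\epsilon)$ intersects both $B$ and its complement, which forces $x \in (\partial B)^\epsilon$, and $\omega_{\mathbf{1}_B}(x:\epsilon)=0$ otherwise. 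Consequently
\begin{equation*}
\bar\omega_{\mathbf{1}_B}(\epsilon:\Phi_{0,\U}) = \int \omega_{\mathbf{1}_B}(x:\epsilon)\,d\Phi_{0,\U}(x) = \int_{(\partial B)^\epsilon} \phi_{0,\U}(\x)\,d\x.
\end{equation*}
With $\epsilon = 2e^{-dn}$, the boundary regularity assumption \eqref{boundary} gives $\sup_{B\in\mathscr{B}} \bar\omega_{\mathbf{1}_B}(2e^{-dn}:\Phi_{0,\U}) = O(e^{-dn})$, which is exponentially small and hence $o(n^{-(s-2)/2})$.

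Combining the two pieces, the bound from Theorem \ref{distance} evaluates to $\delta_1(n) + c(s,k)\,O(e^{-dn}) = o(n^{-(s-2)/2})$ with a rate that is independent of $B$ because $d$, $\delta_1(n)$, and the implied constant in \eqref{boundary} do not depend on $B$. Taking the supremum over $B \in \mathscr{B}$ yields the claimed uniform bound. There is essentially no obstacle here beyond verifying that the constants in Theorem \ref{distance} are genuinely uniform over the choice of $f$, which the statement of that theorem explicitly asserts; the only mildly delicate point is the identification of $\bar\omega_{\mathbf{1}_B}$ with the Gaussian measure of an $\epsilon$-neighborhood of $\partial B$, after which the boundary assumption finishes the argument.
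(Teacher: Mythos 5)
Your proposal is correct and is exactly the argument the paper intends: the paper derives Corollary \ref{co1.1} by simply taking $f=\mathbf{1}_B$ in Theorem \ref{distance}, and your verification that $M_0(\mathbf{1}_B)\le 1$ and that $\bar\omega_{\mathbf{1}_B}(\epsilon:\Phi_{0,\U})$ equals the Gaussian mass of $(\partial B)^\epsilon$ (so that \eqref{boundary} with $\epsilon=2e^{-dn}$ makes the smoothing term exponentially small) fills in precisely the details the paper leaves implicit.
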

	
	We are now in a position to consider the Edgeworth expansion result of a function of sample means $H(\bar \Z) $ under the GPCC. In some situations, the mean or higher-order moments of $H(\bar \Z)$ may not exist. To overcome this limitation, the Taylor expansion of $H(\bar \Z)$ has been employed to obtain the Edgeworth expansion of the distribution function of $W_n$. This approach eliminates the need to assume the existence of moments of $H(\bar \Z)$, requiring only the existence of moments of $Z_1$ and the existence of derivatives of $H$ at $\bm{\mu}$. By employing this method, the estimation of the distribution of $W_n$ becomes achievable. Denote the partial derivatives of $H$ at $\bm\mu$ by 
	\[l_{i_1,\dots,i_p}=(D_{i_1}D_{i_2}\cdots D_{i_p}H)(\bm\mu),\quad 1\le i_1,\dots,i_p\le k.\] 
	If all the derivatives of $H$ of order $s$ and less are continuous in a neighborhood of $\bm\mu$,   then the Taylor expansion of $W_n$ in \eqref{Wn}  yields the statistic
	\begin{align}\label{Taylor expansion}
		W_n'=n^{1/2}\Big(\sum_{i=1}^k&l_i(\bar Z_i-\mu_i)+\frac12\sum_{i,j}l_{i,j}(\bar Z_i-\mu_i)(\bar Z_j-\mu_j)+\cdots\\\nonumber
		&+\frac{1}{(s-1)!}\sum_{i_1,\dots,i_{s-1}}l_{i_1,\dots,i_{s-1}}(\bar Z_{i_1}-\mu_{i_1})\cdots(\bar Z_{i_{s-1}}-\mu_{i_{s-1}})\Big),
	\end{align}
	and  $W_n=W'_n+o_p(n^{-(s-2)/2})$. As a result, the asymptotic expansion of the distribution of $W_n'$ coincides with that of $W_n$. Moreover, recall that $\bm{\Sigma}=(\sigma_{ij})$ is the covariance matrix of $\Z_1$. Let $\sigma^2=\sum_{i,j=1}^k \sigma_{ij}l_il_j$ and $\kappa_{j,n}$ be the $j$-th cumulant of $W_n'$. Then, from \eqref{Taylor expansion}, we can obtain 
	\[\kappa_{j,n}=\tilde \kappa_{j,n}+o(n^{-(s-2)/2}),\]
	where $\tilde \kappa_{j,n}=\sigma^2+\sum_{i=1}^{s-2}n^{-i/2}b_{2,i}$ when $ j=2$, while $\tilde \kappa_{j,n}=\sum_{i=1}^{s-2}n^{-i/2}b_{j,i}$ when $ j\neq 2$. Here $b_{j,i}$ depend only on appropriate moments of $\Z_1$ and derivatives of $H$ at $\bmmu$ of orders $s-1$ and less. Then the expression
	\begin{equation}\label{expansion}
		\exp\Big(it\tilde\kappa_{1,n}+\frac{(it)^2}{2}(\tilde\kappa_{2,n}-\sigma^2)+\sum_{j=3}^{s}\frac{(it)^j}{j!}\tilde\kappa_{j,n}\Big)\exp(-\sigma^2t^2/2)
	\end{equation}
	is an approximation of the characteristic function of $W_n'$. Namely, we can obtain an approximation of the characteristic function of $W_n$ by appropriate moments of $\Z_1$ and derivatives of $H$ at $\bmmu$ of orders $s-1$ and less. 
	Thus, we can rewrite \eqref{expansion} as
	\begin{equation}\label{fourierexpansion}
		\exp(-\sigma^2t^2/2)\Big[1+\sum_{r=1}^{s-2}n^{-r/2}\pi_r(it)\Big]+o(n^{-(s-2)/2})=\hat\psi_{s,n}(t)+o(n^{-(s-2)/2}),
	\end{equation}
	where $\pi_r(\cdot)$ $(1\le r\le s-2)$ are polynomials that depend only on the moments of orders $s$ and less of $\Z_1$,
	\[\psi_{s,n}(x)=\Big[1+\sum_{r=1}^{s-2}n^{-r/2}\pi_r\Big(-\frac{d}{dx}\Big)\Big]\phi_{\sigma^2}(x), \quad \Psi_{s,n}(u)=\int_{-\infty}^{u}\psi_{s,n}(x)dx,\]
	and $\hat\psi_{s,n}$ is the Fourier-Stieltjes transform of $\Psi_{s,n}$.
	In addition, let $\mathcal{Q}_n$ be the distribution function of $W_n$. Then  we have the following theorem of the validity of the Edgeworth expansion of $\mathcal{Q}_n$. 
	\begin{theorem}\label{main1}
		Suppose that $\{\Z_j\}$ is a sequence of i.i.d.  random k-vectors. 
		Assume that:  (A1)  all the derivatives of $H$ of order $s$ and less are continuous in a neighborhood of $\bm\mu$, where $s\ge 3$;
		(A2) $\Z_{1}$ has finite $s$-th absolute moment, where $s\ge 3$ is a known integer and  (A3) $\Z_1$ satisfies the  GPCC, then we have that 
		\begin{align}\label{maineq}
			\sup_{x}\lvert \mathcal{Q}_n(x)-\Psi_{s,n}(x)\rvert=o(n^{-(s-2)/2}).
		\end{align}
	\end{theorem}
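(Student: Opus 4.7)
The plan is to reduce the problem to an application of Corollary~\ref{co1.1} through the Taylor polynomial $W_n'$ of $W_n$ defined in \eqref{Taylor expansion}. First I would replace $W_n$ by $W_n'$ in the sup bound. Writing $W_n=W_n'+R_n$, the remainder $R_n$ is of order $n^{1/2}\|\bar\Z-\bm\mu\|^{s}$ on the event $\{\|\bar\Z-\bm\mu\|\le n^{-1/2}(\log n)^{1/2}\}$, whose complement has probability $o(n^{-(s-2)/2})$ by the moment condition~(A2) and Markov's inequality. Combined with the boundedness of the leading density of $W_n'$ (close to $\phi_{\sigma^2}$ plus smooth $n^{-1/2}$ corrections), a sandwich of the form $\mathbb{P}(W_n'\le x-\eta_n n^{-(s-2)/2})-o(n^{-(s-2)/2})\le\mathcal{Q}_n(x)\le\mathbb{P}(W_n'\le x+\eta_n n^{-(s-2)/2})+o(n^{-(s-2)/2})$ with $\eta_n\to 0$ reduces the task to proving $\sup_x|\mathbb{P}(W_n'\le x)-\Psi_{s,n}(x)|=o(n^{-(s-2)/2})$.

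The event $\{W_n'\le x\}$ can be rewritten as $\{\sqrt{n}\,\bar\Z\in B_{n,x}\}$ with
\[B_{n,x}=\Big\{\bfy\in R^k:\sum_{p=1}^{s-1}\frac{1}{p!\,n^{(p-1)/2}}\sum_{i_1,\dots,i_p}l_{i_1,\dots,i_p}(y_{i_1}-\sqrt{n}\mu_{i_1})\cdots(y_{i_p}-\sqrt{n}\mu_{i_p})\le x\Big\}.\]
Its boundary is, to leading order in $n$, the hyperplane $\{\sum_i l_i(y_i-\sqrt{n}\mu_i)=x\}$ perturbed by smooth terms of order $n^{-1/2}$. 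Since $\sigma^2>0$ forces $(l_1,\dots,l_k)\neq 0$, the $\Phi_{0,\U}$-measure of the $\epsilon$-tube around this surface is $O(\epsilon)$ uniformly in $x$ and $n$, verifying~\eqref{boundary}; hypothesis~(A3) supplies the GPCC required by Corollary~\ref{co1.1}. Applying that corollary yields
\[\mathbb{P}(W_n'\le x)=\sum_{r=0}^{s-2}n^{-r/2}P_r(-\Phi_{0,\U}:\{\bm\chi_\bfv\})(B_{n,x})+o(n^{-(s-2)/2})\]
uniformly in $x$.

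It remains to identify the expansion above with $\Psi_{s,n}(x)$ modulo $o(n^{-(s-2)/2})$, via a Fourier-inversion / cumulant-matching argument. The Fourier--Stieltjes transform of the left-hand side (as a function of $x$) is obtained by pushing $\Phi_{0,\U}$ forward through the polynomial map defining $B_{n,x}$ and expanding in $n^{-1/2}$; this produces a polynomial in $n^{-1/2}$ whose coefficients are precisely the formal cumulants $\tilde\kappa_{j,n}$ of $W_n'$ expressed through $\{\bm\chi_\bfv\}$ and the derivatives $\{l_{i_1,\dots,i_p}\}$. By construction in \eqref{expansion}--\eqref{fourierexpansion}, this matches $\hat\psi_{s,n}(t)$ up to $o(n^{-(s-2)/2})$, so inverting gives equality of the two distribution functions to the required order. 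The main obstacle is the combinatorial book-keeping in this matching: contributions of a given order in $n^{-1/2}$ arise both from the multivariate Edgeworth terms $P_r$ and from the nonlinear structure of $B_{n,x}$, so collecting them into the univariate $\tilde\kappa_{j,n}$ requires a careful expansion of a Gaussian integral of a polynomial perturbed in $n^{-1/2}$. A secondary technical point is controlling~\eqref{boundary} uniformly in unbounded $x$, which is reconciled with the truncation in the first paragraph by effectively restricting to $|x|\lesssim\sqrt{\log n}$, outside which standard Gaussian tail bounds make both $\mathcal{Q}_n(x)$ and $\Psi_{s,n}(x)$ negligible.
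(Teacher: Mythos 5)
Your proposal follows essentially the same route as the paper's proof: both Taylor-expand $H$, apply Corollary~\ref{co1.1} to the level sets of the (expanded) statistic after verifying the boundary condition \eqref{boundary} by restricting to a ball of radius $O(\sqrt{\log n})$, and then identify the resulting multivariate expansion with the univariate $\Psi_{s,n}$ by matching moments/cumulants of $W_n'$ (the paper does this first for bounded $\Z_1$ and then by truncation and continuity of the coefficients). The only cosmetic difference is that the paper applies the corollary to $\{h_n(\mathbf{u})\in B\}$ for the exact $h_n$ and converts via Lemma~\ref{lemma: 2.1}, whereas you first pass to the Taylor polynomial's level sets; this does not change the substance of the argument.
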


	\begin{remark}	
		\cite{gotze1978asymptotic} focused on the case where $f$ is a smooth function, whereas our results demonstrate that $f$ can be a real-valued Borel-measurable function. Consequently, our research addresses a much broader class of functions. For instance, indicator functions on measurable sets fall within our function class but not the one considered in the work of \cite{gotze1978asymptotic}.
	\end{remark}

	\begin{remark}	
		A key challenge in the proof of Theorem \ref{main1} is to demonstrate that the difference between two distribution functions is sufficiently small by controlling the difference between their corresponding characteristic functions. This is achieved through a three-part argument. For small values of $t$, a Taylor expansion is used to bound the difference. For large values of $t$, the exponential decay of the characteristic function ensures that the difference becomes negligible. For intermediate values of $t$, the GPCC is utilized to guarantee the necessary decay.
	\end{remark}

	\begin{remark}\label{second-term}
		Note that $\Psi_{s,n}$ can be written that
		\[\Psi_{s,n}(x)=\Phi_{\sigma^2}(x)+\sum_{j=1}^{s-2}n^{-j/2}p_j(x)\phi_{\sigma^2}(x),\]
		where $p_j$ is a polynomial of degree not exceeding $3j-1$ whose coefficients do not depend on $n$. In fact, the coefficients are determined by the moments of $\Z_1$ of orders not greater than $j+2$ and the partial derivatives of $H$ at $\bmmu$.
		In particular, define $\mu_{i_1\dots i_j}=\mathbb{E}(Z_{1,i_1}-\mu_{i_1})\cdots(Z_{1,i_j}-{\mu}_{i_j})$ for $j\ge 1$. We can obtain the specific form of $\Psi_{s,n}(x)$. For illustration, we calculate the formula for the coefficients in the polynomials $p_1$ and $p_2$, which are
		\begin{align*}
			&p_1(x)=-\Big(A_1\sigma^{-1}+\frac16 A_2\sigma^{-3}(x^2-1)\Big),\\
			&p_2(x)=-x\bigg(\frac 12\big[B_2/\sigma^2+(B_1/\sigma)^2\big]+\frac{1}{24}\big[B_4/\sigma^4+4(B_1/\sigma) (B_3/\sigma^3)\big](x^2-3)+\frac{1}{72}\\
			&\qquad\qquad\qquad\times(B_3/\sigma^3)^2(x^4-10x^2+15)\bigg),
		\end{align*}
		where the expressions of $A_1, A_2, B_1, B_2, B_3$, and $B_4$ can be found in Appendix \ref{B.2}.
		In fact, $A_1, A_2, B_1, B_2, B_3$, and $B_4$ are expressed in terms of the higher-order derivatives of $H$ and the higher-order moments of $\Z$. 
	\end{remark}

	\begin{remark}\label{CPC}
		In Theorem \ref{main1},  we assume that $\Z_j$ are i.i.d. and  $\Z_1$  satisfy the GPCC. However, there are cases where $\Z_1$ does not satisfy the GPCC, but $\sum_{j=1}^b\Z_j$ does; or where $\Z_j$, for $j=1,\dots,n$, are not i.i.d., but the aggregated variable $\tilde\Z_i=\frac 1b (\Z_{b(i-1)+1}+\dots+\Z_{bi})$, for $i=1,\dots, n/b$ are i.i.d..  Here $b>1$ is an integer. 
		
		For example, suppose $w_i$ is a sequence of i.i.d.  Bernoulli distributed random variables, with each random variable taking the values $0$ and $1$ with equal probability $1/2$ . Then we can express $Z=\sum_{i=1}^\infty w_i/2^{2i-1}$, which is a singular continuous random variable and does not satisfy the Cram\'{e}r's condition. Assuming $Z'$  and $Z$ are i.i.d., we find that $Z+Z'/2$ is an absolutely continuous random variable following the uniform distribution $U(0,1)$. Therefore, it is straightforward that \eqref{maineq} holds for $\Z_1$ replaced by $\tilde \Z_1$ and $n$ replaced by $n/b$ in the assumptions of Theorem \ref{main1}.
	\end{remark}

	\subsection{A special case}
	In this subsection, we consider a random vector characterized by a special structure that is useful in statistics and present an easier way to verify the GPCC. We assume that 
	\[Z_{j1},\dots, Z_{jk}\]
	are generated from the same random variable $w_j$, so that 
	\[\Z_j=(w_j, K_1(w_j),\dots, K_{k-1}(w_j)),\]
	where $K_i(x)$, for $i=1,\dots,k-1$, are first-order differentiable functions. Additionally, we assume that $w_j$, for $j=1, \dots,n$ are i.i.d. and absolutely continuous.

	\begin{theorem}\label{main3}
		Suppose the assumptions $(A1)$ and $(A2)$ in Theorem \ref{main1} hold. If  
		\begin{equation*}J=\left | \begin{matrix}
				1  \quad&1\quad  & \dots & \quad1\\
				K'_1(w_1) \quad&K'_1(w_2)\quad & \dots &\quad K'_1(w_k) \\
				\vdots \quad& \vdots \quad& \cdots & \quad\vdots \\
				K'_{k-1}(w_1) \quad&K'_{k-1}(w_2)\quad & \dots &\quad K'_{k-1}(w_k) 
			\end{matrix} \right |\neq 0 
		\end{equation*}
		almost surely, then we have that
		\[\sup_{x}\lvert \mathcal{Q}_n(x)-\Psi_{s,n}(x)\rvert=o(n^{-(s-2)/2}).\]
		
	\end{theorem}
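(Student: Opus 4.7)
The plan is to reduce Theorem \ref{main3} to Theorem \ref{main1} via the aggregation device described in Remark \ref{CPC}. The obstruction to applying Theorem \ref{main1} directly is that $\Z_1 = (w_1, K_1(w_1), \dots, K_{k-1}(w_1))$ is supported on a one-dimensional curve in $R^k$: every component is a deterministic function of the single variable $w_1$, so conditioning on any proper subset of components either fixes the remaining coordinates entirely or leaves them only finitely-valued. Consequently $\Z_1$ itself cannot satisfy the GPCC for any choice of $a$, and the condition must instead be verified for a suitable $b$-fold aggregate.

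Take $b = k$ and set $\tilde\Z_1 = (1/k)(\Z_1 + \dots + \Z_k)$. The main step—and the principal obstacle—is to verify that $\tilde\Z_1$ satisfies the (full) Cram\'er condition, i.e., the GPCC with $a = k$. Consider the map $\phi : R^k \to R^k$ defined by
\[
\phi(u_1,\dots,u_k) = \frac{1}{k}\Bigl(\,\sum_{j=1}^k u_j,\ \sum_{j=1}^k K_1(u_j),\ \dots,\ \sum_{j=1}^k K_{k-1}(u_j)\Bigr),
\]
so that $\tilde\Z_1 = \phi(w_1,\dots,w_k)$. A direct computation gives $\det D\phi(w_1,\dots,w_k) = k^{-k} J$, which is nonzero almost surely by hypothesis. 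Hence $\phi$ is a local $C^1$-diffeomorphism on a set of full Lebesgue measure, and since $(w_1,\dots,w_k)$ is absolutely continuous on $R^k$ (by independence and absolute continuity of the $w_j$'s), a standard change-of-variables argument shows that the pushforward law of $\tilde\Z_1$ is absolutely continuous on $R^k$. The multivariate Riemann--Lebesgue lemma then gives $\lim_{\|\mathbf{t}\|\to\infty}|\mathbb{E} e^{i\langle \mathbf{t},\tilde\Z_1\rangle}| = 0$, which is strictly stronger than what is needed.

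Once the GPCC is verified for $\tilde\Z_1$, the remaining hypotheses of Theorem \ref{main1} carry over without effort: (A1) depends only on $H$ and $\bmmu$, and the $s$-th absolute moment of $\tilde\Z_1$ inherits from that of $\Z_1$ by Minkowski's inequality. Invoking Theorem \ref{main1} for the i.i.d.\ sequence of aggregates as described in Remark \ref{CPC} (with $b = k$ and effective sample size $n/k$) then yields \eqref{maineq}. The one bookkeeping point worth noting is that the polynomials $p_j$ appearing in $\Psi_{s,n}$ are unchanged by the regrouping: since $\bar\Z$ equals the average of the $\tilde\Z_i$, the statistic $W_n$, and hence the cumulants $\kappa_{j,n}$ and their approximations $\tilde\kappa_{j,n}$ that drive the expansion in \eqref{fourierexpansion}, are intrinsic to the data and do not depend on whether we view it as $n$ copies of $\Z_1$ or $n/k$ copies of $\tilde\Z_1$.
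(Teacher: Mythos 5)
Your proposal is correct and follows essentially the same route as the paper: aggregate $k$ i.i.d.\ copies, use the almost-sure nonvanishing of the Jacobian $J$ together with the change-of-variables formula to show the $k$-dimensional aggregate is absolutely continuous, invoke the Riemann--Lebesgue lemma, and then apply Theorem \ref{main1} through the regrouping device of Remark \ref{CPC} (with the moment condition transferred by Minkowski, as in the paper's Lemma \ref{moment}). The only cosmetic difference is that you verify the full Cram\'er condition ($a=k$) for the aggregate directly, whereas the paper conditions on $V_2,\dots,V_k$ and checks the partial condition ($a=1$); both rest on the same absolute-continuity fact.
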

	
	In the case where $k=2$, the following corollary provides a clearer condition, thereby simplifying the verification of the primary conclusion of Theorem \ref{main3}.  
	
	\begin{corollary}\label{co1}
		Suppose the assumptions $(A1)$ and $(A2)$ in Theorem \ref{main1} hold.
		If $k=2$ and $K_1(x)$ is a nonlinear first-order differentiable function, then we have:
		\[\sup_{x}\lvert \mathcal{Q}_n(x)-\Psi_{s,n}(x)\rvert=o(n^{-(s-2)/2}).\]
	\end{corollary}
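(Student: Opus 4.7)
The plan is to derive Corollary \ref{co1} as a direct specialization of Theorem \ref{main3} with $k=2$, and then reduce the abstract determinant hypothesis on $J$ to the concrete condition that $K_1$ is nonlinear and differentiable. In the $k=2$ setting, the Jacobian determinant collapses to
\[
J = \det\begin{pmatrix} 1 & 1 \\ K'_1(w_1) & K'_1(w_2) \end{pmatrix} = K'_1(w_2) - K'_1(w_1),
\]
so the hypothesis of Theorem \ref{main3} becomes $\mathbb{P}(K'_1(w_1) = K'_1(w_2)) = 0$. It therefore suffices to verify this equality and invoke Theorem \ref{main3} to conclude.

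Next, using the independence of $w_1$ and $w_2$ together with Fubini, I would write
\[
\mathbb{P}(K'_1(w_1) = K'_1(w_2)) = \mathbb{E}\bigl[\mathbb{P}(K'_1(w_1) = K'_1(w_2) \mid w_2)\bigr],
\]
which is zero as soon as the law of the real-valued random variable $K'_1(w)$ has no point masses. Thus the entire question reduces to showing that $K'_1(w)$ is a non-atomic random variable whenever $w$ is absolutely continuous and $K_1$ is a nonlinear, first-order differentiable function.

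To rule out atoms of $K'_1(w)$, I would argue as follows: if $K'_1(w)$ had a point mass at some $c \in R$, then the level set $E_c = \{x : K'_1(x) = c\}$ would necessarily carry positive mass under the density of $w$, and hence have positive Lebesgue measure on the support of $w$. Since the $E_c$ are pairwise disjoint, at most countably many of them can have positive Lebesgue measure. Invoking Darboux's theorem (the derivative of a differentiable function has the intermediate value property) together with the hypothesis that $K_1$ is genuinely nonlinear, so that $K'_1$ is not almost everywhere constant, one excludes such heavy level sets: on any neighborhood in which $K'_1$ varies, the preimage $\{K'_1 = c\}$ must be Lebesgue-negligible, and the nonlinearity rules out pathological scenarios where the law of $w$ concentrates entirely on an interval of constancy of $K'_1$.

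The main obstacle is carrying out this last real-analysis step cleanly in full generality; for the statistical applications emphasized in the paper (for instance $K_1(x) = x^2$ in the sample variance or sample covariance setting, where $K'_1$ is strictly monotone and the event $\{K'_1(w_1) = K'_1(w_2)\}$ is literally $\{w_1 = w_2\}$), the verification is immediate. Once the non-atomicity of $K'_1(w)$ is in hand, $J \neq 0$ almost surely follows, Theorem \ref{main3} applies, and we obtain $\sup_{x}|\mathcal{Q}_n(x) - \Psi_{s,n}(x)| = o(n^{-(s-2)/2})$ as claimed.
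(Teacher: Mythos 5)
Your overall route coincides with the one the paper intends: Corollary \ref{co1} is meant to be read off from Theorem \ref{main3} by noting that for $k=2$ the determinant collapses to $J=K_1'(w_2)-K_1'(w_1)$, so the hypothesis $|J|\neq 0$ almost surely becomes $\mathbb{P}\bigl(K_1'(w_1)=K_1'(w_2)\bigr)=0$. The paper itself supplies no further argument for this step (its proof of the related examples simply asserts that the condition of Theorem \ref{main3} holds for $x^2$ and $\log x$), and your conditioning/Fubini reduction to the non-atomicity of the law of $K_1'(w)$ is a correct and useful way of making the remaining obligation explicit.

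The gap is in the last step, and it is not merely a matter of ``carrying out the real-analysis cleanly'': the claim that nonlinearity plus first-order differentiability of $K_1$ forces $K_1'(w)$ to be non-atomic is false. Darboux's theorem constrains the range of $K_1'$, not the Lebesgue measure of its level sets. Concretely, take $w$ uniform on $[0,2]$ and $K_1$ a $C^1$ function that is affine on $[0,1]$ and strictly convex on $[1,2]$; or, for a case in which $K_1'$ is non-constant on every interval, let $K_1'$ be a continuous function vanishing exactly on a fat Cantor set of positive measure and let $K_1$ be its antiderivative. In either case $K_1$ is nonlinear and differentiable and the covariance of $(w,K_1(w))$ is nonsingular, yet $\mathbb{P}\bigl(K_1'(w_1)=K_1'(w_2)\bigr)>0$, so $J=0$ with positive probability and Theorem \ref{main3} does not apply. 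Your sentence ``on any neighborhood in which $K_1'$ varies, the preimage $\{K_1'=c\}$ must be Lebesgue-negligible'' is precisely the false assertion. To close the argument one needs a hypothesis stronger than ``nonlinear'' --- for instance that $K_1'$ is injective on the support of $w$ (which covers $x^2$ and $\log x$, as you observe), or that every level set of $K_1'$ is Lebesgue-null --- and it is worth flagging that the corollary as stated inherits the same defect, since the paper gives no argument for this step at all.
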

	
	\begin{remark}
		It is important to note that $|J|\neq 0$ (almost surely) is only a sufficient condition, not a necessary one. For instance, there exists a $K_j(x)=ax+b$ with $a\neq 0$, for which the Jacobian determinant $ J$ is equal to zero. Consider a two-dimensional random vector $(X, K_1(X))$. Although it dose not satisfy the conditions of Corollary \ref{co1}, it actually satisfies the GPCC.
	\end{remark}

	The proof of Theorems \ref{main3} is deferred to Section \ref{sec6}. In the remainder of this subsection, we present several examples for illustration, which provide valuable insights into the practical implications of Theorem \ref{main3}. 
	\begin{example}\label{square}
		It is well known that the sample variance can be expressed as a function of $(w, w^2)$. Regardless of whether $w$ or $w^2$ is given, the conditional characteristic function does not satisfy Cram\'{e}r's condition. Our theorem provides an alternative validity condition for the Edgeworth expansion. Specifically, if $w$ has an absolutely continuous component, then the Edgeworth expansion of sample variance is valid.
	\end{example}
	
	\begin{example}\label{Ex1}
		Consider the case of $k=2$ where the random vector $\Z$ has a special structure. Suppose that $\Z=(w, \log w)$ and that $w$ has an absolutely continuous component. Under these circumstances, our Theorem \ref{main3} applies, yielding a valid Edgeworth expansion for the distribution of $H(\bar \Z)$.
	\end{example}
	
	\begin{example}\label{logdata}
		We showcase a practical application of Example \ref{Ex1} through point estimation. For the mean of the log-normal distribution, the maximum likelihood estimate, given by $\frac 1n\sum_{i=1}^n\log w_i$, can be expressed as $H(\bar \Z)$. Here, $\Z_i=(w_i,\log w_i)$, where $w_i$ represents an i.i.d.  random variable following a log-normal distribution.
	\end{example}

	\section{Applications and numerical examples}\label{sec5}
	In this section, we apply the theoretical results from Section \ref{mrs}, focusing on the expansions of the sample correlation coefficient, the ratio of samples, and the Z-score test statistic.

	\subsection{A valid Edgeworth expansion of Pearson's correlation coefficient}
	In this section, we present the validity of the formal Edgeworth expansion of Pearson's correlation coefficient between two random variables under the GPCC, with particular attention to the case where one variable is continuous and the other is discrete. Previous research includes \cite{babu1989edgeworth}, which provided first-order Edgeworth expansion results for the correlation coefficient of two-dimensional random variables $(X, Y)$, where $X$ is continuous and $Y$ is lattice. Additionally, \cite{ogasawara2006asymptotic} derived the second-order expansion of the sample correlation coefficient under Cram\'{e}r's condition and used simulations to confirm the accuracy of the second-order expansion. 
	
	Consider a sequence of i.i.d.  random two-dimensional vectors ${\bf Y}_n=(Y_{n1},Y_{n2}), {n\ge 1}$. Let $f_1,\dots, f_5$ be real-valued Borel measurable functions on $R^2$. Assume
	\[\Z_i=(f_1({\bf Y}_i), f_2({\bf Y}_i),\dots, f_5({\bf Y}_i)),\]
	with
	\[\bar \Z=\frac 1n \sum_{i=1}^n \Z_i=\bigg(\frac 1n \sum_{i=1}^n f_1({\bf Y}_i),\frac 1n \sum_{i=1}^n f_2({\bf Y}_i),\dots, \frac 1n \sum_{i=1}^n f_5({\bf Y}_i)\bigg),\]
	where
	\[f_1({\bf Y}_n)=Y_{n1}, f_2({\bf Y}_n)=Y_{n2}, f_3({\bf Y}_n)=Y_{n1}^2, f_4({\bf Y}_n)=Y_{n2}^2, f_5({\bf Y}_n)=Y_{n1}Y_{n2}.\] 
	Let \[\bmmu=(\mathbb E Y_{11},\mathbb E Y_{12}, \mathbb E Y_{11}^2, \mathbb E Y_{12}^2,\mathbb E Y_{11}Y_{12}),\]
	and define
	\[H(\textbf{z})=(z_5-z_{1}z_{2})(z_{3}-z_{1}^2)^{-1/2}(z_{4}-z_{2}^2)^{-1/2},\quad \text{for}~ \textbf{z}=(z_{1},\dots,z_{5}).\]
	Then the Pearson's population correlation coefficient of $Y_{11}$ and $Y_{12}$ can be expressed as $\rho=H(\bmmu)$. Pearson's sample correlation coefficient is:
	\begin{align*}
		H(\bar \Z)&=\frac{\frac 1n \sum_{i=1}^n  f_5({\bf Y}_i)-(\frac 1n \sum_{i=1}^n  f_1({\bf Y}_i))(\frac 1n \sum_{i=1}^n  f_2({\bf Y}_i))}{[\frac 1n \sum_{i=1}^n  f_3({\bf Y}_i)-(\frac 1n \sum_{i=1}^n  f_1({\bf Y}_i))^2]^{\frac 12}[\frac 1n \sum_{i=1}^n  f_4({\bf Y}_i)-(\frac 1n \sum_{i=1}^n  f_2({\bf Y}_i))^2]^{\frac 12}},\\[3mm]
		&=\frac{\frac 1n \sum_{i=1}^n Y_{i1}Y_{i2}-(\frac 1n \sum_{i=1}^n Y_{i1})(\frac 1n\sum_{i=1}^nY_{i2})}{\big[\frac 1n\sum_{i=1}^n Y_{i1}^2-(\frac 1n \sum_{i=1}^n Y_{i1})^2\big]^{\frac 12}\big[\frac 1n\sum_{i=1}^n Y_{i2}^2-(\frac 1n \sum_{i=1}^n Y_{i2})^2\big]^{\frac 12}},\\[3mm]
		&\overset{\Delta}{=}\hat\rho.
	\end{align*}
	
	By Theorem \ref{main1}, we have the following theorem, which establishes the validity of the formal Edgeworth expansion of the sample correlation coefficient under GPCC.
	
	\begin{theorem}
		Assume the following:
		\begin{itemize}
			\item[(A1)]
			
			$H$ is $s$ times continuously differentiable in a neighborhood of $\bmmu$,   where $s\ge 3$ is an integer.
			\item[(A2)]
			${\bf Y}_{1}$ has finite $s$-th absolute moments.
			\item[(A3)] $\Z_1$ satisfies the GPCC.
		\end{itemize}
		Then we have that 
		\[ P(n^{1/2}(\hat\rho-\rho)\le x)=\Phi_{\sigma^2}(x)+\sum_{j=1}^{s-2}n^{-j/2}p_j(x/\sigma)\phi_{\sigma^2}(x)+o(n^{-(s-2)/2}),\]
		where $p_j$ is a polynomial of degree not exceeding $3j-1$ whose coefficients do not depend on $n$. In fact, the coefficients are determined by the cumulants of $\Z_1$ of orders not greater than $j+2$ and the partial derivatives of $H$ at $\bm{\mu}$. 
	\end{theorem}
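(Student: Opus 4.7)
The plan is to obtain this result as a direct specialization of Theorem \ref{main1} to the choice $k=5$, $\mathbf{Z}_i=(Y_{i1},Y_{i2},Y_{i1}^2,Y_{i2}^2,Y_{i1}Y_{i2})$, and $H(\mathbf{z})=(z_5-z_1z_2)(z_3-z_1^2)^{-1/2}(z_4-z_2^2)^{-1/2}$. First I will verify, one by one, that the three hypotheses of Theorem \ref{main1} follow from the assumptions (A1)--(A3) of the present statement. For (A1), the function $H$ is real-analytic on the open set $\{\mathbf{z}\in R^5 : z_3>z_1^2,\ z_4>z_2^2\}$, which contains $\bmmu$ as soon as both $Y_{11}$ and $Y_{12}$ have positive variance (a tacit requirement for $\rho$ to be defined); hence partial derivatives of $H$ of every order are continuous in a neighborhood of $\bmmu$. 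For the moment condition, each component of $\mathbf{Z}_1$ is a polynomial of degree at most two in the components of $\mathbf{Y}_1$, so the finiteness of the relevant absolute moment of $\mathbf{Y}_1$ transfers, via the inequality $\|\mathbf{Z}_1\|^s \le C(1+\|\mathbf{Y}_1\|^{2s})$, to the $s$-th absolute moment of $\mathbf{Z}_1$ required by Theorem \ref{main1}. Condition (A3) is the GPCC hypothesis verbatim.

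Once the three hypotheses are checked, Theorem \ref{main1} immediately yields
\[\sup_{x}\bigl|P(n^{1/2}(\hat\rho-\rho)\le x)-\Psi_{s,n}(x)\bigr|=o(n^{-(s-2)/2}).\]
The structural description of $\Psi_{s,n}$ given in Remark \ref{second-term} then delivers the announced form $\Psi_{s,n}(x)=\Phi_{\sigma^2}(x)+\sum_{j=1}^{s-2}n^{-j/2}p_j(x/\sigma)\phi_{\sigma^2}(x)$, with $\sigma^2=\sum_{i,j=1}^5 \sigma_{ij}l_il_j$ for $l_i=(D_iH)(\bmmu)$, and each $p_j$ a polynomial of degree at most $3j-1$ whose coefficients are determined by the cumulants of $\mathbf{Z}_1$ of orders $\le j+2$ and the partial derivatives of $H$ at $\bmmu$ of orders $\le j+1$. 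The $x/\sigma$ scaling arises from the standard rewriting in which $\pi_r(-d/dx)$ in the Fourier-side expansion \eqref{fourierexpansion} is expressed in terms of Hermite-type polynomials after normalization by $\sigma$; this is a routine bookkeeping step and does not alter the order of the remainder.

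The only point in this proof that carries real content, and the reason for stating the theorem separately, lies entirely in assumption (A3). For Pearson's correlation the basic vector $\mathbf{Z}_1$ mixes each component with its square and with the cross product, so as noted in Example \ref{5-dimension} classical Cram\'er's condition fails whenever either marginal is discrete, and partial Cram\'er's condition fails in general because conditioning on four coordinates of $\mathbf{Z}_1$ leaves the remaining coordinate with at most finitely many possible values. The GPCC handles exactly this case by allowing the conditioning set $\{Z_{j(a+1)},\dots,Z_{jk}\}$ to be carefully chosen; for instance, if $Y_{11}$ has an absolutely continuous component and $Y_{12}$ is a nondegenerate discrete variable independent of $Y_{11}$, conditioning on $(Y_{12},Y_{12}^2)$ (after permuting coordinates) leaves $(Y_{11},Y_{11}^2,Y_{11}Y_{12})$ whose conditional characteristic function decays. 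Thus the main obstacle is not in the proof itself, which is a direct reduction to Theorem \ref{main1}, but in having introduced the GPCC as a workable smoothness criterion for vectors built from continuous and discrete components together with their products---a hurdle already cleared in Section \ref{sec2}.
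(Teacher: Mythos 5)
Your proposal is correct and follows exactly the paper's route: the paper derives this theorem as an immediate specialization of Theorem \ref{main1} to $k=5$, $\Z_i=(Y_{i1},Y_{i2},Y_{i1}^2,Y_{i2}^2,Y_{i1}Y_{i2})$ and $H(\mathbf{z})=(z_5-z_1z_2)(z_3-z_1^2)^{-1/2}(z_4-z_2^2)^{-1/2}$, with the polynomial structure of $\Psi_{s,n}$ read off from Remark \ref{second-term}. Your explicit verification that the moment condition on $\mathbf{Y}_1$ must transfer to $\Z_1$ (which in fact requires control of moments of $\mathbf{Y}_1$ up to order $2s$, since $\Z_1$ contains quadratic terms) is slightly more careful than the paper's one-line reduction, but the argument is the same.
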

	\begin{remark}
		Our results confirm the validity of the formal Edgeworth expansion for sample correlation coefficients, not only for two continuous random variables, but also for correlation coefficients involving a continuous and a discrete random variable. For the expansion of the correlation coefficient between two discrete random variables, we hypothesize that additional Edgeworth expansion formulas may be necessary. 
	\end{remark}
	\begin{corollary}\label{correlation} Adopting the above theorem, the first-order Edgeworth expansion of $n^{1/2}(\hat\rho-\rho)$ is given by
		\[ P\big(n^{\frac 12}(\hat\rho-\rho)\le x\big)=\Phi_{\sigma^2}(x)-n^{-\frac 12}\Big(A_3\sigma^{-1}+\frac16 A_4\sigma^{-3}(x^2\sigma^{-2}-1)\Big)\phi_{\sigma^2}(x)+o(n^{-\frac 12}),\]
		valid uniformly in $x$. The specific expressions of $A_3$ and $A_4$ can be found in Appendix \ref{B.2}.
	\end{corollary}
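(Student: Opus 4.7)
The plan is to specialize the preceding theorem to $s=3$. Applying that theorem yields
\[P\bigl(n^{1/2}(\hat\rho-\rho)\le x\bigr)=\Phi_{\sigma^2}(x)+n^{-1/2}p_1(x/\sigma)\phi_{\sigma^2}(x)+o(n^{-1/2}),\]
uniformly in $x$, provided $H$ is three times continuously differentiable at $\bmmu$ (which holds, since $z_3-z_1^2$ and $z_4-z_2^2$ are strictly positive at $\bmmu=\mathbb{E}\Z_1$ whenever $\Var Y_{11},\Var Y_{12}>0$), and provided ${\bf Y}_1$ has finite third absolute moment and $\Z_1$ satisfies the GPCC --- all of which are inherited from the hypotheses of the theorem.

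The next step is to invoke the explicit formula for $p_1$ recorded in Remark \ref{second-term}, namely
\[p_1(x)=-\Bigl(A_1\sigma^{-1}+\tfrac{1}{6}A_2\sigma^{-3}(x^2-1)\Bigr),\]
with $A_1$ and $A_2$ given in Appendix \ref{B.2} as fixed linear combinations of the cumulants of $\Z_1$ of order at most three and of the partial derivatives $l_i,\,l_{ij},\,l_{ijk}$ of $H$ at $\bmmu$. Substituting $x/\sigma$ for $x$ in $p_1$ produces the $(x^2\sigma^{-2}-1)$ factor displayed in the corollary. It therefore remains only to rewrite $A_1, A_2$ in the specific form dictated by Pearson's $H$ and by
\[\Z_1=(Y_{11},Y_{12},Y_{11}^2,Y_{12}^2,Y_{11}Y_{12}),\]
and to relabel the resulting quantities as $A_3$ and $A_4$, whose closed-form expressions are deferred to Appendix \ref{B.2}.

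The main obstacle is purely the bookkeeping of this specialization. The partial derivatives of
\[H(\z)=(z_5-z_1z_2)(z_3-z_1^2)^{-1/2}(z_4-z_2^2)^{-1/2}\]
at $\bmmu$ are unwieldy; each $l_{ij}$ and $l_{ijk}$ involves products of half-powers of $\sigma_{11}=\Var Y_{11}$ and $\sigma_{22}=\Var Y_{12}$ together with $\rho$ and its powers. Combining these with the centered moments $\mu_{i_1\cdots i_j}=\mathbb{E}\prod_r(Z_{1,i_r}-\mu_{i_r})$ that enter $A_1$ and $A_2$ gives finite but long sums. No new analytic idea is required; the expansion, Cram\'er-type decay (via GPCC), and moment assumptions have already been harnessed in the theorem, so the corollary reduces to an algebraic identification of coefficients, carried out in the appendix.
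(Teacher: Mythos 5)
Your proposal is correct and matches the paper's (implicit) argument exactly: the corollary is obtained by taking $s=3$ in the preceding theorem, inserting the explicit formula for $p_1$ from Remark \ref{second-term} evaluated at $x/\sigma$ (which yields the $(x^2\sigma^{-2}-1)$ factor), and specializing $A_1,A_2$ to the centered moments $\tilde\mu$ of $\Z_1=(Y_{11},Y_{12},Y_{11}^2,Y_{12}^2,Y_{11}Y_{12})$ and the derivatives of the Pearson $H$, which is precisely how $A_3,A_4$ are defined in Appendix \ref{B.2}. Your added observation that smoothness of $H$ at $\bmmu$ follows from the positivity of the variances is a harmless (and correct) elaboration of assumption (A1).
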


	\subsection{A valid Edgeworth expansion for the ratio of sample means}
	An important example of a function of sample means with a counting component is the ratio estimator used in survival analysis, such as the ratio of the proportion of individuals dying in a given period to the average lifetime. \cite{babu1989edgeworth} presents first-order Edgeworth expansion results for the single ratio case, while \cite{bai1992note} extends these results, providing further insights into the statistical properties of such estimators. 
	
	In practice, outcomes can be influenced by multiple factors. For instance, the number of plants that die can depend on drug dosage, environmental conditions, and genetic variability. To account for multiple influences, the Edgeworth expansion of the statistic for the ratio of multiple sample means can be utilized. In this subsection, we apply our theorem to the multivariate ratio case. Suppose \[\{(X_{1i}, Y_{1i}), (X_{2i},Y_{2i}), \dots, (X_{ki},Y_{ki}), i=1,2,\dots n\}\] is a sequence of i.i.d random vectors with finite $s$-th moment ($s\ge 3$). Define
	\[R_j=\frac{\sum_{i=1}^nX_{ji}}{\sum_{i=1}^nY_{ji}},\quad (j=1,\dots, k),\quad W_n=R_1^2+\dots+R_k^2.\]
	Assume $\Z_i=(X_{1i}, \dots,X_{ki}, Y_{1i},\dots, Y_{ki})$, then we can rewrite $W_n$ as
	\[W_n=H(\bar Z), \quad H(x_1,\dots, x_{2k})=\Big(\frac{x_1}{x_{k+1}}\Big)^2+\dots+\Big(\frac{x_k}{x_{2k}}\Big)^2.\]
	Besides, denote the partial derivatives of $H$ at $\bm\mu$ by 
	\[l_{i_1,\dots,i_p}=(D_{i_1}D_{i_2}\cdots D_{i_p}H)(\bm\mu),~ 1\le i_1,\dots,i_p\le k.\]
	Assume  $\bmmu=\mathbb{E}\Z_1=(\mu_1,\mu_2,\dots,\mu_{2k}) $ and  $\sigma^2=\sum_{i,j=1}^k \sigma_{ij}l_il_j$. By Theorem \ref{main1}, we have the following theorem:
	
	\begin{theorem}
		Assume the following:
		\begin{itemize}
			\item[(A1)]			
			$H$ is $s$ times continuously differentiable in a neighborhood of $\bmmu$,   where $s\ge 3$ is an integer.
			\item[(A2)]
			${\bf Y}_{1}$ has finite $s$-th absolute moments.
			\item[(A3)] $\Z_1$ satisfies the GPCC.
		\end{itemize}
		Then we have that 
		\[ P\Big(n^{1/2}(H(\bar Z)-H(\mu))\le x\Big)=\Phi_{\sigma^2}(x)+\sum_{j=1}^{s-2}n^{-j/2}p_j(x/\sigma)\phi_{\sigma^2}(x)+o(n^{-(s-2)/2}),\]
		where $p_j$ is a polynomial of degree not exceeding $3j-1$ whose coefficients do not depend on $n$. In fact, the coefficients are determined by the cumulants of $\Z_1$ of orders not greater than $j+2$ and the partial derivatives of $H$ at $\bm{\mu}$. 
	\end{theorem}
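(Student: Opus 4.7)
The plan is to verify that the ratio-of-means setup fits exactly into the hypotheses of Theorem \ref{main1} and then to extract the stated form of the expansion. Set $\Z_i = (X_{1i},\dots,X_{ki},Y_{1i},\dots,Y_{ki}) \in R^{2k}$ with $\bmmu = \mathbb E \Z_1$, so that $W_n = H(\bar\Z)$ for the explicit rational map $H(x_1,\dots,x_{2k}) = \sum_{j=1}^k (x_j/x_{k+j})^2$. This $H$ is infinitely differentiable on the open set where none of the last $k$ coordinates vanish; assumption (A1) therefore implicitly requires $\mathbb E Y_{j1} \neq 0$ for each $j$, and the partial derivatives $l_{i_1,\dots,i_p} = (D_{i_1}\cdots D_{i_p} H)(\bmmu)$ are explicit rational functions of $\bmmu$, for example $l_j = 2\mu_j/\mu_{k+j}^2$ and $l_{k+j} = -2\mu_j^2/\mu_{k+j}^3$ for $j=1,\dots,k$. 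The moment hypothesis (A2), together with the i.i.d.\ setup that ensures $\Z_1$ itself has a finite $s$-th absolute moment, and the smoothness hypothesis (A3) (GPCC for $\Z_1$) are precisely the hypotheses required by Theorem \ref{main1}.

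With the correspondence in place, I would invoke Theorem \ref{main1} applied to $(\Z_i)_{i\ge 1}$ and $H$. This yields
\[
\sup_{x} \bigl| P(n^{1/2}(H(\bar\Z)-H(\bmmu))\le x) - \Psi_{s,n}(x) \bigr| = o(n^{-(s-2)/2}),
\]
where $\Psi_{s,n}$ is the distribution function whose Fourier--Stieltjes transform is the polynomial-in-$n^{-1/2}$ expression \eqref{fourierexpansion} built from the Taylor surrogate $W_n'$ in \eqref{Taylor expansion}. To translate this into the form claimed in the theorem, I would apply Remark \ref{second-term}, which expresses
\[
\Psi_{s,n}(x) = \Phi_{\sigma^2}(x) + \sum_{j=1}^{s-2} n^{-j/2}\,p_j(x/\sigma)\,\phi_{\sigma^2}(x),
\]
with $\sigma^2 = \sum_{i,j=1}^{2k} \sigma_{ij}\, l_i l_j$. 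The degree bound $\deg p_j \le 3j-1$ and the fact that the coefficients of $p_j$ depend only on cumulants of $\Z_1$ of order $\le j+2$ and on partial derivatives of $H$ at $\bmmu$ of order $\le j+1$ follow from the cumulant identity $\kappa_{j,n} = \tilde\kappa_{j,n} + o(n^{-(s-2)/2})$ preceding \eqref{expansion} and the formal generating-function identity that defines the $p_j$; for the present $H$, all required derivatives and moments are finite and explicitly computable as rational functions of $(\mu_1,\dots,\mu_{2k})$.

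The substantive obstacle is not the bookkeeping above but the verifiability of hypothesis (A3) in concrete survival-analysis settings: when the numerators and denominators share deterministic structure (for instance, $X_{ji}$ is a functional of $Y_{ji}$, or several coordinates are lattice-valued counts), both the classical Cram\'{e}r condition \eqref{CCC} and the partial Cram\'{e}r condition \eqref{PCC} typically fail. One would then check GPCC either via Theorem \ref{main3} (when a subblock of $\Z_1$ is a first-order differentiable transform of a single absolutely continuous variate, so that the Jacobian condition applies) or by direct analysis of the conditional characteristic function of a suitable block of $\Z_1$ in Definition \ref{DefGPCC}. In the statement as worded here GPCC is assumed, so this difficulty is externalized; the proof proper reduces to the identification above together with a single invocation of Theorem \ref{main1}.
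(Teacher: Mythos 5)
Your proposal is correct and follows essentially the same route as the paper, which establishes this theorem simply by applying Theorem \ref{main1} to the vector $\Z_i=(X_{1i},\dots,X_{ki},Y_{1i},\dots,Y_{ki})$ and the rational map $H$, with the polynomial form of $\Psi_{s,n}$ read off from Remark \ref{second-term}. Your additional observations (the implicit requirement $\mathbb{E}Y_{j1}\neq 0$ for smoothness of $H$ at $\bmmu$, and the remark that GPCC is assumed rather than verified here) are accurate refinements of the same argument.
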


	\subsection{A valid Edgeworth expansion of the Z-score test statistic}
	The log-normal distribution is widely observed in various fields, including finance, medicine, and environmental science. The Z-score test statistic, proposed by \cite{zhou1997methods}, is designed to compare the means of two log-normal outcomes using log-transformed data. In this subsection, we present the statistical application of Corollary \ref{co1}, focusing on the Edgeworth expansion for the distribution function of the Z-score test statistic. Assume that
	\[\log X_{i}\sim N(\mu_1,\sigma_1^2), \quad \log Y_{i}\sim N(\mu_2,\sigma_2^2).\] 
	The null  hypothesis is
	\[H_0:M_1=M_2,\]
	where $M_1$ and $M_2$ are $X_i$ and $Y_i$ corresponding means respectively. Define
	\begin{align*}
		&\hat \mu_1=\frac{1}{n}\sum_{i=1}^n \log X_{i}, \quad S_1^2=\frac{1}{n_1-1}\sum_{i=1}^{n_1}(\log X_i-\hat \mu_1)^2,\\
		&\hat \mu_2=\frac{1}{n}\sum_{i=1}^n \log Y_{i},\quad S_2^2=\frac{1}{n_2-1}\sum_{i=1}^{n_2}(\log Y_i-\hat \mu_2)^2.
	\end{align*}
	The test statistic proposed is
	\[W_n=\frac{\hat\mu_2-\hat\mu_1+(1/2)(S_2^2-S_1^2)}{\sqrt{\frac{S_1^2}{n_1}+\frac{S_2^2}{n_2}+(1/2)\Big(\frac{S_1^4}{n_1-1}+\frac{S_2^4}{n_2-1}\Big)}}.\]
	Let $a=n_1/n_2$ and set
	\[\Z_i=(\log(X_{i}), \log^2(X_{i}), \log(Y_{i}), \log^2(Y_{i})),\]
	so that we can express $W_n$ in the form 
	\[W_n=\sqrt n_2 H(\bar Z),\]
	where
	\begin{align}
		\label{H2}&H(x_1,x_2,x_3,x_4)=\frac{x_3-x_1-\frac{1}{2}(x_2-x_1^2)+\frac{1}{2}(x_4-x_3^2)}{\sqrt{a(x_2-x_1^2)+(x_4-x_3^2)+\frac{1}{2}a(x_2-x_1^2)^2+\frac{1}{2}(x_4-x_3^2)^2}}.
	\end{align}
	In the case where $n_1\neq n_2$, $W_n$ can be represented in the form of \eqref{H2}. Besides, denote the partial derivatives of $H$ at $\bm\mu$ by 
	\[l_{i_1,\dots,i_p}=(D_{i_1}D_{i_2}\cdots D_{i_p}H)(\bm\mu),~ 1\le i_1,\dots,i_p\le 4.\]
	Assume  $\bmmu=\mathbb{E}\Z_1=(\mu_1,\mu_2,\mu_3,\mu_{4}) $ and  $\sigma^2=\sum_{i,j=1}^4 \sigma_{ij}l_il_j$. By Theorem \ref{main1}, we have the following theorem:
	
	\begin{theorem}
		Assume the following:
		\begin{itemize}
			\item[(A1)]	
			$H$ is $s$ times continuously differentiable in a neighborhood of $\bmmu$,   where $s\ge 3$ is an integer.
			\item[(A2)]
			${\bf Y}_{1}$ has finite $s$-th absolute moments.
			\item[(A3)] $\Z_1$ satisfies the GPCC.
		\end{itemize}
		Then we have that 
		\[ P\Big(n^{1/2}(H(\bar Z)-H(\mu))\le x\Big)=\Phi_{\sigma^2}(x)+\sum_{j=1}^{s-2}n^{-j/2}p_j(x/\sigma)\phi_{\sigma^2}(x)+o(n^{-(s-2)/2}),\]
		where $p_j$ is a polynomial of degree not exceeding $3j-1$ whose coefficients do not depend on $n$. In fact, the coefficients are determined by the cumulants of $\Z_1$ of orders not greater than $j+2$ and the partial derivatives of $H$ at $\bm{\mu}$. 
	\end{theorem}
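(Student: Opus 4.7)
The assumptions (A1)--(A3) here are verbatim those of Theorem \ref{main1}, so the proof reduces to matching the Z-score statistic to the framework of that theorem, verifying the hypotheses in the specific log-normal setting, and then invoking Theorem \ref{main1}. Concretely, I would set $n=n_2$, $k=4$, and $\Z_i=(\log X_i,\log^2 X_i,\log Y_i,\log^2 Y_i)$, so that the ratio $a=n_1/n_2$ enters only as a fixed scalar inside the function $H$ defined by \eqref{H2}. With these identifications, $n^{1/2}(H(\bar\Z)-H(\bmmu))$ is exactly the quantity $W_n$ in \eqref{Wn}, and $\sigma^2=\sum_{i,j=1}^{4}\sigma_{ij}l_{i}l_{j}$ agrees with the variance appearing in the claimed expansion.

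For (A1) I would compute $\bmmu=(\mu_1,\mu_1^2+\sigma_1^2,\mu_2,\mu_2^2+\sigma_2^2)$ and observe that at this point the quantities $x_2-x_1^2$ and $x_4-x_3^2$ equal $\sigma_1^2$ and $\sigma_2^2$, so the radicand inside the denominator of $H$ evaluates to $a\sigma_1^2+\sigma_2^2+\tfrac12 a\sigma_1^4+\tfrac12\sigma_2^4>0$. Both that radicand and the inner quantities remain strictly positive in an open neighborhood of $\bmmu$, so $H$ is $C^\infty$ (and in particular $C^s$) there. Assumption (A2) holds because $\log X_1$ and $\log Y_1$, being normal, possess absolute moments of every order, and the components of $\Z_1$ are just these variables together with their squares.

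The most substantive step is the verification of (A3). I would exploit the independence of the $X$ and $Y$ samples and take $a=2$ in Definition \ref{DefGPCC}, conditioning on $(\log Y_1,\log^2 Y_1)$; by independence, the conditional characteristic function of $(\log X_1,\log^2 X_1)$ given these coordinates reduces to its unconditional counterpart. The pair $(\log X_1,\log^2 X_1)$ then matches the special structure of Theorem \ref{main3} with underlying absolutely continuous variable $w=\log X_1$ and $K_1(w)=w^2$, and the relevant Jacobian determinant $\det\begin{pmatrix}1 & 1\\ 2w_1 & 2w_2\end{pmatrix}=2(w_2-w_1)$ is nonzero almost surely. Hence (via the Jacobian argument underlying Theorem \ref{main3}; cf.\ also Example \ref{square}) the conditional characteristic function decays sufficiently fast as $\|(t_1,t_2)\|\to\infty$, and the GPCC is satisfied.

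With (A1)--(A3) verified in this concrete setting, Theorem \ref{main1} immediately yields the claimed Edgeworth expansion, together with the structural information (cf.\ Remark \ref{second-term}) that each $p_j$ is a polynomial of degree at most $3j-1$ whose coefficients depend only on cumulants of $\Z_1$ of order $\le j+2$ and on partial derivatives of $H$ at $\bmmu$. The main subtlety in this pipeline is the choice of components to condition on in (A3): conditioning on any three coordinates of $\Z_1$, or on any pair such as $(\log^2 X_1,\log^2 Y_1)$ alone, yields a degenerate conditional distribution, so the independence between the two samples is essential, and the natural choice $a=2$ with conditioning on the $Y$-components (or symmetrically on the $X$-components) is what makes the verification go through.
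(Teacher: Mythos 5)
Your proposal is correct and follows essentially the same route as the paper: the theorem is a direct instance of Theorem \ref{main1} once the Z-score statistic is written as $\sqrt{n_2}\,H(\bar\Z)$ with $H$ as in \eqref{H2} and $\Z_i=(\log X_i,\log^2 X_i,\log Y_i,\log^2 Y_i)$. Your additional verification of (A1)--(A3) in the concrete log-normal setting (in particular the GPCC check with $a=2$, conditioning on the independent $Y$-components and using the Jacobian argument behind Theorem \ref{main3}) goes beyond what the paper records, since the paper takes (A1)--(A3) as hypotheses and simply invokes Theorem \ref{main1}; this extra work is sound and consistent with how the paper itself verifies the GPCC in its simulation section.
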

	
	\subsection{Simulation experiments for correlation}\label{simulation}
	In this section, we use numerical experiments to evaluate the performance of the Edgeworth expansion of the sample correlation. We present the results of the first-order and second-order Edgeworth expansions. For comparison, we also present the results of the normal approximation. 
	
	\textbf{Experiment 1 (Continuous and continuous random variables )} In this experiment, we generate two independent and identically distributed continuous random variables $X$ and $Y$, each following $\chi^2(1)$ distribution. Define
	\[\Z=(X, Y,X^2,Y^2,XY),\]
	and let $\Z_i=(X_{i},Y_{i},X_{i}^2, Y_{i}^2,X_{i}Y_{i})$ for $i=1,\dots, n$. Suppose that 
	\begin{equation*}
		\setlength{\arraycolsep}{4pt}
		\mu=(1,1,3,3,1),\quad\Sigma=\begin{pmatrix}
			2&0& 12& 0& 2\\
			0&2& 0& 12& 2\\
			12& 0& 96& 0& 12\\
			0&12& 0& 96& 12\\
			2&2& 12& 12& 8
		\end{pmatrix}.
	\end{equation*}
	Based on our previous theorem, we can find that $Z_i$ satisfy the GPCC. Specifically, 
	\[\limsup _{\|\mathbf{ t}\|\to\infty}\mathbb{E}\Big|\mathbb{E}\Big(\exp[i (t_{1}Y_{i}+t_{2}Y_{i}^2+t_{3}X_{i}Y_{i})]\Big| X_{i},X_{i}^2\Big)\Big|<1.\]
	Therefore, the sample correlation of $X$ and $Y$ can be expanded using Corollary \ref{correlation}.

	\begin{figure}
		\begin{subfigure}{0.5\textwidth}
			\centering
			\includegraphics[width=\linewidth]{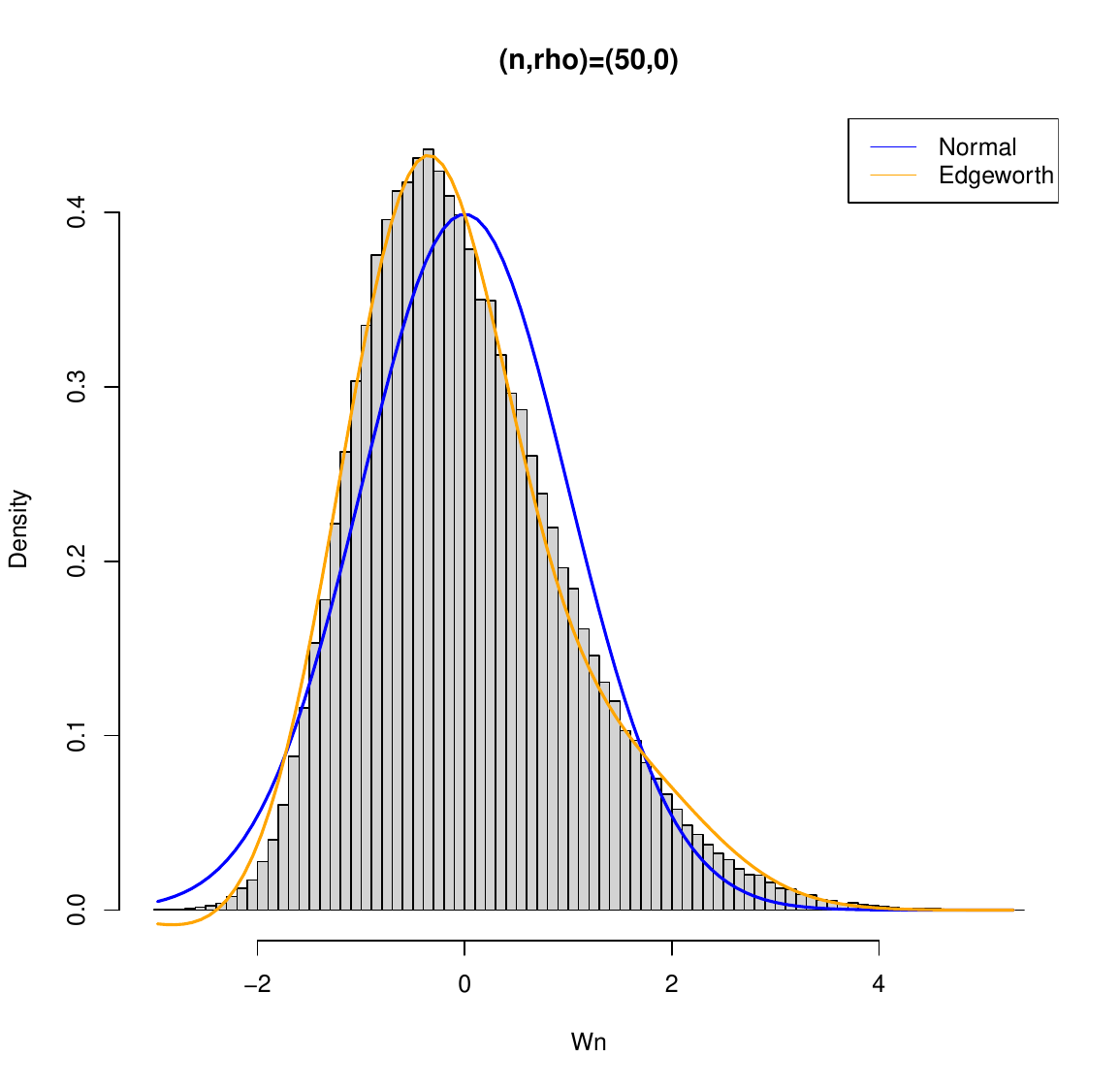}
			\caption{Chisq \& Chisq at $n=50$}
			\label{fig:image1}
		\end{subfigure}%
		\begin{subfigure}{0.5\textwidth}
			\centering
			\includegraphics[width=\linewidth]{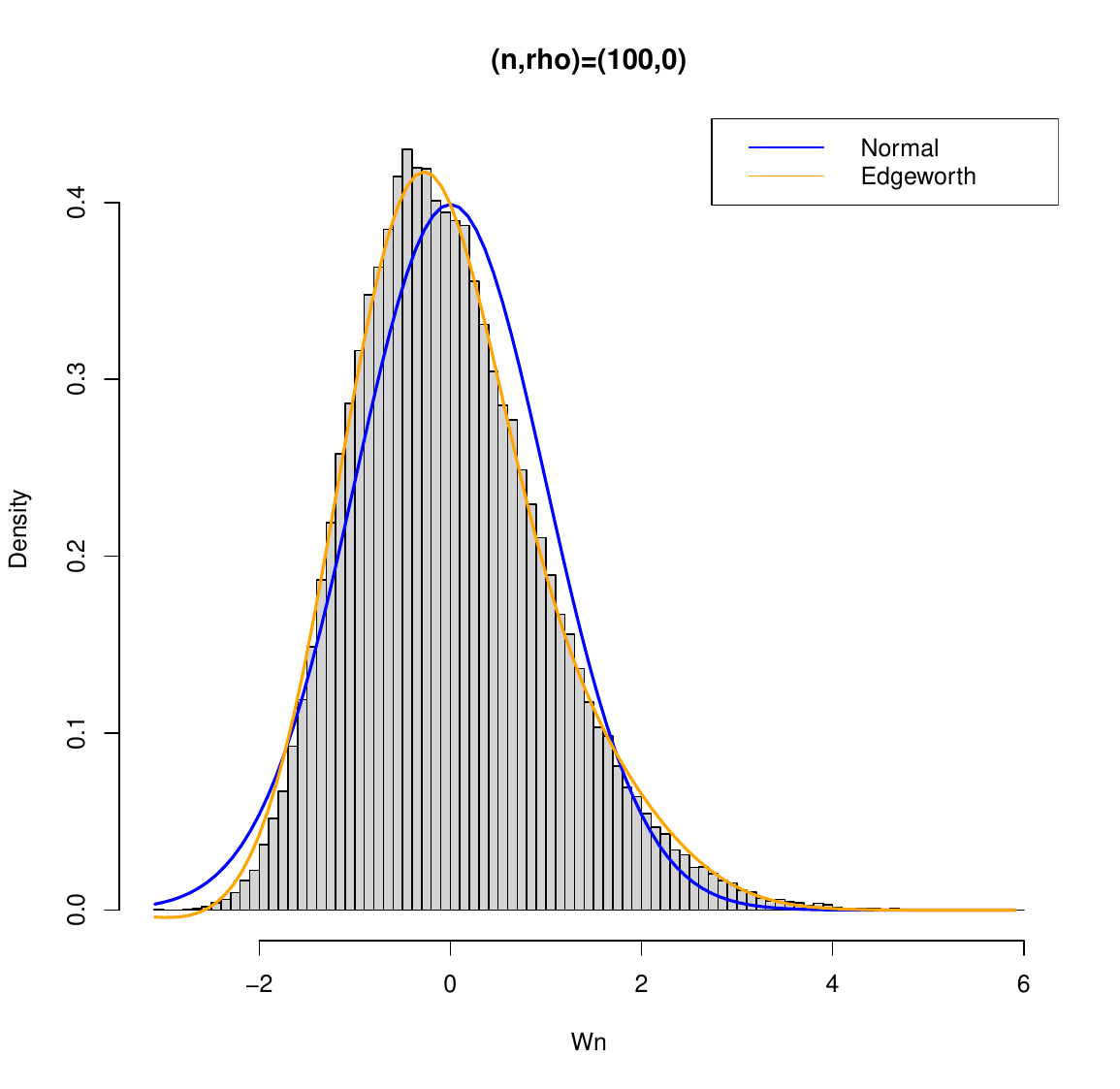}
			\caption{Chisq \& Chisq at $n=100$}
			\label{fig:image2}
		\end{subfigure}
		
		\begin{subfigure}{0.5\textwidth}
			\centering
			\includegraphics[width=\linewidth]{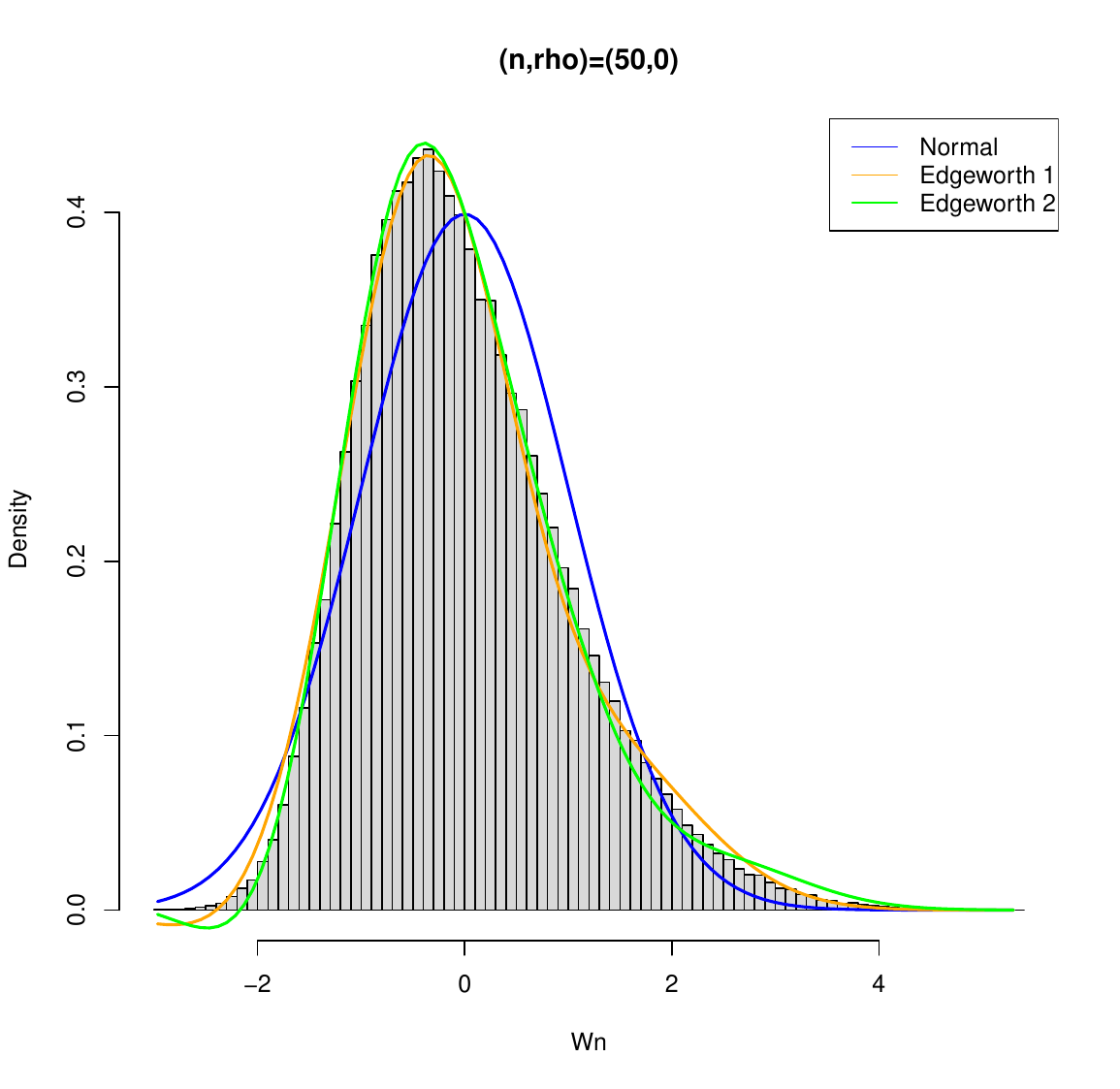}
			\caption{Chisq \& Chisq at $n=50$}
			\label{fig:image3}
		\end{subfigure}%
		\begin{subfigure}{0.5\textwidth}
			\centering
			\includegraphics[width=\linewidth]{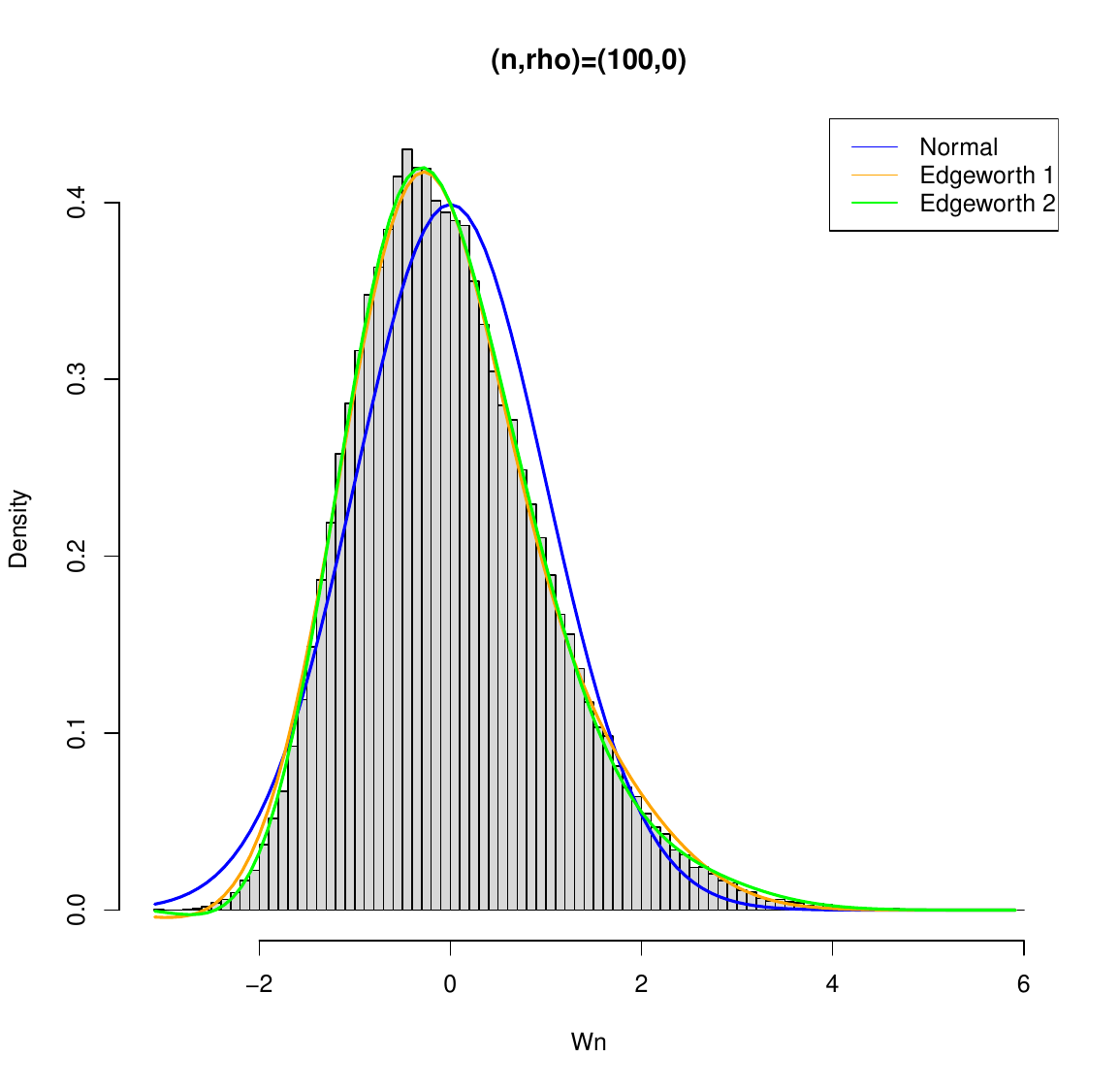}
			\caption{Chisq \& Chisq at $n=100$}
			\label{fig:image4}
		\end{subfigure}
		\caption{The Edgeworth expansion for continuous-continuous case at $n=50, 100$}
		\label{fig:both_images2}
	\end{figure}
	
	\textbf{Experiment 2 (Continuous and discrete random variables )} In this experiment, we generate two independent and identically distributed random variables, one discrete and the other continuous. Consider two specific random variables, one following $\chi^2(1)$ and the other following $Poisson(1)$. Define
	\[\Z=(X,Y,X^2,Y^2,XY),\]
	and $\Z_i=(X_{i},Y_{i},X_{i}^2, Y_{i}^2,X_{i}Y_{i})$. Suppose
	\begin{equation*}
		\setlength{\arraycolsep}{4pt}
		\mu=(1,1,2,3,1),\quad\Sigma=\begin{pmatrix}
			1&0&3&0&1\\
			0&2&0&12&2\\
			3&0&11&0&3\\
			0&12&0&96&12\\
			1&2&3&12&5
		\end{pmatrix}.
	\end{equation*}
	Based on our previous theorem, we can find that $\Z_i$ satisfy GPCC.
	
	\begin{figure}
		\begin{subfigure}{0.5\textwidth}
			\centering
			\includegraphics[width=\linewidth]{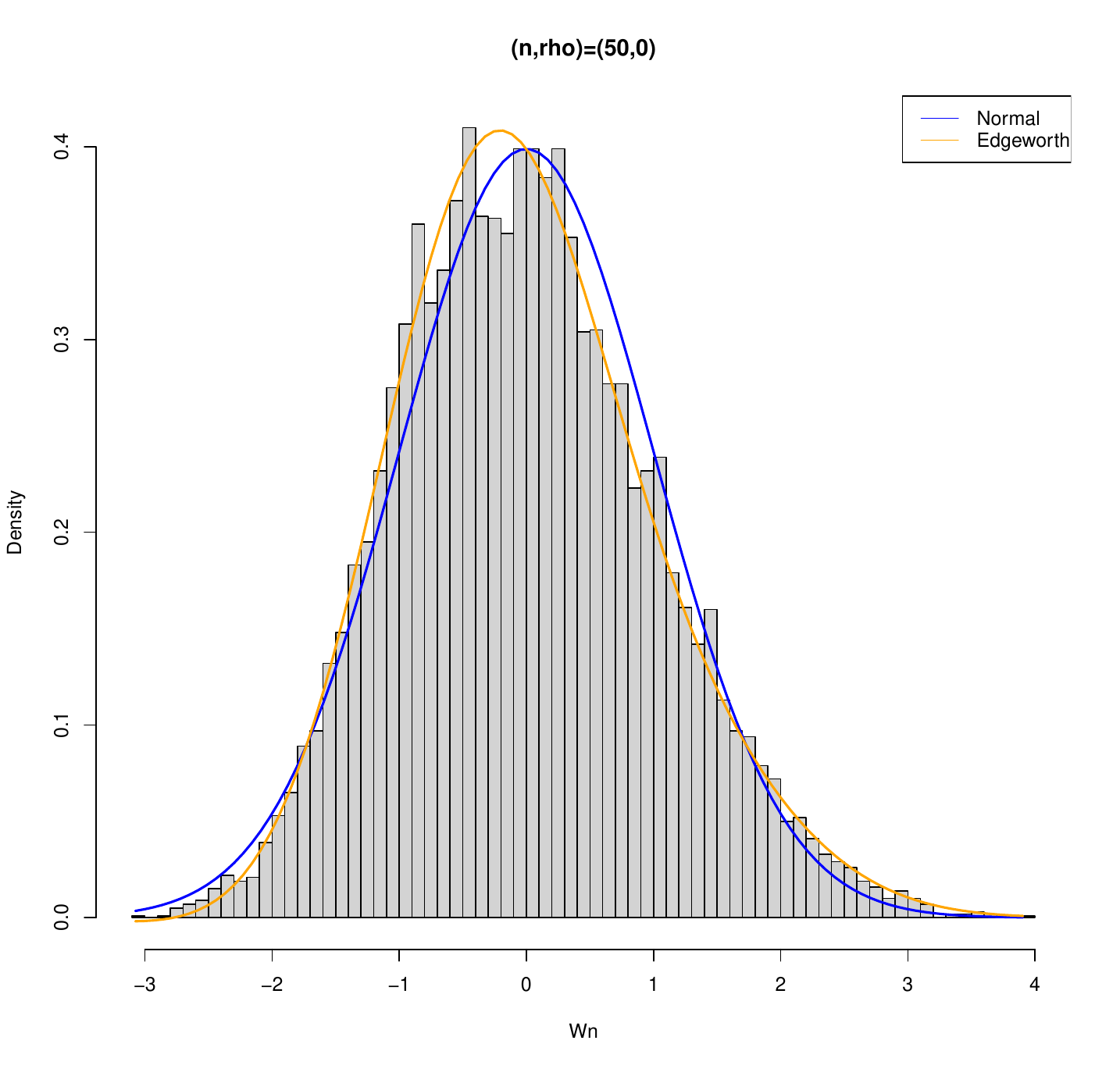}
			\caption{Poisson \& Chisq case at $n=50$}
			\label{fig:image5}
		\end{subfigure}%
		\begin{subfigure}{0.5\textwidth}
			\centering
			\includegraphics[width=\linewidth]{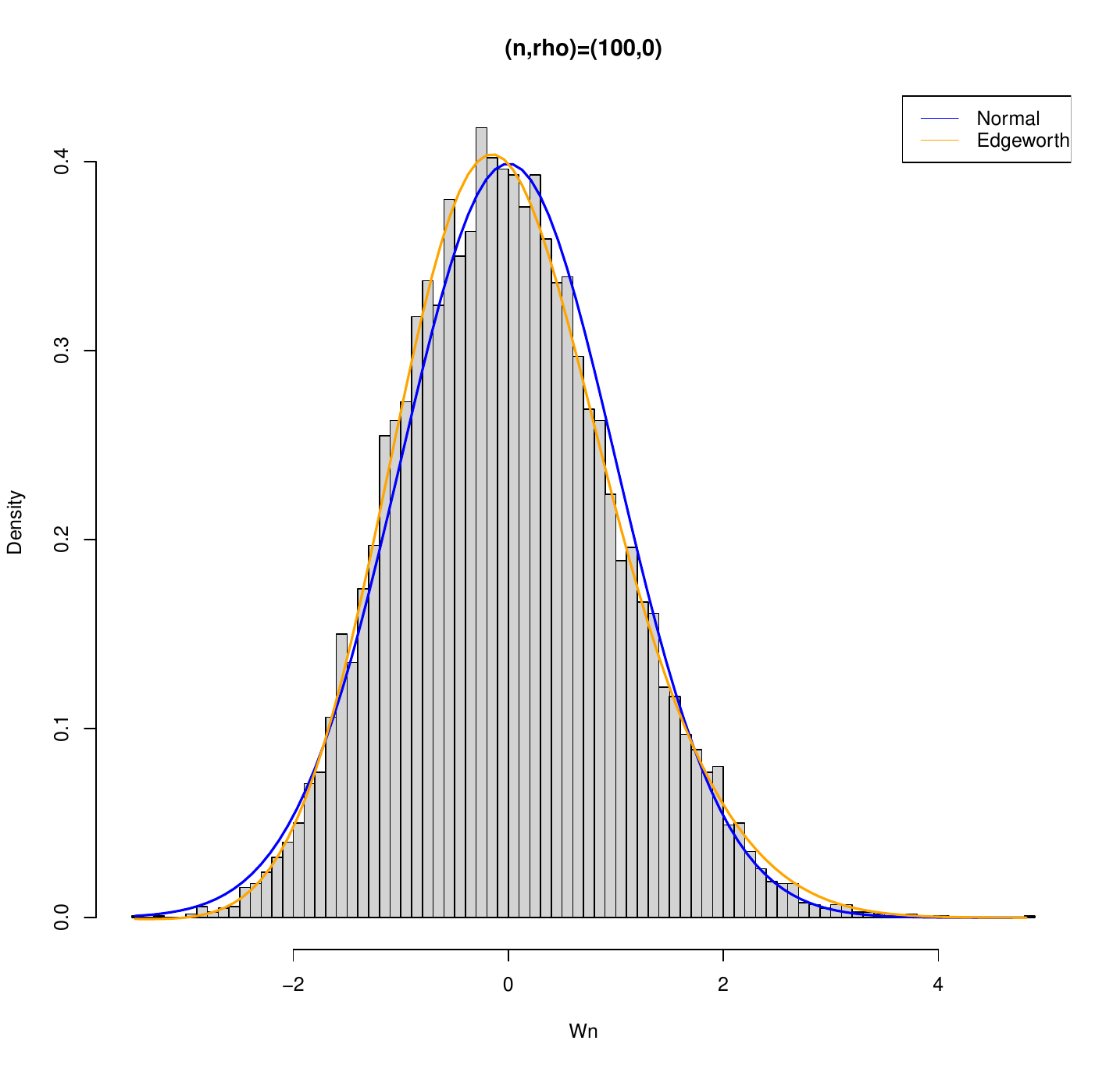}
			\caption{Poisson \& Chisq case at $n=100$}
			\label{fig:image6}
		\end{subfigure}
		
		\begin{subfigure}{0.5\textwidth}
			\centering
			\includegraphics[width=\linewidth]{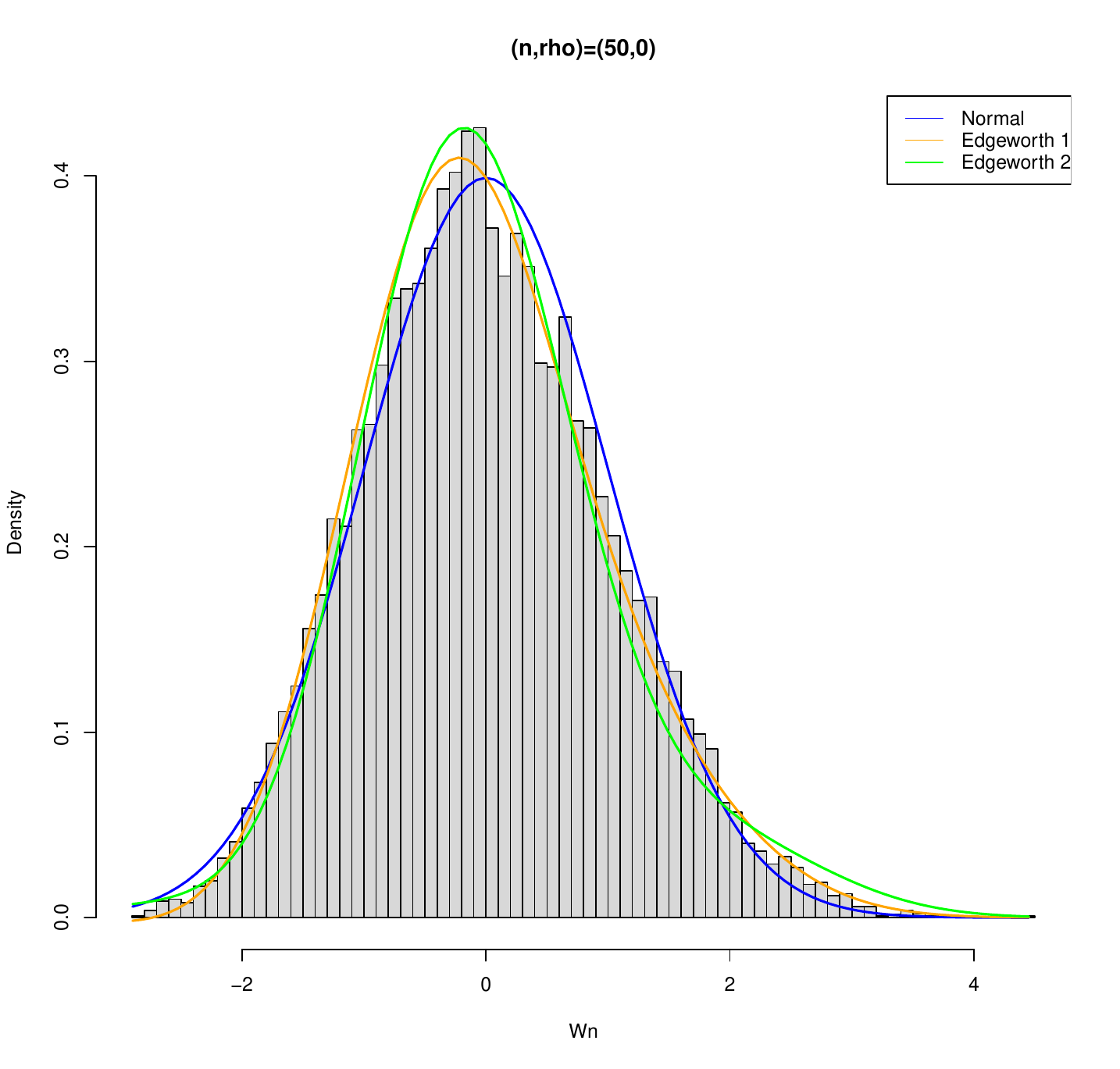}
			\caption{Poisson \& Chisq at $n=50$}
			\label{fig:image7}
		\end{subfigure}%
		\begin{subfigure}{0.5\textwidth}
			\centering
			\includegraphics[width=\linewidth]{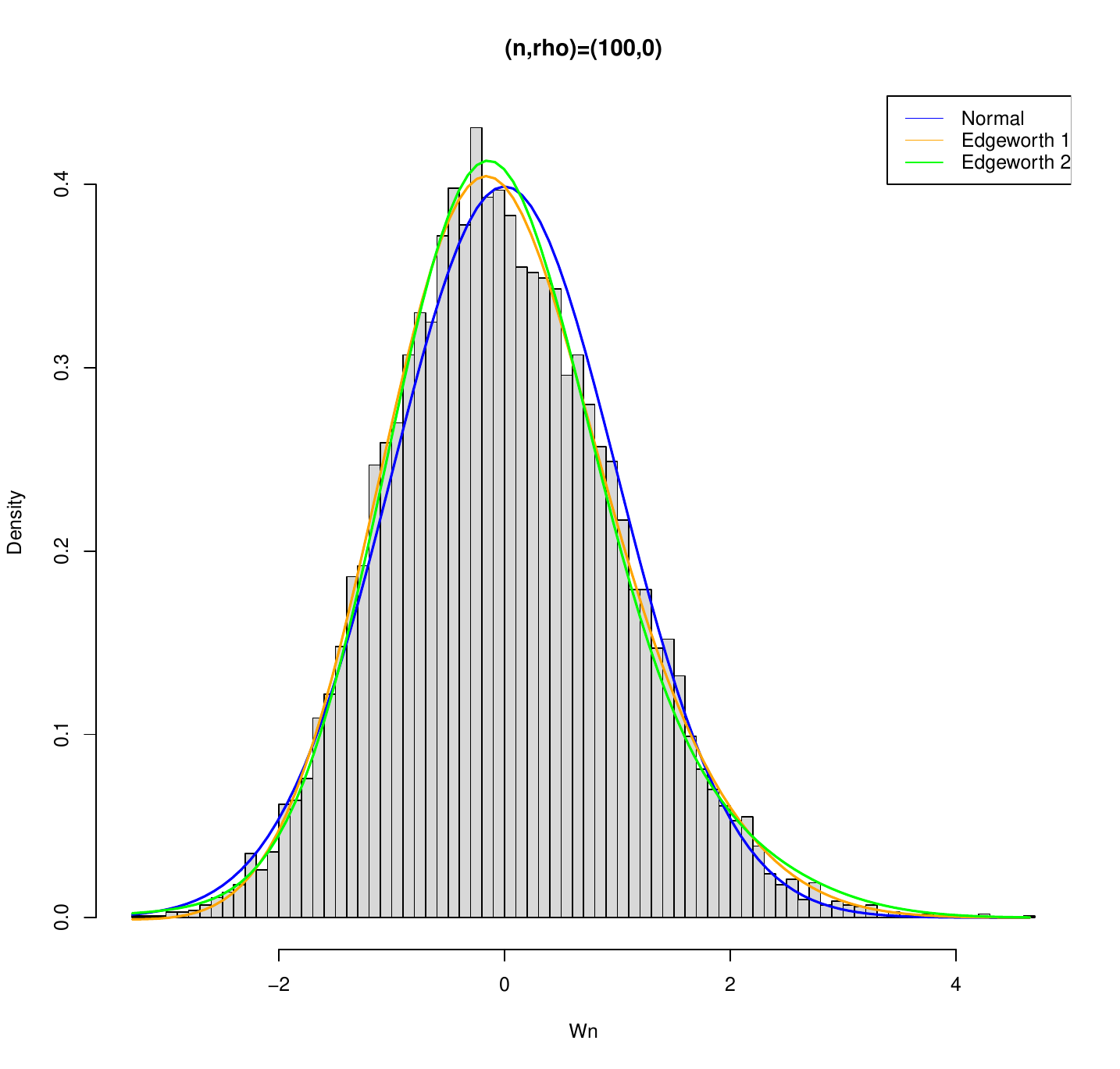}
			\caption{Poisson \& Chisq at $n=100$}
			\label{fig:image8}
		\end{subfigure}
		\caption{The Edgeworth expansions for continuous-discrete case at $n=50, 100$}
		\label{fig:both_images4}
	\end{figure}
	
	Next, we select the statistic $W_n=n^{1/2}(\hat\rho-\rho)$, and analyze the fit of different asymptotic distributions to it. The value of each coefficient is calculated using 10000 samples to obtain the expression of Edgeworth expansions. We then do a simulation experiment with a small sample. The parameters
	\[n\in\{50,100\}\]
	are chosen, so that $n$ is neither too small for asymptotics to be meaningful nor too large to distinguish $\Psi_{s,n}(x)$ and $\Phi(x)$. This is an ideal example for illustrating the performance of Edgeworth expansions, because the sample size is small, and the normal approximation is inaccurate.
	
	In Figure \ref{fig:both_images2}, the histograms depict the empirical distribution of $W_n$. The blue curve represents the density function of the standard normal distribution, while the orange curve denotes the probability density function of the first-order Edgeworth expansion. The apparent deviation between the orange and blue curves indicates that the first-order Edgeworth expansion fits the distribution of $W_n$ more accurately than the standard normal distribution.
	
	Similarly, the green curve represents the probability density function of the second-order Edgeworth expansion. While the discrepancy between the orange and green curves is minimal, the nuanced differences reveal that the fit of the second-order Edgeworth expansion to the distribution of $W_n$ is better than that of the first-order Edgeworth expansion. Figure \ref{fig:both_images4} shows the same results as Figure \ref{fig:both_images2}. 
	
	Therefore, based on our simulation results, we verify that the first-order Edgeworth expansion is more accurate than the normal distribution, and the second-order Edgeworth expansion is more accurate than the first-order Edgeworth expansion.

	\subsection{Simulation experiments for ratio of sample means}
	In this section, we consider the case $k=2$ to evaluate the performance of the Edgeworth expansion for the ratio of sample means through numerical experiments. We present the results of both the first-order and second-order Edgeworth expansions. For comparison, we also present the results of the normal approximation. 
	
	We begin by generating three independent and identically distributed random variables: one continuous and two discrete. Specifically, one follows a $\chi^2(1)$ distribution and the other two follow a $Poisson(1)$ distribution. Define 
	\[\Z=(X,X^2,Y_1,Y_2).\]
	Assume
	\begin{equation*}
		\setlength{\arraycolsep}{4pt}
		\mu=(1,3,1,1),\quad\Sigma=\begin{pmatrix}
			3&15&1&1\\
			15&105&3&3\\
			1&3&2&1\\
			1&3&1&2
		\end{pmatrix}.
	\end{equation*}
	Let $\Z_i=(X_{i},X_{i}^2, Y_{1i},Y_{2i})$. Based on our previous theorem, we find that $\Z_i$ satisfies GPCC. We then consider the statistic 
	\[W_n=n^{1/2}(H(\bar Z)-H(\mu)),\]
	and analyze the fit of different asymptotic distributions to it. The coefficients are calculated using 10000 samples to obtain the Edgeworth expansions. We then conduct a simulation experiment with a small sample. The parameters
	\[n\in\{100, 200, 300, 500\}\]
	are chosen, so that $n$ is neither too small for asymptotics to be meaningful nor too large to distinguish between $\Psi_{s,n}(x)$ and $\Phi(x)$. This provides an ideal example for illustrating the performance of Edgeworth expansions because the sample size is small and the normal approximation is inaccurate.

	\begin{figure}
		\begin{subfigure}{0.5\textwidth}
			\centering
			\includegraphics[width=\linewidth]{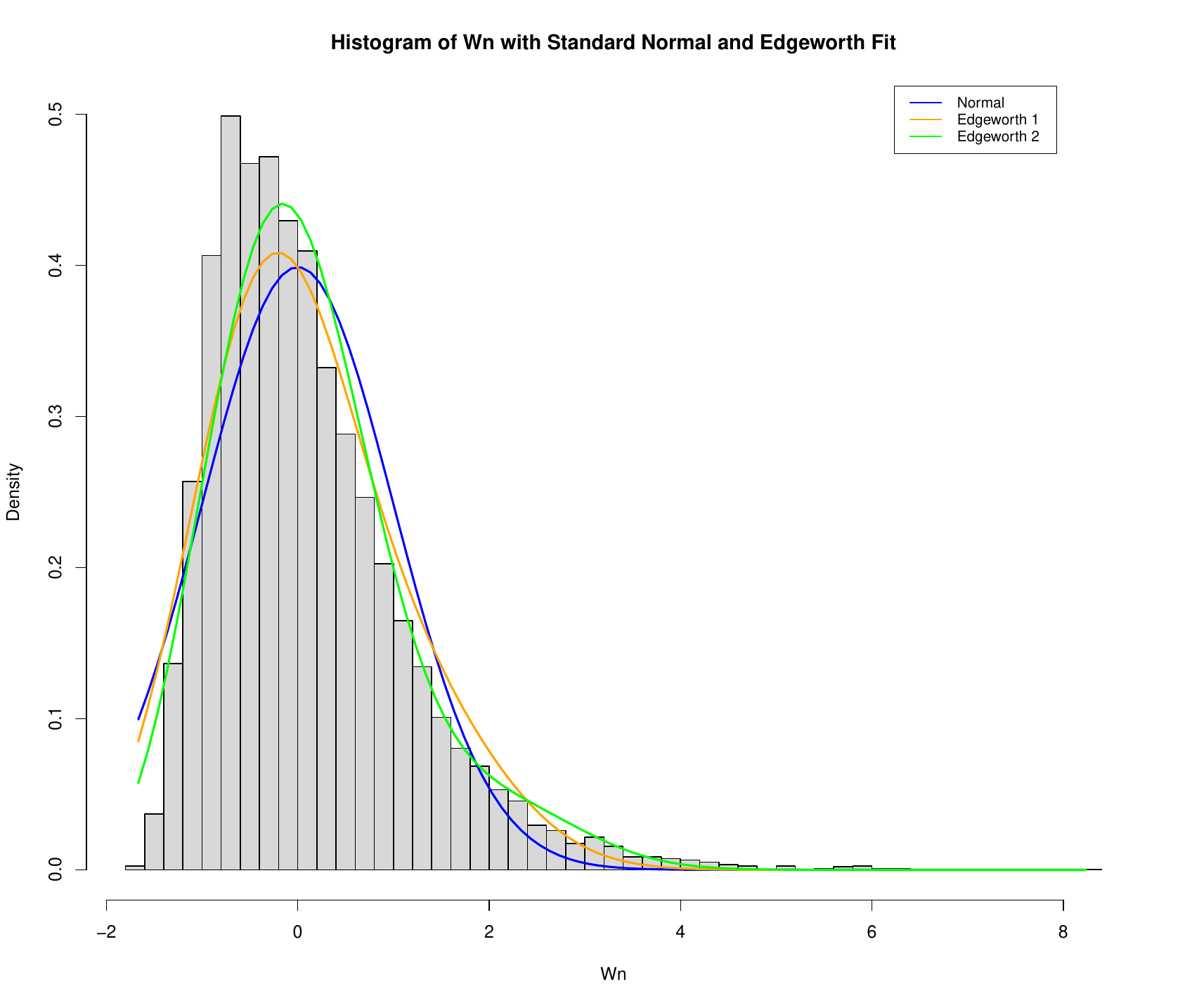}
			\caption{Poisson \& Chisq case at $n=100$}
			\label{fig:image5}
		\end{subfigure}%
		\begin{subfigure}{0.5\textwidth}
			\centering
			\includegraphics[width=\linewidth]{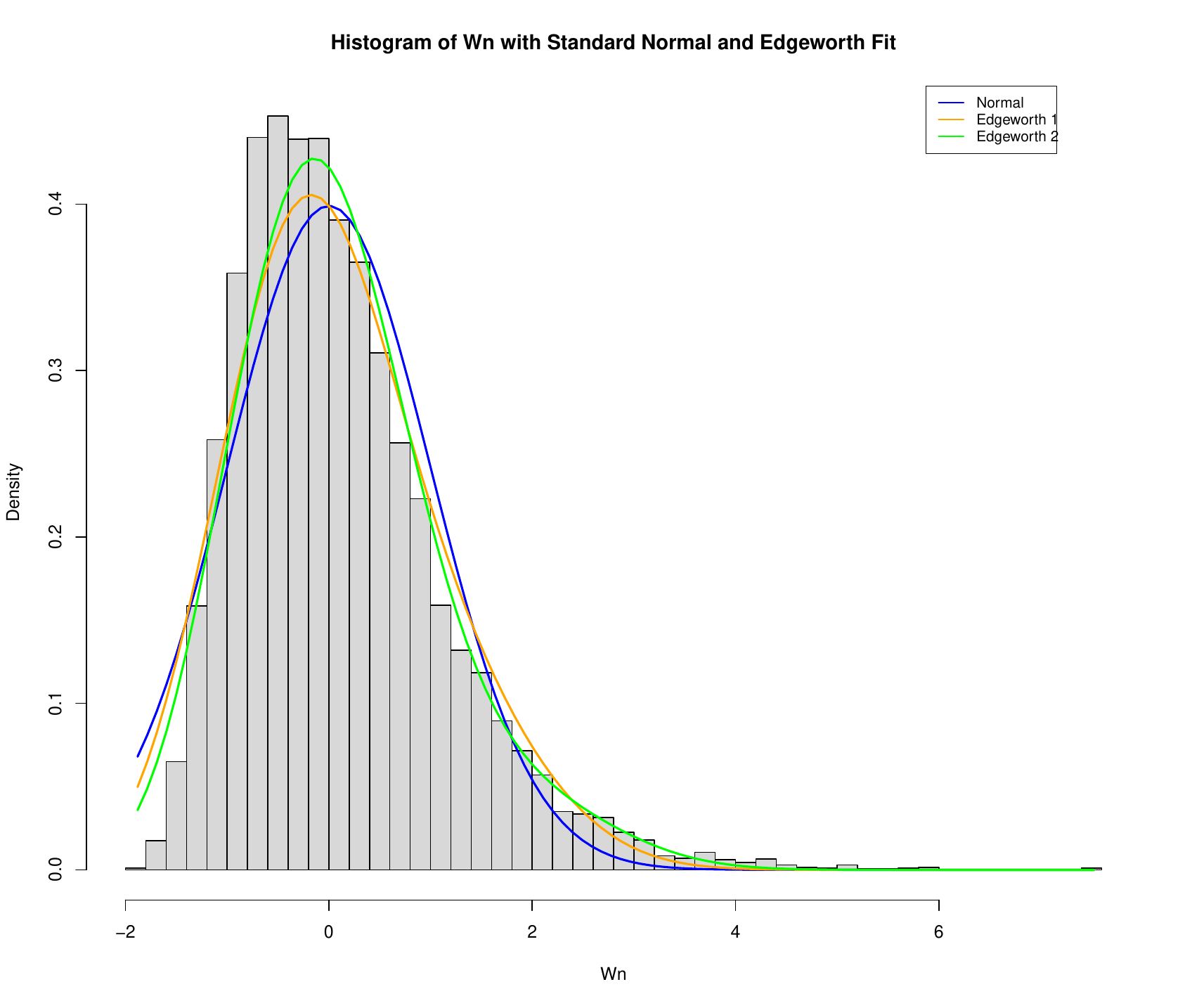}
			\caption{Poisson \& Chisq case at $n=200$}
			\label{fig:image6}
		\end{subfigure}
		
		\begin{subfigure}{0.5\textwidth}
			\centering
			\includegraphics[width=\linewidth]{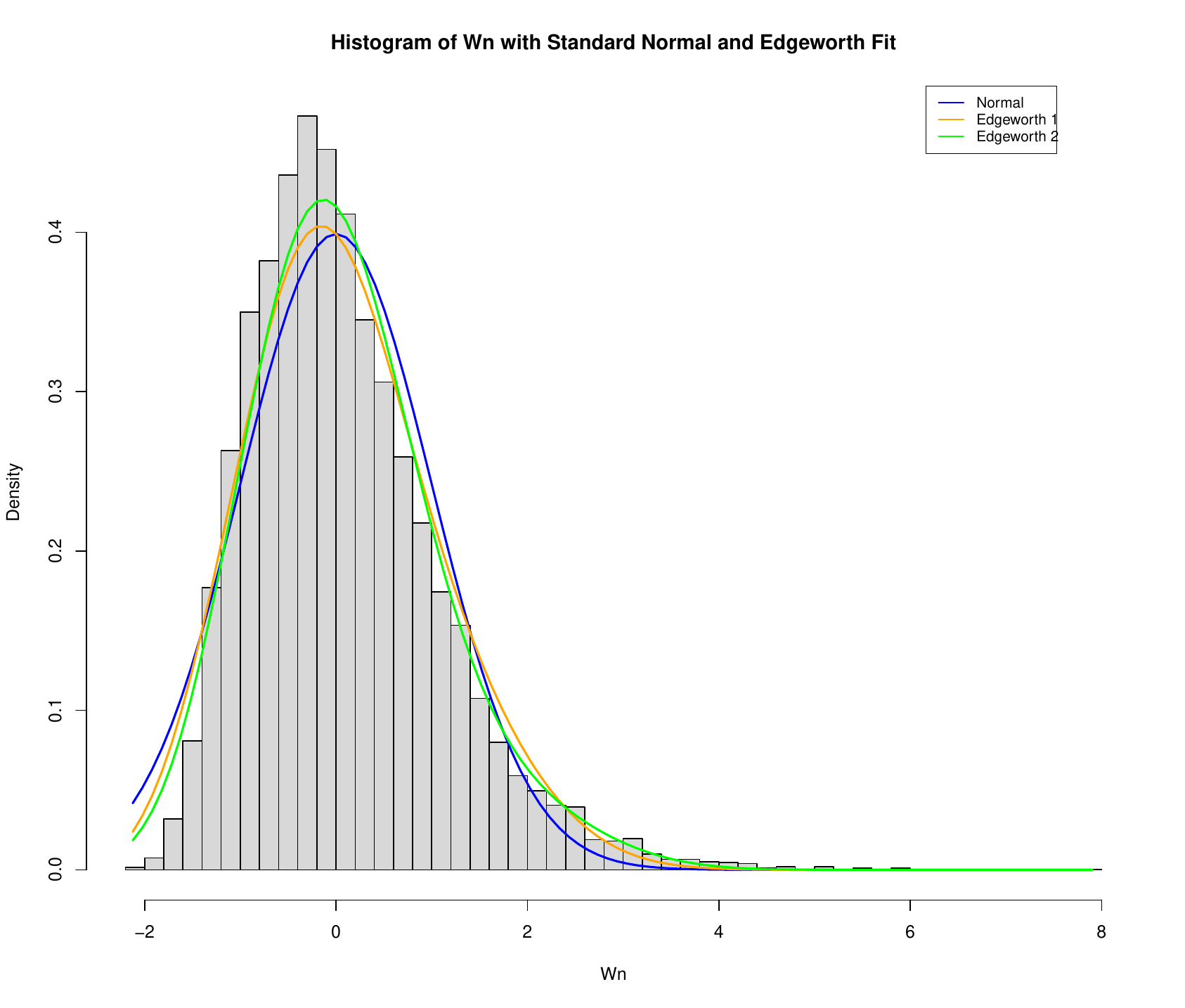}
			\caption{Poisson \& Chisq at $n=300$}
			\label{fig:image7}
		\end{subfigure}%
		\begin{subfigure}{0.5\textwidth}
			\centering
			\includegraphics[width=\linewidth]{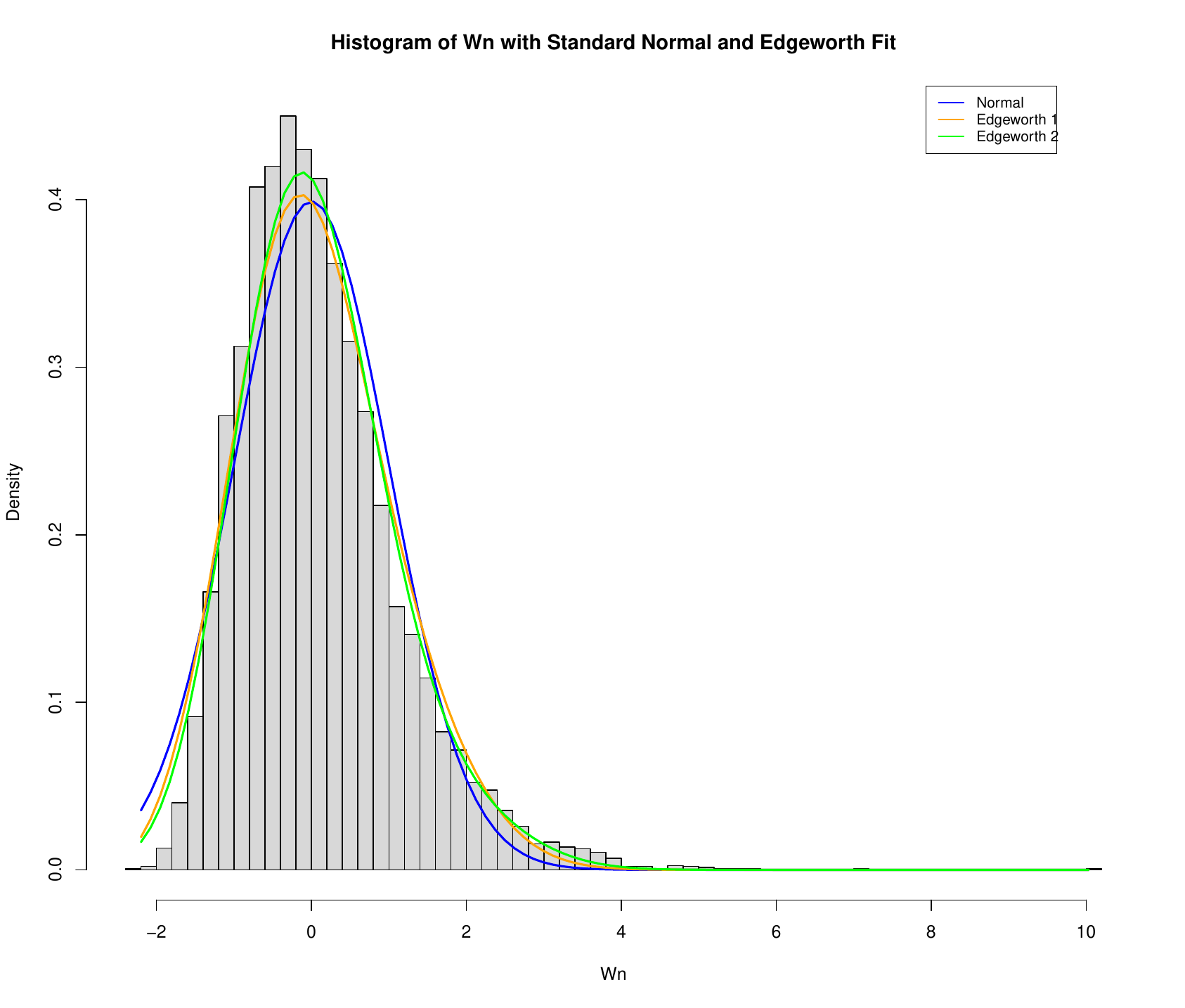}
			\caption{Poisson \& Chisq at $n=500$}
			\label{fig:image8}
		\end{subfigure}
		\caption{The Edgeworth expansions for the ratio of sample means for different sample sizes}
		\label{fig:both_images6}
	\end{figure}
	
	In Figure \ref{fig:both_images6}, the histograms depict the empirical distribution of $W_n$. The blue curves illustrate the density function of the standard normal distribution, while the orange curves denote the probability density function of the first-order Edgeworth expansion, and the green curves represent the probability density function of the second-order Edgeworth expansion. A notable deviation exists between the orange and blue curves, suggesting that the first-order Edgeworth expansion provides a more accurate fit for the distribution of $W_n$ compared to the standard normal distribution. Similarly, the difference between the orange and green curves indicates that the second-order Edgeworth expansion offers a superior fit for the distribution of $W_n$ relative to the first-order Edgeworth expansion. 
	
	\subsection{Simulation experiments for Z-score test statistic}
	In this subsection, we use numerical experiments to evaluate the performance of the Edgeworth expansion for the Z-score test statistic. We present the results of both the first-order and second-order Edgeworth expansions. For comparison, we also present the results of the normal approximation. 
	
	In this experiment, we generate two independent and identically distributed continuous random variables $X$ and $Y$, each following a $N(0,1)$ distribution, and generate data of size $n$. Suppose that 
	\begin{equation*}
		\setlength{\arraycolsep}{4pt}
		\mu=(0,1,0,1),\quad\Sigma=\begin{pmatrix}
			1&0& 0& 0\\
			0&3& 0& 1\\
			0& 0& 1& 0\\
			0&1& 0& 3
		\end{pmatrix}.
	\end{equation*}
	
	Let $\Z_i=(X_i, X_i^2, Y_i,Y_i^2)$. Based on our previous theorem, we find that $\Z_i$ satisfies GPCC. We then consider the statistic 
	\[W_n=n^{1/2}(H(\bar Z)-H(\mu)),\]
	and analyze the fit of different asymptotic distributions to it. The coefficients are calculated using 10000 samples to obtain the Edgeworth expansions. We then conduct a simulation experiment with a small sample. The parameters $n_1/n_2=1/4$ and
	\[n_2\in\{5, 10,15,20\}\]
	are chosen, so that $n_2$ is neither too small for asymptotics to be meaningful nor too large to distinguish between $\Psi_{s,n}(x)$ and $\Phi(x)$. This is an ideal example for illustrating the performance of Edgeworth expansions because the sample size is small, and the normal approximation is inaccurate.
	\begin{figure}
		\begin{subfigure}{0.5\textwidth}
			\centering
			\includegraphics[width=\linewidth]{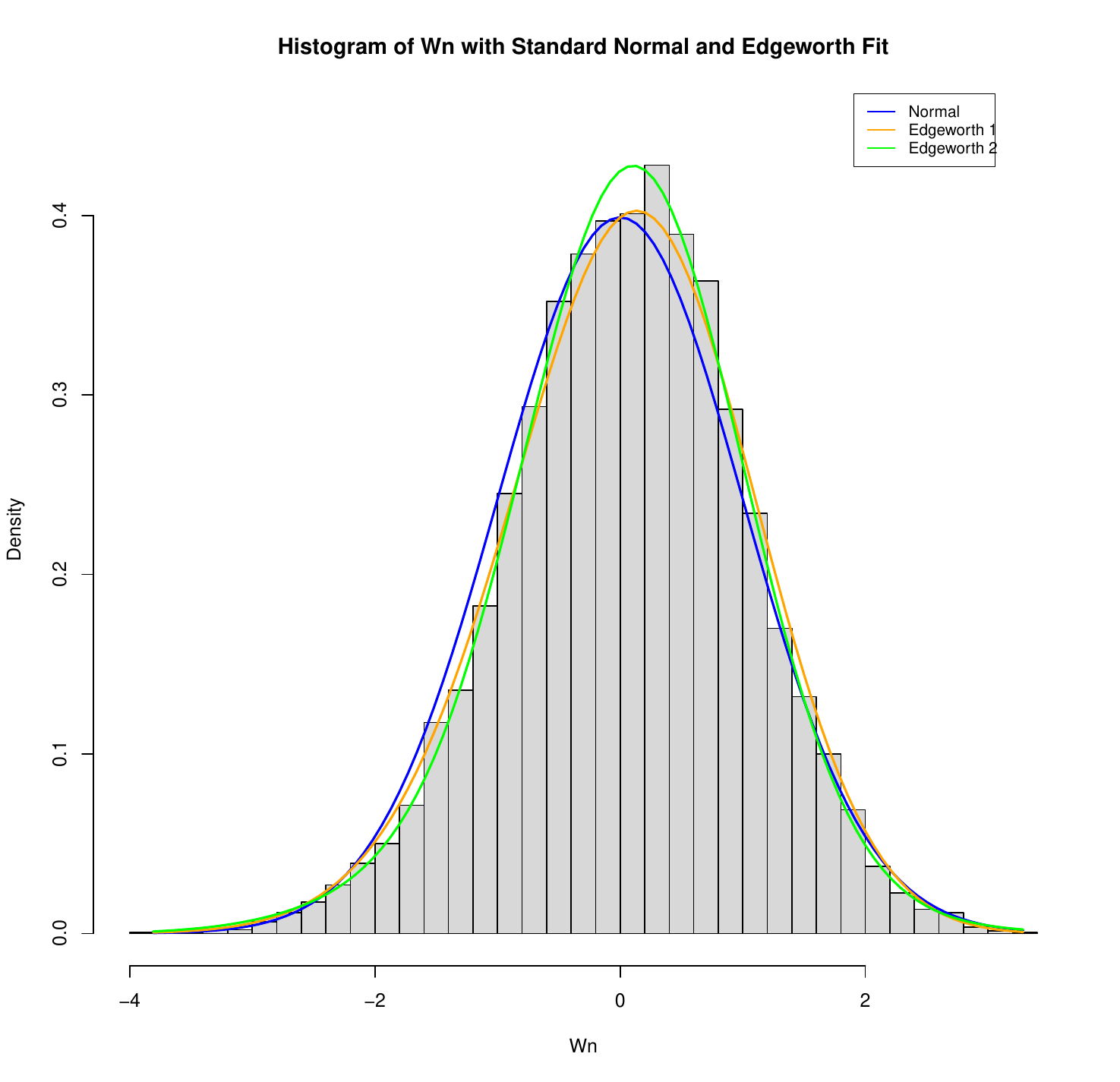}
			\caption{Normal \& Normal at $n=5$}
			\label{fig:image7}
		\end{subfigure}%
		\begin{subfigure}{0.5\textwidth}
			\centering
			\includegraphics[width=\linewidth]{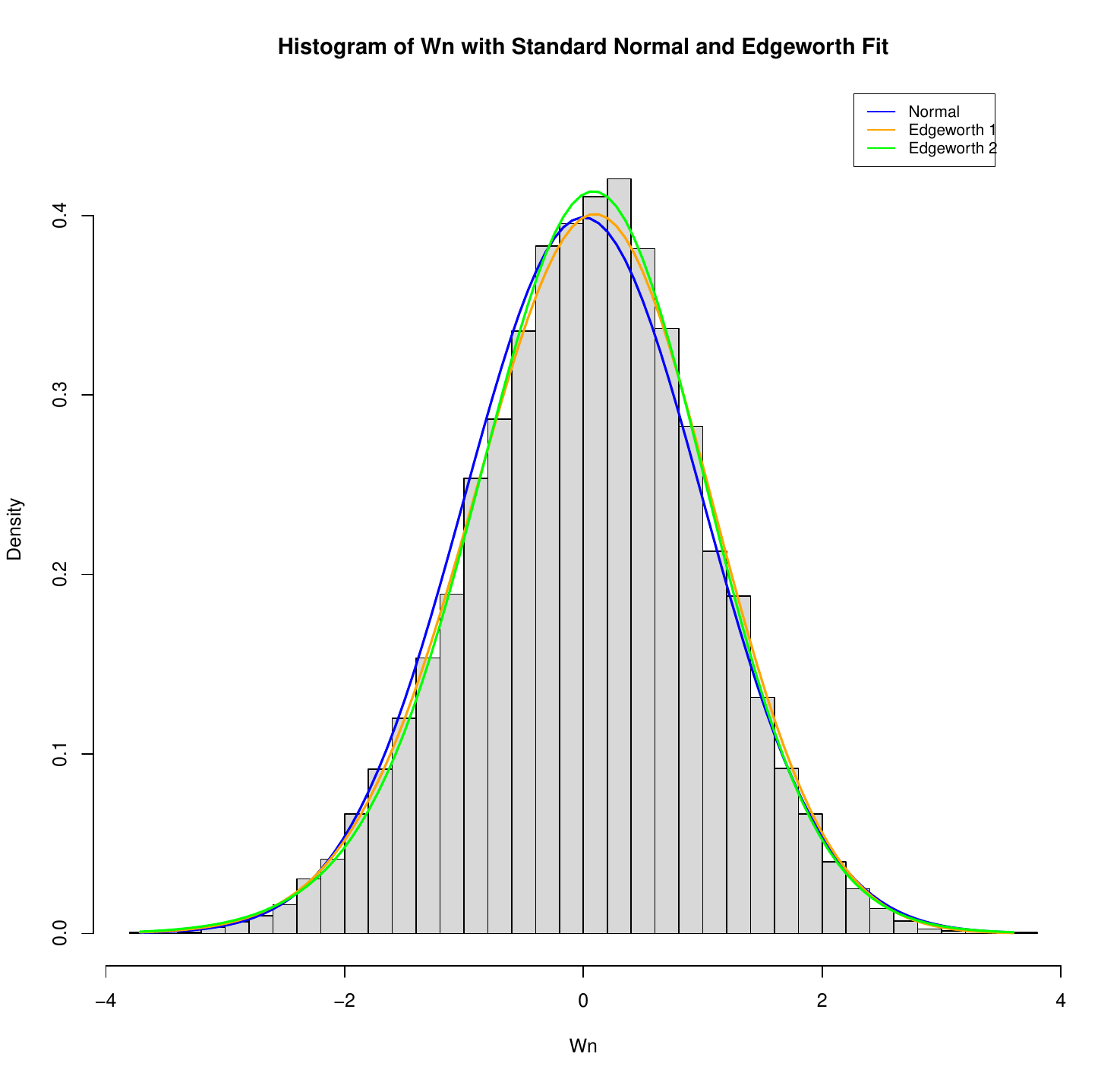}
			\caption{Normal \& Normal at $n=10$}
			\label{fig:image8}
		\end{subfigure}
		
		\begin{subfigure}{0.5\textwidth}
			\centering
			\includegraphics[width=\linewidth]{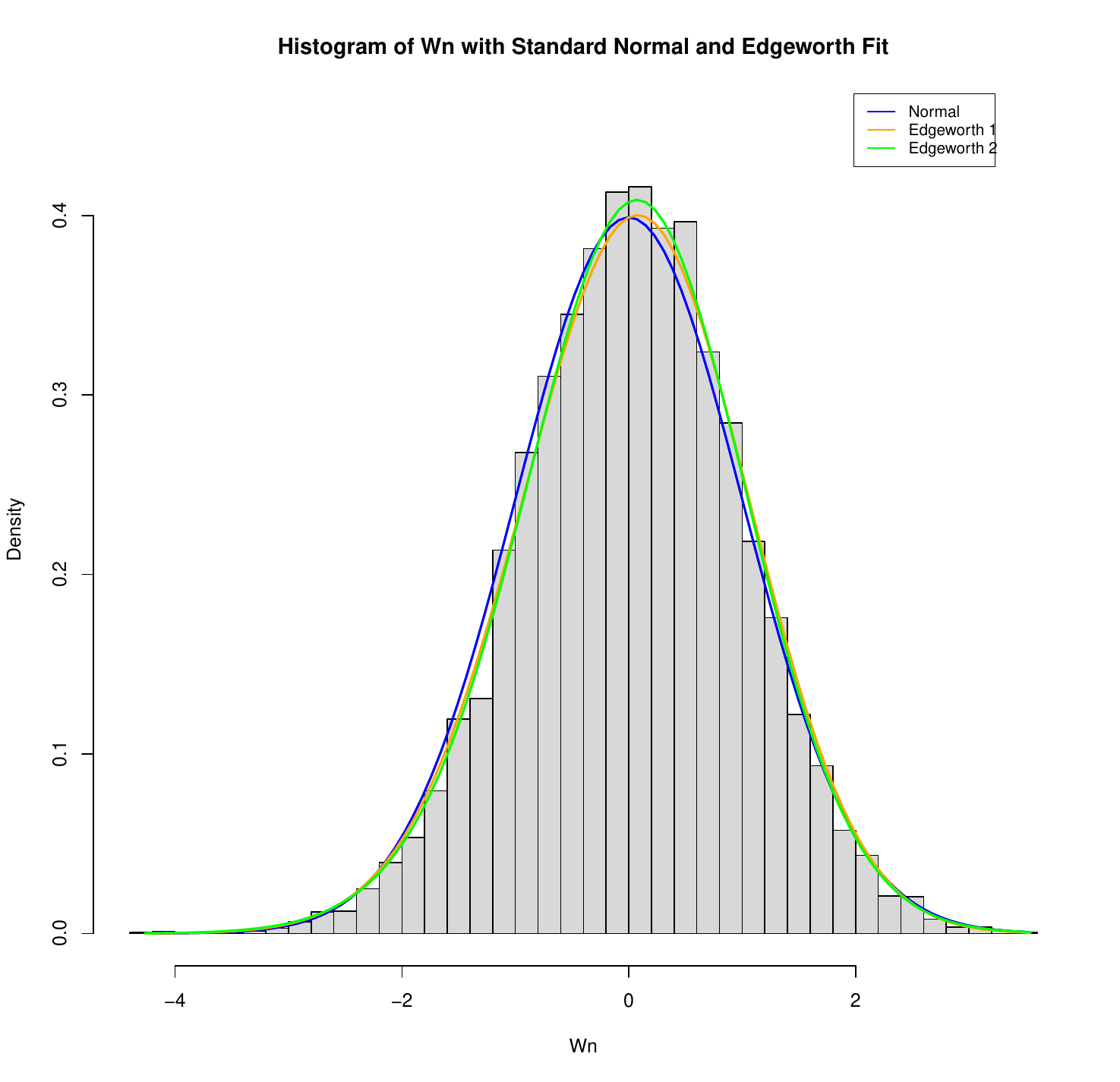}
			\caption{Normal \& Normal at $n=15$}
			\label{fig:image7}
		\end{subfigure}%
		\begin{subfigure}{0.5\textwidth}
			\centering
			\includegraphics[width=\linewidth]{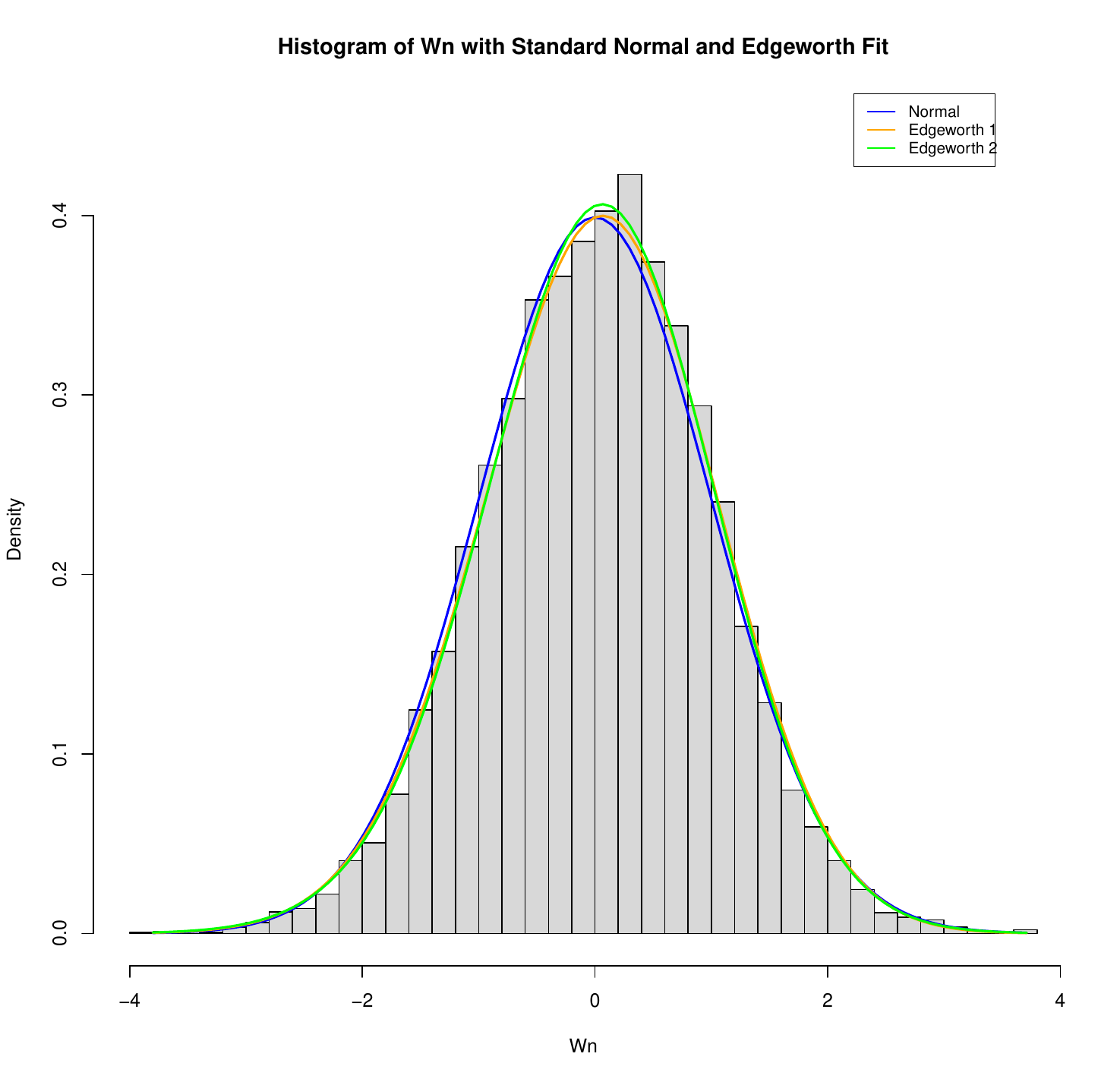}
			\caption{Normal \& Normal at $n=20$}
			\label{fig:image8}
		\end{subfigure}
		\caption{The Edgeworth expansions for the Z-score statistic for different sample sizes}
		\label{fig:both_images8}
	\end{figure}

	In Figure \ref{fig:both_images8}, the histograms depict the empirical distribution of $W_n$. The blue curves illustrate the density function of the standard normal distribution, while the orange curves denote the probability density function of the first-order Edgeworth expansion, and the green curves represent the probability density function of the second-order Edgeworth expansion. A notable deviation exists between the orange and blue curves, suggesting that the first-order Edgeworth expansion provides a more accurate fit for the distribution of $W_n$ compared to the standard normal distribution. Similarly, the difference between the orange and green curves indicates that the second-order Edgeworth expansion offers a superior fit for the distribution of $W_n$ relative to the first-order Edgeworth expansion.

	\section{Technical proof of Theorem \ref{distance}}\label{sec6}
	In this section, we present the core of the proof of Theorem \ref{distance}. Throughout the proofs, we use $C$ to denote an absolute constant that may vary with each occurrence.

	Assume $P^*$ is a conditional probability given $C_n$. Let $\bm{\mu}_{s}$ be the $s$-th moment of $\textbf{Z}_j$, $\bm\rho_{s}$ be the $s$-th  absolute moment of $\Z_j $ and $\bm\chi_{s}$ be the $s$-th cumulant of $\Z_j(1\le j\le n)$. Namely, we write
	\[\bm\rho_{s}=\mathbb{E}\Vert \Z_j\Vert^s, \quad \bm\mu_{s}=\mathbb{E}\Z_{j}^{s},\quad (1\le j\le n).\]
	Define truncated random vectors
	\begin{equation*}
		\hat \Z_{j}=\left\{
		\begin{aligned}
			\Z_j &, &\|\Z_j\|\le n^{1/2}\\
			0 &, &\|\Z_j\|>n^{1/2},
		\end{aligned}
		\right.
		\quad \tilde \Z_{j}=\hat \Z_{j}-\mathbb{E}\hat \Z_{j}\quad (1\le j\le n).
	\end{equation*}
	And then let $\hat\bmmu_{s,j}$ be the $s$-th moment of $\hat \Z_j$, $\hat\bmrho_{s,j}$ be the $s$-th  absolute moment of $\hat \Z_j $ and $\hat\bmchi_{s,j}$ be the $s$-th cumulant of $\hat \Z_j (1\le j\le n)$. Besides, let $\tilde\bmmu_{s,j}$ be the $s$-th moment of $\tilde \Z_j$, $\tilde\bmrho_{s,j}$ be the $s$-th  absolute moment of $\tilde \Z_j $ and $\tilde\bmchi_{s,j}$ be the $s$-th cumulant of $\tilde \Z_j (1\le j\le n)$. Namely, write
	\[\hat\bmmu_{s,j}=\mathbb{E}\hat \Z_{j}^{s},\quad\hat\bmrho_{s,j}=\mathbb{E}\Vert \hat\Z_j\Vert^s,\quad \tilde\bmmu_{s,j}=\mathbb{E}\tilde \Z_{j}^{s},\quad\tilde\bmrho_{s,j}=\mathbb{E}\Vert \tilde\Z_j\Vert^s.\]
	Also introduce
	\[\Delta_{n,j,s}=\int_{\{\Vert \Z_j\Vert>n^{1/2}\}}\Vert \Z_j\Vert^s,\quad \bar\Delta_{n,s}=n^{-1}\sum_{j=1}^n\Delta_{n,j,s},\]
	\[\bar \Delta_{n,s}(\epsilon)=n^{-1}\sum_{j=1}^n\int_{\{\|\Z_j\|>\epsilon n^{1/2}\}}\|\Z_j\|^s\quad (\epsilon>0).\]
	Finally, let $B_n$ be the common covariance matrix of $\hat \Z_{1}$ and $\tilde \Z_{1}$. The symbol $*$ denotes the convolution operation. And we define the norm of a $k\times k$ matrix $T$. Namely,
	\[B_n=Cov(\hat \Z_{1})=Cov(\tilde \Z_{1}),\quad \|T\|=\sup_{x\in R^k,~\|x\|\le 1}\|Tx\|.\]
	
	\begin{lemma}\label{P_ra}
		Assume $P_{r}$ is the formal Edgeworth expansion of the probability distribution of $W_{n}'$. Similarly, let $P_{ra}$ represent the formal Edgeworth expansion of the conditional probability distribution of $W_{n}'$, given the last $k-a$ components of $\Z_j$.  Then, we can obtain:
		\[\mathbb{E}P_{ra}=P_{r}+o(n^{-(s-2)/2}).\]
	\end{lemma}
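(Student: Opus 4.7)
The plan is to work at the level of Fourier--Stieltjes transforms and exploit the tower property $\phi_n(t)=\mathbb{E}\phi_n^*(t\,|\,C_n)$, where $\phi_n(t)=\mathbb{E}e^{itW_n'}$ and $\phi_n^*(t\,|\,C_n)=\mathbb{E}[e^{itW_n'}\,|\,C_n]$. By construction, $P_r$ is the $r$-th inverse-Fourier coefficient in the unconditional formal expansion of $\phi_n(t)$ given in \eqref{fourierexpansion}, while $P_{ra}$ is the corresponding coefficient of the analogous formal expansion of the conditional characteristic function $\phi_n^*(t\,|\,C_n)$, whose coefficients are measurable functions of $C_n$.

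First, I would make the conditional expansion fully explicit. Conditional on $C_n$, the coordinates $\bar Z_{a+1},\ldots,\bar Z_k$ are deterministic, so the Taylor expansion \eqref{Taylor expansion} of $W_n'$ reduces to a polynomial in $\bar Z_1-\mu_1,\ldots,\bar Z_a-\mu_a$ with coefficients that are polynomials in $\bar Z_i-\mu_i$ for $i>a$ and in the partial derivatives of $H$ at $\bmmu$. Using the conditional independence of $\Z_1,\ldots,\Z_n$ given $C_n$, the conditional cumulants of $W_n'$ can be written as polynomials in the empirical averages $n^{-1}\sum_{j=1}^{n}\kappa_{\nu}^{*}(Z_{j,a+1},\ldots,Z_{jk})$ of conditional cumulants of $(Z_{j1},\ldots,Z_{ja})$ given $(Z_{j,a+1},\ldots,Z_{jk})$, together with the deterministic shifts $\bar Z_i-\mu_i$, $i>a$. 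Substituting these into the formal expansion of $\log\phi_n^*$ and inverting yields $P_{ra}$ as a polynomial density whose coefficients are random functions of $C_n$.

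Next, I would take expectation over $C_n$ and reorganize in powers of $n^{-1/2}$. By the tower property, $\mathbb{E}[n^{-1}\sum_j\kappa_{\nu}^{*}(Z_{j,a+1},\ldots,Z_{jk})]=\mathbb{E}\kappa_{\nu}^{*}(Z_{1,a+1},\ldots,Z_{1k})$. Combined with the contributions from the random shifts $n^{1/2}\sum_{i>a}l_i(\bar Z_i-\mu_i)$ entering the conditional mean of $W_n'$, which carry precisely the ``inter-group'' part needed to recover the unconditional cumulants from the expected conditional ones via the law of total cumulance, the expected $r$-th coefficient of the conditional expansion reassembles into the $r$-th coefficient of the unconditional expansion of $\phi_n$. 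Residual fluctuations $n^{-1}\sum_{j}\kappa_{\nu}^{*}-\mathbb{E}\kappa_{\nu}^{*}$ and higher powers of $\bar Z_i-\mu_i$ ($i>a$) are each $O(n^{-1/2})$ in the relevant $L^p$-sense under the finite $s$-th moment assumption on $\Z_1$, so products of such fluctuations contribute only at orders strictly finer than $n^{-(s-2)/2}$ and are absorbed into the error. Uniqueness of the formal asymptotic expansion in powers of $n^{-1/2}$ then delivers $\mathbb{E}P_{ra}=P_r+o(n^{-(s-2)/2})$.

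The main technical obstacle is the combinatorial bookkeeping of orders. Each $P_{ra}$ is a polynomial in $x$ whose coefficients are polynomials in conditional cumulants and in the random shifts $\bar Z_i-\mu_i$ of varying orders; verifying that, after taking expectation, these coefficients match $P_r$ up to error $o(n^{-(s-2)/2})$ requires applying the law of total cumulance to the joint cumulants of $W_n'$ at each order $r\le s-2$ and carefully controlling the residuals via moment bounds guaranteed by the finite $s$-th absolute moment of $\Z_1$. Since only finitely many cumulants of order at most $s$ enter any fixed order $r\le s-2$, this matching procedure terminates in finitely many steps, which makes the bookkeeping tractable even though it is intricate.
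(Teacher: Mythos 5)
Your proposal is correct and rests on the same underlying mechanism as the paper's proof --- the tower property linking the conditional and unconditional expansions, followed by an order-by-order matching of coefficients in powers of $n^{-1/2}$ --- but the bookkeeping is organized differently. The paper introduces the reduced conditional Taylor polynomial $W_{na}'$, shows that the expected conditional moments $\mathbb{E}[\mathbb{E}^*(W_{na}')^j]$ agree with the unconditional moments $\mathbb{E}(W_n')^j$ up to $o(n^{-(s-2)/2})$, and then passes from moments to cumulants to the formal expansion; the crucial nonlinear step, namely that the expectation of $\exp\bigl(it\tilde\kappa^*_{1,n}+\cdots\bigr)$ built from the \emph{random} conditional cumulants reproduces the unconditional expansion, is asserted in a single ``Notice that'' display. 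You instead work directly at the cumulant level: you decompose the unconditional cumulants via the law of total cumulance into expected conditional cumulants plus the ``inter-group'' contributions carried by the $C_n$-measurable shift, and you handle the nonlinearity explicitly by noting that the fluctuations of the empirical conditional cumulants about their means are mean-zero and $O(n^{-1/2})$, so their products only generate covariance corrections of relative order $n^{-1}$, which fall below $n^{-(s-2)/2}$ at every order $r\le s-2$. Your route makes transparent precisely the step the paper leaves implicit, at the cost of heavier combinatorics (total-cumulance identities at each order); the paper's moment-level matching is lighter because expectations of conditional moments are exactly unconditional moments by the tower property, but it defers the genuine difficulty to the unproved exchange of expectation with the exponential of the random cumulants.
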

	
	\begin{proof}Recall the definition of $W_{n}'$,
		\begin{align*}
			W_n'=n^{1/2}\Big(&\sum_{i=1}^k l_i(\bar Z_i-\mu_i)+\frac12\sum_{i,j=1}^k l_{i,j}(\bar Z_i-\mu_i)(\bar Z_j-\mu_j)+\cdots\\\nonumber
			&+\frac{1}{(s-1)!}\sum_{i_1,\dots,i_{s-1}=1}^kl_{i_1,\dots,i_{s-1}}(\bar Z_{i_1}-\mu_{i_1})\cdots(\bar Z_{i_{s-1}}-\mu_{i_{s-1}})\Big).
		\end{align*}
		Next, we define $W'_{na}$ as the Taylor expansion of $W_n$, given the last $k-a$ components of $\Z_j$,
		\begin{align} \label{Taylorexpansion2}
			W_{na}'&=n^{1/2}\Big(\sum_{i=1}^a m_i(\bar Z_i-\mu_i)+\frac12\sum_{i,j=1}^a m_{i,j}(\bar Z_i-\mu_i)(\bar Z_j-\mu_j)+\cdots\\\nonumber
			&\qquad +\frac{1}{(s-1)!}\sum_{i_1,\dots,i_{s-1}=1}^a m_{i_1,\dots,i_{s-1}}(\bar Z_{i_1}-\mu_{i_1})\cdots(\bar Z_{i_{s-1}}-\mu_{i_{s-1}})\Big).
		\end{align}
		Notice that
		\begin{align*}
			&\mathbb{E}^*(W_{na}')^j=\mathbb{E}^*(W_n)^j+o(n^{-(s-2)/2}),\quad \mathbb{E}[\mathbb{E}^*(W_n)^j]=\mathbb{E}(W_n)^j
		\end{align*}
		and
		\begin{align*}
			&\mathbb{E}(W_n')^j=\mathbb{E}(W_n)^j+o(n^{-(s-2)/2}).
		\end{align*}
		Hence, we obtain
		\begin{align*}
			\mathbb{E}[\mathbb{E}^*(W_{na}')^j]=\mathbb{E}(W_n')^j+o(n^{-(s-2)/2}).
		\end{align*}
		Let $\tilde\kappa^*_{j,n}$ be the $j$-th conditional cumulant of $W_{na}'$ and $\kappa^*_{j,n}$ be the $j$-th conditional cumulant of $W_{n}'$. Then, we obtain 
		\[\mathbb{E}\tilde\kappa^*_{j,n}=\mathbb{E} \kappa^*_{j,n}+o(n^{-(s-2)/2}),\]
		where $\mathbb{E}\tilde \kappa^*_{j,n}=\sum_{i=1}^{s-2}n^{-i/2}b_{j,i}+o(n^{-(s-2)/2})$ when $ j\neq 2$, while
		\[\mathbb{E}\tilde \kappa^*_{j,n}=\sigma^2+\sum_{i=1}^{s-2}n^{-i/2}b_{2,i}+o(n^{-(s-2)/2})\]
		when $ j=2$. Here, $b_{j,i}$ depend only on appropriate moments of $\Z_1$ and derivatives of $H$ at $\bmmu$ of orders $s-1$ and less. The expression
		\begin{equation*}
			\exp\Big(it\tilde\kappa^*_{1,n}+\frac{(it)^2}{2}\tilde\kappa^*_{2,n}+\sum_{j=3}^{s}\frac{(it)^j}{j!}\tilde\kappa^*_{j,n}\Big)
		\end{equation*}
		is an approximation of the conditional characteristic function of $W_{na}'$. Namely, we obtain an approximation of the conditional characteristic function of $W_n$ by appropriate conditional moments of $\Z_1$ and derivatives of $H$ at $\bmmu$ of orders $s-1$ and less.
		Notice that
		\begin{align*}
			&\mathbb{E}\exp\Big(it\tilde\kappa^*_{1,n}+\frac{(it)^2}{2}\tilde\kappa^*_{2,n}+\sum_{j=3}^{s}\frac{(it)^j}{j!}\tilde\kappa^*_{j,n}\Big)\\
			&=\exp\Big(it\tilde\kappa_{1,n}+\frac{(it)^2}{2}(\tilde\kappa_{2,n}-\sigma^2)+\sum_{j=3}^{s}\frac{(it)^j}{j!}\tilde\kappa_{j,n}\Big)\exp(-\sigma^2t^2/2)+o(n^{-(s-2)/2}).
		\end{align*}
		Then, repeating the process from \eqref{Taylor expansion} to \eqref{fourierexpansion}, we obtain the formal Edgeworth expansion expression $P_{ra}$ of the conditional distribution of the random vector $W_{na}'$. The expansion $P_{ra}$ is related to the conditional moments of $\Z_1$ of orders not greater than $j+2$. Furthermore, utilizing the relationship between moments and cumulants, we derive
		\begin{align*}
			\mathbb{E}P_{ra}=P_r+o(n^{-(s-2)/2}).
		\end{align*}
		Therefore, we obtain the result.
	\end{proof}

	Lemma \ref{P_ra} provides a direct link between the Edgeworth expansion of the conditional probability distribution of $W_n'$ and the Edgeworth expansion of its probability distribution. To establish the conclusion, it suffices to derive an upper bound for the following expression:
	\[\mathbb{E} \int  f_{a_n}\,d\Big(Q_n''-\sum_{r=0}^{s-2}n^{-r/2}P_{ra}(-\Phi_{0, B_n}:\{\tilde\bmchi_{\bfv}\})\Big).\]
	In other words, if we define
	\[H_n=Q_n''-\sum_{r=0}^{s+k-2}n^{-r/2}P_{ra}(-\Phi_{0, B_n}:\{\tilde\bmchi_{\bfv}\})\]
	as a new signed measure, we only need to estimate that the bound of $\mathbb{E}\int f_{a_n}\, dH_n$ is of order $o(n^{-(s-2)/2})$. 
	Next, by Lemma \ref{lemma:11.2}, we can obtain the upper bound of $E\int f_{a_n}\, dH_n$:
	\begin{align*}
		\Big| \mathbb{E}\int f_{a_n}\,dH_n\Big|\le &M_{s'}(f)\mathbb{E}\bigg(\int[1+(\|x\|+\epsilon+\|a_n\|)^{s'}]|H_n*K_{\epsilon}|\,dx\\
		&\qquad+\bar\omega_{f_{a_n}}\Big(2\epsilon:\Big|\sum_{r=0}^{s+k-2}n^{-r/2}P_r(-\Phi_{0,B_n}:\{\tilde\bmchi_{\bfv}\})\Big|\Big)\bigg)\quad (\epsilon>0),
	\end{align*} 
	where we choose the probability measure $K_{\epsilon}$ to satisfy
	\[K_{\epsilon}(\{x:\|x\|<\epsilon\})=1,\]
	\[D^\alpha\hat K_{\epsilon}(t)\le C\epsilon^{|\alpha|}\exp(-(\epsilon\|t\|)^{1/2})\quad (t\in R^a,\, \|\alpha\|\le s+a+1).\]
	And $\hat K_{\epsilon}$ is the Fourier-Stieltjes transform of $K_{\epsilon}$. This is possible by Corollary 10.4 of \cite{bhattacharya2010normal}.
	
	Next, the proof can be divided into two parts. The first part of the proof is to estimate the bound of the moments of $H(x)$ after smoothing it with $K_{\epsilon}$. The second part of the proof is to estimate the average modulus of oscillation $\bar\omega_{f_{a_n}}$.
	
	$\mathbf{Step~1.}$~ According to the result of Lemma \ref{lemma:11.6}, we derive the following bound:
	\begin{align}\label{1.1.1}
		\mathbb{E}\int[1+(\|\x\|+\epsilon+\|a_n\|)^{s'}]|H_n*K_{\epsilon}|\,d\x\le C\max_{0\le|\beta|\le a+s+1}\mathbb{E}\int|D^\beta(\hat H_n\hat K_{\epsilon})(\bm{t})|d\bm{t},
	\end{align}
	where $\hat H_n$ is the Fourier-Stieltjes transform of $H_n$. Additionally, according to Leibniz's rule for differentiation, if $\alpha\in N^a$ and $\beta\in N^a$, we rewrite: 
	\[D^\beta(\hat H_n\hat K_{\epsilon})=\sum_{0\le\alpha\le\beta}C(D^{\beta-\alpha}\hat H_n)(D^\alpha\hat K_{\epsilon}).\]

	Write $c_n=n^{1/2}/(16\rho_3)$, then we continue to calculate the remaining integral. According to Lemma \ref{lemma: 14.1}, Lemma \ref{lemma: 7.2} and Lemma \ref{An}, we obtain:
	\[A_n\ge\frac{C(s,a)n^{(1/2)(s-2)/(s+a-1)}}{\rho_s^{1/(s+a-1)}},\quad c_n\ge A_n.\]
	Specific proof can be found in Appendix \ref{appdx3}. Next, using Lemma \ref{lemma: 14.4} and the relationship $P_r(-\phi_{0,\V}:\{\bm\chi_\bfv\})=\tilde P_r(-\bfD:\{\bm\chi_\bfv\})\phi_{0,\V}$, we obtain:
	\begin{align*}
		\mathbb{E}\int_{\{\|\bm{t}\|>A_n\}}&|D^{\beta-\alpha}\hat H_n(\bm{t}) D^\alpha\hat K_{\epsilon}(\bm{t})|d\bm{t}\le I_1+I_2+I_3,
	\end{align*}
	where the last sum corresponds to the decomposition of the last integral over $\{\|\bm{t}\|>A_n\}$ into two parts: the integral for $\{\|\bm{t}\|>c_n\}$ and $\{A_n<\|\bm{t}\|\le c_n\}$. Additionally, we split $\hat H_n(t)$ into two parts, i.e.:
	\begin{align}
		I_1&\overset{\Delta}{=}E\int_{\{\|\bm{t}\|>c_n\}}|D^{\beta-\alpha}\hat Q'_n(\bm{t}) D^\alpha\hat K_{\epsilon}(\bm{t})|d\bm{t},\\
		\label{1.1.4}I_2&\overset{\Delta}{=}\int_{\{c_n\ge\|\bm{t}\|>A_n\}}C\Big(1+\|\bm{t}\|^{|\beta-\alpha|}\Big)\exp\Big(-\frac{5}{24}\|\bm{t}\|^2\Big)d\bm{t},\\
		\label{1.1.5} I_3&\overset{\Delta}{=}\int_{\{\|\bm{t}\|>A_n\}}\Big|D^{\beta-\alpha}\sum_{r=0}^{s+k-2}n^{-r/2}\tilde P_{ra}(i\bm{t}:\{\bm\chi_{v,n}\})\exp\Big(\frac 12 \langle \bm{t}, D_n\bm{t}\rangle\Big)\Big|d\bm{t}.
	\end{align}
	Due to the presence of the exponential term, the right-hand side of equations \eqref{1.1.4} and \eqref{1.1.5} approaches zero exponentially fast as $n$ goes to infinity. In other words, we prove that: 
	\[I_2=o(n^{-(s-2)/2}),\qquad I_3=o(n^{-(s-2)/2}).\]
	Therefore, we only need to estimate the bound of $I_1$, where we will apply the general partial Cram\'{e}r's condition (GPCC).

	By applying Leibniz's rule for the differentiation of the product of $n$ functions, we obtain
	\begin{equation}\label{differentiation}
		|D^{\beta-\alpha}\hat Q_n''(\bm{t})|\le n^{|\beta-\alpha|}\mathbb{E}^*\bigg\|\frac{\tilde \Z_{1}}{n^{1/2}}\bigg\|^{|\beta-\alpha|}|g_n(\bm{t})|^{n-|\beta-\alpha|},
	\end{equation}
	where
	\[g_n(\bm{t})=\mathbb{E}^*\Big(\exp\big[i\langle n^{-1/2}\bm{t}, \tilde \Z_{1}\rangle\big]\Big).\]
	
	We are now left to verify that if the conditional characteristic functions $v_Z^*(\bm{t})$ of $\Z_{1}$ satisfy the GPCC, then the conditional characteristic function $g_n(\bm{t})$ of the truncated and centered vectors $\tilde \Z_{1}$ also satisfy the GPCC. The next lemma demonstrates this.
	\begin{lemma}\label{condition}
		For all integers $n\ge 1$ and for all $\bm{t}\in R^k$, we have:
		\[|g_n(\bm{t})|\le|v_\Z^*(\bm{t})|+\frac{2\rho_s}{n^{s/2}}.\]
		In particular, under the hypothesis of Theorem \ref{distance}, there  exists $\eta>0$ such that we have the local general partial Cram\'{e}r's condition (GPCC):
		\[\limsup_{n\to\infty}\mathbb{E}|v_\Z^*(\bm{t})|\le 1-\eta.\]
	\end{lemma}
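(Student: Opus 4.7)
The plan is to handle the two assertions separately, deriving the pointwise bound first and then deducing the ``local'' GPCC by combining that bound with the hypothesis of Theorem~\ref{distance}. The key observation is that passing from $\Z_1$ to the truncated-and-centered vector $\tilde\Z_{1}$ affects the modulus of the characteristic function only through truncation, since centering multiplies by a complex factor of modulus one.

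First, I would write out the centering identity. By definition $\tilde\Z_{1}=\hat\Z_{1}-\mathbb{E}^{*}\hat\Z_{1}$, so
\[
g_n(\bm t)=\mathbb{E}^{*}\!\bigl[\exp(i\langle n^{-1/2}\bm t,\tilde\Z_{1}\rangle)\bigr]
=\exp\!\bigl(-i\langle n^{-1/2}\bm t,\mathbb{E}^{*}\hat\Z_{1}\rangle\bigr)\,\hat v_{\Z}^{*}(n^{-1/2}\bm t),
\]
where $\hat v_{\Z}^{*}$ denotes the conditional characteristic function of $\hat\Z_{1}$. Taking moduli kills the exponential prefactor, so $|g_n(\bm t)|=|\hat v_{\Z}^{*}(n^{-1/2}\bm t)|$. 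Hence the problem reduces to controlling the effect of truncation alone.

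Second, the truncation estimate is an almost immediate Markov-type calculation. Since $\hat\Z_{1}=\Z_1$ on $\{\|\Z_1\|\le n^{1/2}\}$ and $\hat\Z_{1}=0$ otherwise, we can write
\[
v_\Z^{*}(\bm t)-\hat v_\Z^{*}(\bm t)=\mathbb{E}^{*}\!\Bigl[\bigl(e^{i\langle\bm t,\Z_1\rangle}-1\bigr)\mathbf{1}\{\|\Z_1\|>n^{1/2}\}\Bigr],
\]
whose modulus is at most $2\mathbb P^{*}(\|\Z_1\|>n^{1/2})\le 2n^{-s/2}\mathbb{E}^{*}\|\Z_1\|^{s}$ by Markov. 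This, combined with Step~1 and the triangle inequality, yields the pointwise bound
\[
|g_n(\bm t)|\le |v_\Z^{*}(\bm t)|+2\rho_s/n^{s/2}.
\]

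Third, for the local GPCC, I would take unconditional expectation on both sides of the pointwise bound and then let $n\to\infty$. The second term is $O(n^{-s/2})$ and vanishes, while the finite $s$th absolute moment assumption of Theorem~\ref{distance} guarantees it is well defined. The conditional characteristic function $v_\Z^{*}(\bm t)$, viewed as a function of the conditioning coordinates $Z_{1(a+1)},\dots,Z_{1k}$, coincides (up to a unimodular multiplier depending on those remaining coordinates, which again disappears after taking modulus) with the quantity $v_a(\bm t_a)$ appearing in Definition~\ref{DefGPCC}. Invoking the GPCC hypothesis (A3), there exist $R>0$ and $\eta>0$ with $\mathbb{E}|v_a(\bm t_a)|\le 1-2\eta$ for $\|\bm t_a\|\ge R$, which translates into the desired bound $\limsup\mathbb{E}|v_\Z^{*}(\bm t)|\le 1-\eta$ in the relevant regime of $\bm t$.

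The main subtle point, and where care is needed, is the interplay between the truncation scale $n^{1/2}$, the scaling $n^{-1/2}\bm t$ inside $g_n$, and the range of $\bm t$ (namely $\|\bm t\|>c_n$) on which this bound will actually be applied later in the proof of Theorem~\ref{distance}. One must check that $n^{-1/2}\bm t$ still lies in the region where GPCC supplies a uniform gap $\eta$, so that raising $|g_n(\bm t)|$ to the power $n$ in \eqref{differentiation} produces the exponential decay needed to dominate $I_1$. The pointwise bound of Step~2 is clean, but verifying that the averaging over the conditioning $\sigma$-algebra preserves the uniform gap $\eta$ (rather than being lost to a set of small but nonzero measure) is the most delicate accounting step.
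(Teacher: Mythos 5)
Your proposal is correct and follows essentially the same route as the paper's proof: the centering term contributes only a unimodular factor that disappears under the modulus, the truncation error is the conditional expectation of $(e^{i\langle\bm t,\Z_1\rangle}-1)\mathbf{1}\{\|\Z_1\|>n^{1/2}\}$ bounded by $2\mathbb{P}^*(\|\Z_1\|>n^{1/2})\le 2\rho_s n^{-s/2}$ via Markov's inequality, and the limsup statement then follows from the GPCC hypothesis. If anything, you are slightly more careful than the paper, which silently suppresses both the unimodular centering factor and the $n^{-1/2}$ rescaling of the argument inside $g_n$; your closing remark about tracking that rescaling against the regime $\|\bm t\|>c_n$ is exactly the bookkeeping the paper leaves implicit.
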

	\begin{proof}
		Observe that
		\begin{align*}
			\mathbb{E}|g_n(\bm{t})|&=\mathbb{E}|\mathbb{E}^*(\exp(it\tilde \Z_{1}))|=\mathbb{E}\Big|\mathbb{E}^*\Big[\exp\big(it\Z_{1}I_{\{\|\Z_1\|\le\sqrt n\}}\big)\Big]\Big|\\
			&=\mathbb{E}\Big|\mathbb{E}^*\Big[\exp(it\Z_{1})I_{\{\|\Z_{1}\|\le\sqrt n\}}\Big]+\mathbb{E}^*\Big[I_{\{\|\Z_{1}\|>\sqrt n\}}\Big]\Big|\\
			&=\mathbb{E}\Big|\mathbb{E}^*\Big[\exp(it\Z_{1})\Big]-\mathbb{E}^*\Big[(\exp(it\Z_{1})-1)I_{\{\|\Z_{1}\|>\sqrt n\}}\Big]\Big|.
		\end{align*}
		By definition $v_\Z^*(\bm{t})=\mathbb{E}^*(\exp(i\bm{t}\Z_{1}))$, we then have
		\begin{align*}
			\mathbb{E}|g_n(\bm{t})|-\mathbb{E}|v_\Z^*(\bm{t})|&\le \mathbb{E}\Big|\mathbb{E}^*\Big[(\exp(it\Z_{1})-1)I_{\{\|\Z_{1}\|>\sqrt n\}}\Big]\Big|\\[2mm]
			&\le \mathbb{E}\Big [2\mathbb{E}^*\Big(I_{\{\|\Z_{1}\|>\sqrt n\}}\Big)\Big]\\[2mm]
			&\le\mathbb{E} \frac{2\mathbb{E}^*[\|\Z_{1}\|^s]}{n^{s/2}}.
		\end{align*}
		Therefore, we obtain 
		\[\limsup_{n\to\infty}\mathbb{E}|g_n(\bm{t})|\le \limsup_{n\to\infty}\Big[\mathbb{E}|v_Z^*(\bm{t})|+\mathbb{E}\frac{2\mathbb{E}^*[\|\Z_{i}\|^s]}{n^{s/2}}\Big]\le 1-\eta.\]
		Therefore, the conclusion holds.
	\end{proof}
	
	Let us return to the proof of Theorem \ref{distance} and continue to evaluate the integral $I_1$. Therefore, by Lemma \ref{condition}, we obtain:
	\[\sup_{\|\bm{t}\|>c_n}\mathbb{E}|g_n(\bm{t})|<\theta<1\]
	for all sufficiently large $n$. Here $\theta$ is a number independent of $n$. Hence, by the above equation \eqref{differentiation}, we obtain a specific estimate of $I_1$ that we aim to control under GPCC,
	\begin{align*}
		I_1&=\mathbb{E}\int_{\{\|\bm{t}\|>c_n\}}\|D^{\beta-\alpha}\hat Q''_n(t)D^{\alpha}\hat K_{\epsilon}(\bm{t})\|d\bm{t}\\
		&\le C \epsilon^{|\alpha|}n^{|\beta-\alpha|}\theta^{n-|\beta-\alpha|}\int_{\{\|t\|>n^{1/2}/16\rho_3\}}\exp\big(-(\epsilon\|\bm{t}\|)^{1/2}\big)d\bm{t}\\
		&\le C n^{|\beta-\alpha|}\theta^{n-|\beta-\alpha|}\epsilon^{|\alpha|-k}\int\exp\big(-\|\bm{t}\|^{1/2}\big)d\bm{t}\\
		&\le Cn^{s+k+1}\theta^n\epsilon^{-k}
	\end{align*}
	for all large $n$. Then, we can choose $\epsilon=e^{-dn}$ and $d$ is any positive number satisfying $d<-\frac{1}{k}\log\theta$, so that we can provide an upper bound for the integral term $I_1$, i.e.:
	\[I_1=o(n^{-(s-2)/2})\qquad (n\to\infty).\]
	
	Therefore, we have demonstrated:
	\begin{align}\label{1.1.12}
		\mathbb{E}\int_{\{\|\bm{t}\|>A_n\}}&|D^{\beta-\alpha}\hat H_n(\bm{t}) D^\alpha\hat K_{\epsilon}(\bm{t})|d\bm{t}\le o(n^{-(s-2)/2}) \qquad (n\to\infty).
	\end{align}
	
	The remainder of the proof is provided in the Section \ref{A.1}.
	
	\section{Proofs of main results}\label{appdx}
	
	\subsection{Proof of Theorem \ref{distance}}\label{A.1}
	Assume $P^*$ is a conditional probability given $C_n$. Let $\bm{\mu}_{s}$ be the $s$-th moment of $\textbf{Z}_j$, $\bm\rho_{s}$ be the $s$-th  absolute moment of $\Z_j $ and $\bm\chi_{s}$ be the $s$-th cumulant of $\Z_j(1\le j\le n)$. Namely, we write
	\[\bm\rho_{s}=\mathbb{E}\Vert \Z_j\Vert^s, \quad \bm\mu_{s}=\mathbb{E}\Z_{j}^{s},\quad (1\le j\le n).\]
	Define truncated random vectors
	\begin{equation*}
		\hat \Z_{j}=\left\{
		\begin{aligned}
			\Z_j &, &\|\Z_j\|\le n^{1/2}\\
			0 &, &\|\Z_j\|>n^{1/2},
		\end{aligned}
		\right.
		\quad \tilde \Z_{j}=\hat \Z_{j}-\mathbb{E}\hat \Z_{j}\quad (1\le j\le n).
	\end{equation*}
	And then let $\hat\bmmu_{s,j}$ be the $s$-th moment of $\hat \Z_j$, $\hat\bmrho_{s,j}$ be the $s$-th  absolute moment of $\hat \Z_j $ and $\hat\bmchi_{s,j}$ be the $s$-th cumulant of $\hat \Z_j (1\le j\le n)$. Besides, let $\tilde\bmmu_{s,j}$ be the $s$-th moment of $\tilde \Z_j$, $\tilde\bmrho_{s,j}$ be the $s$-th  absolute moment of $\tilde \Z_j $ and $\tilde\bmchi_{s,j}$ be the $s$-th cumulant of $\tilde \Z_j (1\le j\le n)$. Namely, write
	\[\hat\bmmu_{s,j}=\mathbb{E}\hat \Z_{j}^{s},\quad\hat\bmrho_{s,j}=\mathbb{E}\Vert \hat\Z_j\Vert^s,\quad \tilde\bmmu_{s,j}=\mathbb{E}\tilde \Z_{j}^{s},\quad\tilde\bmrho_{s,j}=\mathbb{E}\Vert \tilde\Z_j\Vert^s.\]
	Also introduce
	\[\Delta_{n,j,s}=\int_{\{\Vert \Z_j\Vert>n^{1/2}\}}\Vert \Z_j\Vert^s,\quad \bar\Delta_{n,s}=n^{-1}\sum_{j=1}^n\Delta_{n,j,s},\]
	\[\bar \Delta_{n,s}(\epsilon)=n^{-1}\sum_{j=1}^n\int_{\{\|\Z_j\|>\epsilon n^{1/2}\}}\|\Z_j\|^s\quad (\epsilon>0).\]
	Finally, let $B_n$ be the common covariance matrix of $\hat \Z_{1}$ and $\tilde \Z_{1}$. The symbol $*$ denotes convolution operation. Furthermore, we define the norm of a $k\times k$ matrix $T$, specifically,
	\[B_n=Cov(\hat \Z_{1})=Cov(\tilde \Z_{1}),\quad \|T\|=\sup_{x\in R^k,~\|x\|\le 1}\|Tx\|.\]
	
	Before giving the proof of Theorem \ref{distance}, let us state and prove three auxiliary lemmas. 
	
	\begin{lemma}\label{L1.1}
		Let $V=I$. Assume $Q_n^*$ is the conditional distribution of $n^{-1/2}(\Z_1+\dots+\Z_n)$ given $C_n$ and $Q_n'$ is the conditional distribution of $n^{-1/2}(\hat \Z_1+\dots+\hat \Z_n)$ given $C_n$. If $\bmrho_s<\infty$ for some $s>0$, then there exists a positive constant $c_1(s,k)$ such that
		\begin{equation}\label{1.1}
			\mathbb{E}\Vert Q_n^*-Q_n^{\prime}\Vert\le c_1(s,k)\bar\Delta_{n,s}n^{-(s-2)/2}.
		\end{equation}
		Also, there exist two positive constants $c_2(s,k), c_3(s,k)$ such that whenever
		\[\bar\Delta_{n,s}\Big(\frac 23\Big)\le c_2(s,k)n^{(s-2)/2}\]
		for some integer $s\ge 2$,
		\begin{equation}\label{1.2}
			\mathbb{E}\int \Vert x\Vert^r\vert Q_n^*-Q_n'\vert(dx)\le c_3(s,k)\bar\Delta_{n,s}n^{-(s-2)/2}
		\end{equation}
		for all $r\in (0,s]$.
	\end{lemma}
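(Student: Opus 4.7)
The plan is to exploit the elementary observation that $Q_n^{*}$ and $Q_n'$ agree on the event $A_n=\bigcap_{j=1}^{n}\{\|\Z_j\|\le n^{1/2}\}$ where no truncation occurs, so the signed measure $Q_n^{*}-Q_n'$ is supported on $A_n^{c}$. Both inequalities then reduce to estimating quantities on $A_n^{c}$ with the help of the finite $s$-th moment hypothesis.

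For inequality \eqref{1.1}, I would bound the total variation by $\|Q_n^{*}-Q_n'\|\le 2P^{*}(A_n^{c})$, apply a union bound, and then use the conditional Markov inequality at exponent $s$ to get
\begin{equation*}
P^{*}(A_n^{c})\le\sum_{j=1}^{n}P^{*}(\|\Z_j\|>n^{1/2})\le n^{-s/2}\sum_{j=1}^{n}\mathbb{E}^{*}\bigl[\|\Z_j\|^{s}\mathbf{1}_{\{\|\Z_j\|>n^{1/2}\}}\bigr].
\end{equation*}
Taking unconditional expectation collapses the inner integrals into $\Delta_{n,j,s}$ and the sum into $n\bar\Delta_{n,s}$, which yields exactly $2\bar\Delta_{n,s}n^{-(s-2)/2}$ and the constant $c_{1}(s,k)=2$.

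For inequality \eqref{1.2}, I would write $S_n=n^{-1/2}\sum_{j}\Z_j$, $\hat S_n=n^{-1/2}\sum_{j}\hat\Z_j$, and split
\begin{equation*}
\int\|x\|^{r}\,|Q_n^{*}-Q_n'|(dx)\le\mathbb{E}^{*}\bigl[\|S_n\|^{r}\mathbf{1}_{A_n^{c}}\bigr]+\mathbb{E}^{*}\bigl[\|\hat S_n\|^{r}\mathbf{1}_{A_n^{c}}\bigr].
\end{equation*}
Apply H\"older's inequality with conjugate exponents $p,q$ chosen so that $rq\le s$; the moment factor $(\mathbb{E}^{*}\|S_n\|^{rq})^{1/q}$ is then controlled by a conditional Rosenthal-type inequality using only the $s$-th absolute moment, while the probability factor $P^{*}(A_n^{c})^{1/p}$ is handled as in the previous step. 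To avoid losing the right rate, one refines $A_n^{c}$ into the sub-events $B_m=\{\text{exactly }m\text{ indices }j\text{ satisfy }\|\Z_j\|>n^{1/2}\}$ for $m\ge 1$, and on each $B_m$ separates the contribution of the $m$ exceeding summands from that of the truncated sum.

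The main obstacle is precisely the sharp rate in \eqref{1.2}: a naive global H\"older estimate on $A_n^{c}$ typically loses a factor of $n^{1/2}$. This is exactly where the side hypothesis $\bar\Delta_{n,s}(2/3)\le c_{2}(s,k)n^{(s-2)/2}$ enters. It guarantees that with overwhelming conditional probability all the $\Z_j$ lie inside a ball of radius $(2/3)n^{1/2}$, so that $\hat S_n$ has moments of every order up to $s$ bounded uniformly in $n$ (via a Rosenthal-type bound with truncated summands), and the residual contribution from $B_{1}$ is of order $\bar\Delta_{n,s}n^{-(s-2)/2}$. The events $\bigcup_{m\ge 2}B_m$ have conditional probability $O(\bar\Delta_{n,1,s}^{2}\,n^{-s+1})$, which is already negligible compared to the target rate and can be absorbed into the constant $c_{3}(s,k)$.
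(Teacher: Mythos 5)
Your argument is correct in outline but follows a genuinely different route from the paper. For \eqref{1.1} the two proofs are essentially the same bound in different clothing: the paper telescopes the convolution $Q_n^*-Q_n'=\sum_j G_1*\cdots*(G_j-G_j')*\cdots*G_n'$ and uses $\Vert G_j-G_j'\Vert=2P^*(\Vert\Z_j\Vert>n^{1/2})$, which is exactly your coupling-plus-union bound, and both land on the constant $2$. The real divergence is in \eqref{1.2}. The paper keeps the telescoping sum, so that each of the $n$ terms automatically carries one small factor ($\Vert G_j-G_j'\Vert$ or $\int\Vert v\Vert^s\vert G_j-G_j'\vert$, each $O(n^{-s/2}\Delta_{n,j,s})$) multiplied by a uniformly bounded $s$-th moment of the remaining mixed convolution $G_1*\cdots*G_{j-1}*G_{j+1}'*\cdots*G_n'$; the sharp rate then falls out of summing over $j$, and the hypothesis $\bar\Delta_{n,s}(2/3)\le c_2 n^{(s-2)/2}$ enters only through the Bhattacharya--Rao moment inequality that gives that uniform bound. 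You instead work on the probability space and must recover the lost factor by stratifying $A_n^c$ into the events $B_m$; your $m=1$ analysis is exactly right and reproduces the paper's rate. One caution: for $\bigcup_{m\ge 2}B_m$ the conditional probability bound $O(n^{-(s-2)})$ alone does not finish the job, because the integrand $\Vert S_n\Vert^r$ is unbounded there; you do need to carry out the separation of the $m$ exceeding summands that you mention, together with the conditional independence of the $\Z_j$ given $C_n$ (which holds here because $C_n$ only records components of each $\Z_j$ separately), so that the expectation factorizes into $m$ small factors of the form $n^{-s/2}\Delta_{n,j,s}$ and a bounded moment of the remaining truncated sum. With that carried out, the $m\ge 2$ contribution is $O\big((n^{-(s-2)/2}\bar\Delta_{n,s})^2\big)$ and is indeed absorbable. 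Your approach is more elementary and self-contained; the paper's convolution identity is slicker and avoids the multiple-exceedance bookkeeping entirely.
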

	\begin{proof}
		Let $G_j$ be the conditional distribution of $n^{-1/2}\Z_j$ given $C_n$ and $G_j'$ be the conditional distribution of $n^{-1/2}\hat \Z_j$ given $C_n$, $1\le j \le n$. Then
		\[Q_n^*=G_1*G_2*\dots* G_n, \qquad Q_n'=G_1'*G_2'*\dots * G_n',\]
		and
		\begin{align*}
			\mathbb{E}\|Q_n^*-Q_n'\|&=\mathbb{E}\Big\|\sum_{j=1}^n G_1*\dots*G_{j-1}*(G_j-G_j')*G_{j+1}'*\dots*G_n'\Big\|\\
			&\le \mathbb{E}\sum_{j=1}^n\|G_j-G_j'\|=2\mathbb{E}\sum_{j=1}^nP^*(\|\Z_j\|>n^{1/2})\\
			&\le 2\sum_{j=1}^nn^{-s/2}\mathbb{E}\int_{\{\|\Z_j\|>n^{1/2}\}}\|\Z_j\|^s\,dP^*=2\bar \Delta_{n,s}n^{-(s-2)/2}.
		\end{align*}
		Therefore, we complete the proof for \eqref{1.1}. Next, we shall prove the bound \eqref{1.2}. Assume that $s$ is an integer and $s\ge 2$. Since $\|\x\|^r\le 1+\|\x\|^s$ for $1\le r\le s$, it is enough to prove bound \eqref{1.2} for the case of $r=s$. 
		Therefore, we only need to give the proof for $r=s$. 
		
		First, by utilizing the definition of convolution and the properties of probability distributions, we establish an upper bound for $\mathbb{E}\int \|\x\|^s|Q_n-Q'_n|(dx)$ :
		\begin{align}\label{1.3}
			&\qquad\mathbb{E}\int \|\x\|^s|Q_n-Q'_n|(d\x)\\\nonumber
			&\le\sum_{j=1}^n \mathbb{E}\int\|\x\|^s\big|G_1*\dots*G_{j-1}*(G_j-G'_j)*G'_{j+1}*\dots*G'_n\big|(d\x)\\\nonumber
			&\le\sum_{j=1}^n \mathbb{E}\int\bigg(\int\|\bm{u}+\bm{v}\|^s|G_j-G'_j|(d\bm{v})\bigg)G_1*\dots*G_{j-1}*G'_{j+1}*\dots*G'_n(d\bm{u})\\\nonumber
			&\le2^{s-1}\sum_{j=1}^n \mathbb{E}\bigg(\|G_j-G'_j\|\int\|\bm{u}\|^sG_1*\dots*G_{j-1}*G'_{j+1}*\dots*G'_n(d\bm{u})\\\nonumber
			&\qquad\qquad\qquad\quad+\int\|\bm{v}\|^s|G_j-G'_j|(d\bm{v})\bigg).
		\end{align}
		
		Observe that the bound for the second half of inequality \eqref{1.3} can be obtained directly as
		\begin{equation}\label{1.4}
			\mathbb{E}\int\|\bm{v}\|^s|G_j-G'_j|(d\bm{v})=\mathbb{E}\int_{\{\|\Z_j\|>n^{1/2}\}}\|n^{-1/2}\Z_j\|^sdP^*=n^{-s/2}\Delta_{n,j,s}.
		\end{equation}
		Therefore, it suffices to estimate the bound of $E\int\|u\|^sG_1*\dots*G_{j-1}*G'_{j+1}*\dots*G'_n(du)$. By applying the double expectation theorem, we obtain
		\begin{align}\label{1.5}
			&\qquad\mathbb{E}\int\|\bm{u}\|^sG_1*\dots*G_{j-1}*G'_{j+1}*\dots*G'_n(d\bm{u})\\\nonumber
			&=\mathbb{E}\mathbb{E}^*\|n^{-1/2}(\Z_1+\dots+\Z_{j-1}+\hat \Z_{j+1}+\dots+\hat \Z_n)\|^s\\\nonumber
			&=\mathbb{E}\|n^{-1/2}(\Z_1+\dots+\Z_{j-1}+\hat \Z_{j+1}+\dots+\hat \Z_n)\|^s\\\nonumber
			&\le 2^{s-1}\bigg(\mathbb{E}\|n^{-1/2}(\Z_1+\dots+\Z_j+\hat \Z_{j+1}+\dots+\hat \Z_n)\|^s+\mathbb{E}\|n^{-1/2}\Z_j\|^s\bigg)\\\nonumber
			&\le 2^{2(s-1)}\bigg(\mathbb{E}\|n^{-1/2}(\Z_1+\dots+\Z_j+\tilde \Z_{j+1}+\dots+\tilde \Z_n)\|^s\\\nonumber
			&\qquad\qquad\qquad+\|n^{-1/2}(\mathbb{E}\hat \Z_{j+1}+\dots+\mathbb{E}\hat \Z_n)\|^s\bigg)+2^{s-1}\mathbb{E}\|n^{-1/2}\Z_j\|^s.
		\end{align}
		Thus, relying on Lemma \ref{lemma: 14.1} and the definition of $\hat \Z_j$, we arrive at the conclusion of the bound:
		\[\mathbb{E}\|n^{-1/2}\Z_j\|^s\le n^{-s/2}(n^{s/2}+\Delta_{n,j,s})\le1+n^{-(s-2)/2}\bar \Delta_{n,s},\]
		\begin{align*}
			\|n^{-1/2}(\mathbb{E}\hat \Z_{j+1}+\dots+\mathbb{E}\hat \Z_n)\|^s\le\Big(a^{1/2}n^{-(s-2)/2}\bar \Delta_{n,s}\Big)^s.
		\end{align*}
		Following the same methodology as presented in  \cite{bhattacharya2010normal}, we derive the bound:
		\[\mathbb{E}\|n^{-1/2}(\Z_1+\dots+\Z_j+\tilde \Z_{j+1}+\dots+\tilde \Z_n)\|^s\le c(s,k).\]
		Therefore, by utilizing the estimates in \eqref{1.3}, \eqref{1.4}, and \eqref{1.5}, we derive
		\[\mathbb{E}\int \Vert x\Vert^r\vert Q_n-Q_n'\vert(dx)\le c_3(s,k)\bar\Delta_{n,s}n^{-(s-2)/2}.\]
		
		Hence, the conclusion holds.
	\end{proof}

	\begin{lemma}\label{L1.2}
		Assume $\Z_1,\dots, \Z_n$ are n independent random vectors with values in $R^k$ having zero means. Define truncated random vectors
		\begin{equation*}
			\hat \Z_{j}=\left\{
			\begin{aligned}
				\Z_j &, &\|\Z_j\|\le n^{\frac 12},\\
				0 &, &\|\Z_j\|>n^{\frac12},
			\end{aligned}
			\right.
			\quad \tilde \Z_{j}=\hat \Z_{j}-\mathbb{E}\hat \Z_{j}\quad (1\le j\le n).
		\end{equation*}
		Then one has
		\[\mathbb{E}\|\tilde \Z_{1}\|^{s+k+1}=o(n^{(k+1)/2}).\]
	\end{lemma}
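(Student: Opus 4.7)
The plan is to exploit the hard truncation bound $\|\hat{\Z}_1\|\le n^{1/2}$ together with the fact that the centering $\mathbb{E}\hat{\Z}_1$ vanishes as $n\to\infty$. From the triangle inequality and Jensen, $\|\mathbb{E}\hat{\Z}_1\|\le\mathbb{E}\|\hat{\Z}_1\|\le n^{1/2}$, so $\|\tilde{\Z}_1\|\le 2n^{1/2}$. The naive estimate
\[
\mathbb{E}\|\tilde{\Z}_1\|^{s+k+1}\le (2n^{1/2})^{k+1}\,\mathbb{E}\|\tilde{\Z}_1\|^s \le C\,\bmrho_s\,n^{(k+1)/2}
\]
(using $\mathbb{E}\|\tilde{\Z}_1\|^s\le 2^s\bmrho_s$) only yields the big-$O$ bound. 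To refine this to $o(n^{(k+1)/2})$, I would split the expectation at a threshold well below $n^{1/2}$.

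Fix $\epsilon>0$ and decompose
\[
\mathbb{E}\|\tilde{\Z}_1\|^{s+k+1}=\mathbb{E}\bigl[\|\tilde{\Z}_1\|^{s+k+1}\mathbf{1}_{\{\|\Z_1\|\le\epsilon n^{1/2}\}}\bigr]+\mathbb{E}\bigl[\|\tilde{\Z}_1\|^{s+k+1}\mathbf{1}_{\{\|\Z_1\|>\epsilon n^{1/2}\}}\bigr].
\]
Because $\Z_1$ has mean zero and $\bmrho_s<\infty$,
\[
\|\mathbb{E}\hat{\Z}_1\|=\bigl\|\mathbb{E}[\Z_1\mathbf{1}_{\{\|\Z_1\|>n^{1/2}\}}]\bigr\|\le n^{-(s-1)/2}\bmrho_s\longrightarrow 0,
\]
so for all $n$ large enough $\|\mathbb{E}\hat{\Z}_1\|\le\epsilon n^{1/2}$. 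On the event $\{\|\Z_1\|\le\epsilon n^{1/2}\}$ we then have $\|\tilde{\Z}_1\|\le 2\epsilon n^{1/2}$, and hence
\[
\mathbb{E}\bigl[\|\tilde{\Z}_1\|^{s+k+1}\mathbf{1}_{\{\|\Z_1\|\le\epsilon n^{1/2}\}}\bigr]\le (2\epsilon n^{1/2})^{k+1}\,\mathbb{E}\|\tilde{\Z}_1\|^s \le C\,\bmrho_s\,(2\epsilon)^{k+1}\,n^{(k+1)/2}.
\]

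On the complementary event, I use $\|\tilde{\Z}_1\|\le 2n^{1/2}$ for the surplus $(k+1)$ powers and retain $s$ powers as an integrable tail:
\[
\mathbb{E}\bigl[\|\tilde{\Z}_1\|^{s+k+1}\mathbf{1}_{\{\|\Z_1\|>\epsilon n^{1/2}\}}\bigr]\le (2n^{1/2})^{k+1}\,\mathbb{E}\bigl[\|\tilde{\Z}_1\|^s\mathbf{1}_{\{\|\Z_1\|>\epsilon n^{1/2}\}}\bigr].
\]
Since $\|\tilde{\Z}_1\|^s\le 2^{s-1}(\|\Z_1\|^s+\bmrho_s)$ is integrable and $\mathbf{1}_{\{\|\Z_1\|>\epsilon n^{1/2}\}}\to 0$ almost surely, the residual expectation tends to $0$ by dominated convergence, so this piece is $o(n^{(k+1)/2})$. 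Combining the two pieces,
\[
\limsup_{n\to\infty}n^{-(k+1)/2}\,\mathbb{E}\|\tilde{\Z}_1\|^{s+k+1}\le C\,\bmrho_s\,(2\epsilon)^{k+1},
\]
and sending $\epsilon\downarrow 0$ gives the claim.

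The main obstacle is that the crude bound $\|\tilde{\Z}_1\|\le 2n^{1/2}$ is tight in the worst case, so the $o$-improvement cannot come from a pointwise gain; it must be extracted by splitting according to whether $\|\Z_1\|$ is or is not near the truncation radius. The key observation making this work is that the centering $\mathbb{E}\hat{\Z}_1$ decays because $\Z_1$ is centered and integrable, which forces $\|\tilde{\Z}_1\|\approx\|\Z_1\|$ on the typical event, letting the extra $(k+1)$ powers of $\|\tilde{\Z}_1\|$ be dominated by $\epsilon n^{1/2}$ rather than $n^{1/2}$.
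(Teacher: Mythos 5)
Your proof is correct and is essentially the same argument as the paper's: both split the expectation into a region where the norm is well below the truncation radius (so the surplus $k+1$ powers cost only a negligible factor) and a tail region where the finite $s$-th moment forces the remaining integral to vanish by dominated convergence. The only cosmetic differences are that the paper splits at the fixed threshold $n^{1/4}$ (making the first piece $O(n^{(k+1)/4})$ outright, with no $\epsilon\downarrow 0$ step) and handles the centering at the very end via $\mathbb{E}\|\tilde\Z_1\|^{s+k+1}\le 2^{s+k+1}\mathbb{E}\|\hat\Z_1\|^{s+k+1}$, whereas you split at $\epsilon n^{1/2}$ and track the centering term explicitly.
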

	\begin{proof}
		First, based on the definition of $\hat \Z_{1}$, we proceed to calculate: 
		\begin{align*}
			\mathbb{E}\|\hat \Z_{1}\|^{s+k+1}&=\int_{\{0\le\|\hat \Z_{1}\|\le n^{\frac 14}\}}\|\hat \Z_{1}\|^{s+k+1}+\int_{\{n^{\frac 14}\le\|\hat \Z_{1}\|\le n^{\frac 12}\}}\|\hat \Z_{1}\|^{s+k+1}\\[3mm]
			&\le n^{\frac{k+1}{4}}\int_{\{0\le\|\hat \Z_{1}\|\le n^{\frac 14}\}}\|\hat \Z_{1}\|^{s}+n^{\frac{k+1}{2}}\int_{\{n^{\frac 14}\le\|\hat \Z_{1}\|\le n^{\frac 12}\}}\|\hat \Z_{1}\|^s\\[3mm]
			&=o(n^{(k+1)/2}).
		\end{align*}
		Then, by Lemma \ref{lemma: 14.1}, we conclude that
		\[\mathbb{E}\|\tilde \Z_{1}\|^{s+k+1}\le 2^{s+k+1}\mathbb{E}\|\hat \Z_{1}\|^{s+k+1}\le o(n^{(k+1)/2}).\]
		Hence, the conclusion holds.
	\end{proof}
	
	\begin{lemma}\label{L1.3}
		Let $Q_n'$ be the conditional distribution of $n^{-1/2}(\hat \Z_{1}+\dots+\hat \Z_{n})$ given $C_n$, while $Q_n''$ represents the conditional distribution of $n^{-1/2}(\tilde \Z_{1}+\dots+\tilde \Z_{n})$ given $C_n$. Additionally, we define $a_n=n^{1/2}\mathbb{E}\hat \Z_{1}$. Recall that the translate $f_{y}(x)$ of $f(x)$ by $y\in R^k$ is defined by $f_y(x)=f(x+y)$, $x\in R^k$. Then we have
		\[\mathbb{E}\int f\, dQ_n'=\mathbb{E}\int f_{a_n}\,dQ_n''.\]
	\end{lemma}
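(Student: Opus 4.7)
The plan is to observe that the lemma is a direct translation identity: the sum of truncated vectors differs from the sum of centered truncated vectors by a deterministic shift equal to $n \mathbb{E}\hat \Z_{1}$. Concretely, from the definition $\tilde \Z_{j}=\hat \Z_{j}-\mathbb{E}\hat \Z_{j}$ together with the fact that $\Z_{1},\dots,\Z_{n}$ are identically distributed (so $\mathbb{E}\hat \Z_{j}=\mathbb{E}\hat \Z_{1}$ for every $j$), summing and dividing by $n^{1/2}$ gives
\[
n^{-1/2}\sum_{j=1}^{n}\hat \Z_{j}
=n^{-1/2}\sum_{j=1}^{n}\tilde \Z_{j}+n^{1/2}\mathbb{E}\hat \Z_{1}
=n^{-1/2}\sum_{j=1}^{n}\tilde \Z_{j}+a_{n}.
\]

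Working conditionally on $C_{n}$, I would then interpret each side of the desired identity as an expectation. By the definition of $Q_{n}'$ as the conditional distribution of $n^{-1/2}(\hat \Z_{1}+\dots+\hat \Z_{n})$, we have $\int f\, dQ_{n}' = \mathbb{E}^{*}\!\bigl[f\bigl(n^{-1/2}\sum_{j}\hat \Z_{j}\bigr)\bigr]$. Substituting the displayed identity and recognizing $f(x+a_{n})=f_{a_{n}}(x)$ yields
\[
\int f\, dQ_{n}'=\mathbb{E}^{*}\!\Bigl[f_{a_{n}}\bigl(n^{-1/2}\textstyle\sum_{j}\tilde \Z_{j}\bigr)\Bigr]=\int f_{a_{n}}\, dQ_{n}''.
\]
Since $a_{n}$ is deterministic (it depends only on the common distribution of the $\Z_{j}$, not on $C_{n}$), the translated integrand $f_{a_{n}}$ is a legitimate Borel-measurable function and the substitution is valid under the conditional measure. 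Taking the outer expectation over $C_{n}$ on both sides then produces the claim $\mathbb{E}\!\int f\, dQ_{n}'=\mathbb{E}\!\int f_{a_{n}}\, dQ_{n}''$.

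Honestly, there is no significant obstacle here; the lemma is essentially a bookkeeping step that isolates the mean shift $a_{n}$ induced by truncation so that subsequent arguments may work with the centered vectors $\tilde \Z_{j}$ (which have zero mean and hence admit a clean Edgeworth expansion) while absorbing the truncation bias into a translation of the test function $f$. The only point requiring a sentence of care is the measurability and integrability of $f_{a_{n}}$, which is immediate because $a_{n}$ is a fixed (non-random) vector and $f$ is Borel measurable with $M_{s'}(f)<\infty$.
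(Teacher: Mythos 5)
Your proof is correct and follows essentially the same route as the paper: both arguments reduce the claim to the pointwise identity $n^{-1/2}\sum_j \hat\Z_j = n^{-1/2}\sum_j\tilde\Z_j + a_n$ and then transfer the deterministic translation $a_n$ from the measure to the test function (the paper writes the change of variables starting from $Q_n''$ and you start from $Q_n'$, which is immaterial). Your added remark that $a_n$ is non-random, so $f_{a_n}$ is a legitimate Borel test function under the conditional measure, is a harmless and correct refinement of the same bookkeeping step.
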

	\begin{proof}
		According to the definition of $a_n$ and $f_y(x)$, we derive
		\begin{align*}
			\mathbb{E}\int f_{a_n}\,dQ_n''&=\mathbb{E}\int f(x+a_n)dQ_n''(x)=\mathbb{E}\int f(x+n^{1/2}\mathbb{E}\hat \Z_{1})dQ_n''(x)\\
			&=\mathbb{E}\int f(x)dQ_n''(x-n^{1/2}\mathbb{E}\hat \Z_{1}).
		\end{align*}
		And from the definition of $Q'_n$ and $Q''_n$, we observe
		\[\mathbb{E}\int f(x)dQ_n''(x-n^{1/2}\mathbb{E}\hat \Z_{1})=\mathbb{E}\int f(x)\, dQ_n'(x).\]
		Therefore, we obtain
		\[\mathbb{E}\int f\, dQ_n'=\mathbb{E}\int f_{a_n}\,dQ_n''.\]
		Then we complete the proof of Lemma \ref{L1.3}.
	\end{proof}

	We are now in a position to prove Theorem \ref{distance}. Assume that $V=I$, without loss of generality. Let $Q_n'$ be the conditional distribution of $n^{-1/2}(\hat \Z_{1}+\dots+\hat \Z_{n})$ given $C_n$, and $Q_n''$ be the conditional distribution of $n^{-1/2}(\tilde \Z_{1}+\dots+\tilde \Z_{n})$ given $C_n$. By Lemma \ref{L1.1}, for sufficiently large $n$, we have
	\begin{align*}
		\bigg\vert \mathbb{E}\int f \,d (Q^*_n-Q_n')\bigg\vert&\le M_{s'}(f)\mathbb{E}\int (1+\Vert x\Vert^{s'})\vert Q^*_n-Q_n'\vert\,(dx)\\
		&\le CM_{s'}(f)n^{-(s-2)/2}\bar\Delta_{n,s},
	\end{align*}
	where
	\[\bar\Delta_{n,s}=\int_{\{\Vert \Z_1\Vert>n^{1/2}\}}\Vert \Z_1\Vert^s=o(1) \qquad (n\to\infty).\]
	
	By writing $a_n=n^{1/2}\mathbb{E}\hat \Z_{1}$ and applying Lemma \ref{lemma: 14.1} along with the definition of $\bar \Delta_{n,s}$, we determine its bound:
	\[\Vert a_n\Vert= n^{1/2}\Vert \mathbb{E}\hat \Z_{1}\Vert\le k^{1/2}\bar\Delta_{n,s}n^{-(s-2)/2}=o(n^{-(s-2)/2}) \quad (n\to\infty).\]
	Furthermore, by using the definitions of $Q_n$ and $f_{a_n}$, we prove that: 
	\begin{equation}\label{truncation}
		\mathbb{E}\int f\, dQ_n'=\mathbb{E}\int f_{a_n}\,dQ_n''.
	\end{equation}
	Further details about  equation \eqref{truncation} are provided in Lemma \ref{L1.3}. According to the final inequality in Lemma \ref{lemma: 14.6}, we obtain that: 
	\begin{align*}
		&\quad\bigg| \int(f_{a_n}-f)\, d\Big(\sum_{r=0}^{s-2}n^{-r/2}P_r(-\Phi:\{\bmchi_\bfv\})\Big)\bigg|\\
		&=\bigg|\int f(x)\sum_{r=0}^{s-2}n^{-r/2}\Big(P_r(-\phi:\{\bmchi_\bfv\})(x-a_n)-P_r(-\phi:\{\bmchi_\bfv\})(x)\Big)\,dx\bigg|\\
		&\le CM_{s'}(f)n^{-(s-2)/2}\bar\Delta_{n,s},
	\end{align*}
	where $P_r(-\Phi_{o,V}:\{\bm\chi_v\})$ is the finite signed measure on $R^k$. Next, using the first inequality in Lemma \ref{lemma: 14.6}, we obtain:
	\begin{align*}
		&\quad\bigg|\int f_{a_n}\, d\Big(\sum_{r=0}^{s-2}n^{-r/2}P_r(-\Phi:\{\bmchi_\bfv\})-\sum_{r=0}^{s-2}n^{-r/2}P_r(-\Phi_{0, B_n}:\{\tilde\bmchi_{\bfv}\})\Big)\bigg|\\
		&\le CM_{s'}(f)n^{-(s-2)/2}\bar\Delta_{n,s},
	\end{align*}
	where $B_n=Cov(\tilde \Z_{1})$ and $\tilde\bmchi_{\bfv}$ denotes the $v$-th cumulant of $\tilde \Z_{1}$.
	
	Through the above analysis, calculations, and Lemma \ref{P_ra}, we found that  to establish the final conclusion, it is sufficient to derive an upper bound for the following formula:
	\[\mathbb{E} \int  f_{a_n}\,d\Big(Q_n''-\sum_{r=0}^{s-2}n^{-r/2}P_{ra}(-\Phi_{0, B_n}:\{\tilde\bmchi_{\bfv}\})\Big).\]
	
	$\mathbf{Step~1.}$~ According to the result of Lemma \ref{lemma:11.6}, we derive the following bound:
	\begin{align}\label{1.1.1}
		\mathbb{E}\int[1+(\|\x\|+\epsilon+\|a_n\|)^{s'}]|H_n*K_{\epsilon}|\,d\x\le C\max_{0\le|\beta|\le a+s+1}\mathbb{E}\int|D^\beta(\hat H_n\hat K_{\epsilon})(\bm{t})|d\bm{t},
	\end{align}
	where $\hat H_n$ is the Fourier-Stieltjes transform of $H_n$. Additionally, according to Leibniz's rule for differentiation, if $\alpha\in N^a$ and $\beta\in N^a$, we rewrite: 
	\[D^\beta(\hat H_n\hat K_{\epsilon})=\sum_{0\le\alpha\le\beta}C(D^{\beta-\alpha}\hat H_n)(D^\alpha\hat K_{\epsilon}).\]
	Then, by applying Lemma \ref{lemma: 9.10}, we obtain:
	\begin{align}\label{1.1.2}
		\mathbb{E}\int_{\{\|\bm{t}\|\le A_n\}}|D^{\beta-\alpha}\hat H_n(\bm{t}) D^\alpha\hat K_{\epsilon}(\bm{t})|dt&\le \mathbb{E}\int_{\{\|\bm{t}\|\le A_n\}}C*|D^{\beta-\alpha}\hat H_n(\bm{t)}|d\bm{t}\\\nonumber
		&\le \mathbb{E}\big( Cn^{-(s+a-1)/2}\eta_{s+a+1}\big),
	\end{align}
	where
	\[\eta_{s+k+1}=\int\|T_n \x\|^{s+a+1}Q'_n(d\x)=\mathbb{E}^*\|T_n\tilde \Z_{1}\|^{s+a+1},\]
	and
	\[A_n=\frac{Cn^{1/2}}{\big(\mathbb{E}^*\|T_n\tilde \Z_{1}\|^{s+a+1}\big)^{1/(s+a-1)}}.\]
	Here $T_n$ is the symmetric and positive-definite matrix satisfying $T_n^2=B_n^{-1}$ for all $n\ge n_0$. Recall that the matrix $B_n$ is defined as $B_n=n^{-1}\sum_{j=1}^nCov(\tilde \Z_{j})$.
	
	According to Corollary 14.2 of \cite{bhattacharya2010normal}, there exists an integer $n_0$ such that $(\|T_n\|: n\ge n_0)$ is bounded. Additionally, according to Lemma \ref{lemma: 14.1}, the term $\mathbb{E}\mathbb{E}^*\|\tilde \Z_{1}\|^{s+a+1}$ is bounded by:
	\begin{equation}\label{1.1.3}
		\mathbb{E}\mathbb{E}^*\|\tilde \Z_{1}\|^{s+a+1}\le \mathbb{E}(2^{s+a+1}\mathbb{E}^*\|\hat \Z_{1}\|^{s+a+1})=o(n^{(a+1)/2}),\quad (n\to\infty).
	\end{equation}
	Therefore, from the above estimate \eqref{1.1.3}, we find that the right side of equation \eqref{1.1.2} is bounded by:
	\[\mathbb{E}\big( Cn^{-(s+a-1)/2}\eta_{a+s+1}\big)\le Cn^{-(s+a-1)/2}\mathbb{E}\mathbb{E}^*\|\tilde \Z_{1}\|^{s+a+1}=o(n^{-s/2}).\]
	
	Therefore, we can obtain the bound of \eqref{1.1.1} on the set $\{\|\bm{t}\|\le A_n\}$. Specifically,
	\begin{equation}\label{bound10}
		\mathbb{E}\int_{\{\|\bm{t}\|\le A_n\}}|D^{\beta-\alpha}\hat H_n(\bm{t}) D^\alpha\hat K_{\epsilon}(\bm{t})|d\bm{t}=o(n^{-(s-2)/2}),\qquad (n\to\infty).\end{equation}
	
	Combining equations \eqref{1.1.12} and \eqref{bound10}, we derive the following:
	\begin{align}\label{A.11}
		\Big|\mathbb{E}\int f_{a_n}\,dH_n\Big|\le ~& \mathbb{E}~\bar\omega_{f_{a_n}}\bigg(2e^{-dn}:\bigg|\sum_{r=0}^{s+k-2}n^{-r/2}P_{ra}(-\Phi_{0,D_n}:\{\tilde\bmchi_{\bfv}\})\bigg|\bigg)\\\nonumber
		&\quad+M_{s'}(f)o(n^{-(s-2)/2}) \qquad (n\to\infty).
	\end{align}

	$\mathbf{Step~2.}$~~Now, we only need to calculate the more specific form of \[\bar\omega_{f_{a_n}}(2e^{-dn}:|\sum_{r=0}^{s+k-2}n^{-r/2}P_{ra}(-\Phi_{0,B_n}:\{\tilde\bmchi_{\bfv}\})|).\] The details are as follows:
	\begin{align*}
		\mathbb{E}~\bar\omega_{f_{a_n}}\bigg(2e^{-dn}:\Big|&\sum_{r=0}^{s+k-2}n^{-r/2}P_{ra}(-\Phi_{0,B_n}:\{\tilde\bmchi_{\bfv}\})\Big|\bigg)\\
		&\le \mathbb{E}\sum_{r=0}^{s+k-2}n^{-r/2}\bar\omega_{f_{a_n}}\bigg(2e^{-dn}:\Big|P_{ra}(-\Phi_{0,B_n}:\{\tilde\bmchi_{\bfv}\})\Big|\bigg).
	\end{align*}
	
	Next, we split it into two cases and calculate each separately. Note that: 
	\begin{align}\label{1.1.13}
		P_{ra}(-\phi:\{\bm\chi_\bfv\})=\sum_{m=1}^r\frac{1}{m!}\bigg[~~\sideset{}{^*}\sum_{j_1,\dots,j_m}\bigg(~~~\sideset{}{^{**}}\sum_{j_1,\dots,j_m}\frac{\chi^*_{v_1}\dots\chi^*_{v_m}}{v_1!\dots v_m!}(-1)^{r+2m}D^{v_1+\dots+v_m}\phi\bigg)\bigg],
	\end{align}
	where, $\Sigma^*$ denotes summation over all $m$-tuples of positive integers $(j_1,\dots, j_m)$ satisfying $j_1+\dots+j_m=r$, and $\Sigma^{**}$ denotes summation over all $m$-tuples of nonnegative integral  vectors $(v_1,\dots, v_m)$ satisfying $|v_i|=j_i+2$ for fixed $(j_1,\dots, j_m)$. Furthermore, $\chi^*_{v_1}\dots\chi^*_{v_m}$ denote the conditional cumulants. Additionally, defining $\rho^*_{j_1+2}\dots\rho^*_{j_m+2}$ as the conditional moments, from Lemma \ref{lemma: 7.2} and Lemma \ref{lemma: 6.3}, we obtain:
	\begin{align}\label{1.1.14}
		\mathbb{E}|\chi^*_{v_1}\dots\chi^*_{v_m}|&\le  C\mathbb{E}(\rho^*_{j_1+2}\dots\rho^*_{j_m+2})=C\bigg(\frac{\rho_{j_1+2}}{\rho_2^{(j_1+2)/2}}\bigg)\dots\bigg(\frac{\rho_{j_m+2}}{\rho_2^{(j_m+2)/2}}\bigg)\rho_2^{(r/2+m)}\\\nonumber
		&\le C\bigg(\frac{\rho_{r+2}}{\rho_2^{(r+2)/2}}\bigg)^{j_1/r}\dots\bigg(\frac{\rho_{r+2}}{\rho_2^{(r+2)/2}}\bigg)^{j_m/r}\rho_2^{(r/2+m)}\\\nonumber
		&=C\rho_2^{(r/2+m)}\bigg(\frac{\rho_{r+2}}{\rho_2^{(r+2)/2}}\bigg)=C\rho_2^{m-1}\rho_{r+2}.
	\end{align}
	Through some calculations, we obtain:
	\begin{align}\label{1.1.15}
		|D^{v_1+\dots+v_m}\phi|\le C(1+\|\bm{t}\|^{|v_1+\dots+v_m|})\phi,
	\end{align}
	where
	\[|v_1+\dots+v_m|\le(j_1+2)+\dots+(j_m+2)\le 3r.\]
	
	Therefore, for $0\le r \le s-2$, by combining equations \eqref{1.1.13}, \eqref{1.1.14} and \eqref{1.1.15}, we obtain: 
	\begin{align*}
		\mathbb{E}\bar\omega_{f_{a_n}}&\bigg(2e^{-dn}: n^{-r/2}\Big|P_{ra}(-\Phi_{0,B_n}:\{\tilde\bmchi_{\bfv}\})\Big|\bigg)\\
		&\le n^{-r/2}\int C\omega_f(\x:2e^{-dn})E\|\tilde \X_{1}\|^{r+2}(1+\|\x\|^{3r})\phi_{a_n,B_n}(\x)d\x\\
		&\le C\rho_s\bigg[\int_{\{\|\x\|\le n^{1/6}\}}\omega_f(\x:2e^{-dn})|\phi_{a_n,B_n}(\x)-\phi(\x)|d\x\\
		&\qquad\qquad+\int_{\{\|\x\|\le n^{1/6}\}}\omega_f(\x:2e^{-dn})\phi(\x)d\x\bigg]\\
		&\quad~+Cn^{-r/2}\rho_s\int_{\{\|\x\|>n^{1/6}\}}\omega_f(\x:2e^{-dn})(1+\|\x\|^{3r})\phi_{a_n,B_n}(\x)d\x\\
		&\le C\rho_s\bigg[ M_{s'}(f)\int_{\{\|\x\|\le n^{1/6}\}}(1+\|\x\|^{s'})|\phi_{a_n,B_n}(\x)-\phi(\x)|d\x+\bar\omega_f(2e^{-dn}:\Phi)\bigg]\\
		&\quad~ +Cn^{-r/2}\rho_s\int_{\{\|\x\|>n^{1/6}\}}(1+\|\x\|^{3r+s'})\phi_{a_n,B_n}(\x)d\x\\
		&\le M_{s'}(f)o(n^{-(s-2)/2})+C\rho_s\bar\omega_f(2e^{-dn}:\Phi).
	\end{align*}
	Additionally, these inequalities rely on the definition of $P_{ra}(-\Phi_{0,B_n}:\{\tilde\bmchi_{\bfv}\})$, Lemma \ref{lemma: 14.6}, Lemma \ref{lemma: 7.2} and:
	\[\omega_f(\x:\epsilon)\le2M_{s'}(f)(1+(\|\x\|+\epsilon)^{s}).\] 
	
	On the other hand, when $s-1\le r\le s+k-2$:
	\begin{align}\label{1.1.16}
		\mathbb{E}\bar\omega_{f_{a_n}}\bigg(2e&^{-dn}: n^{-r/2}\Big|P_{ra}(-\Phi_{0,B_n}:\{\tilde\bmchi_{\bfv}\})\Big|\bigg)\\\nonumber
		&\le C n^{-r/2} \mathbb{E}\|\tilde \Z_{1}\|^{r+2}M_{s'}(f)\int(1+\|\x\|^{3r+s'})\phi_{a_n,B_n}(\x)d\x\\\nonumber
		&=M_{s'}(f)o(n^{-(s-2)/2})\qquad (n\to\infty),
	\end{align}
	These inequalities are based on Lemma \ref{L1.2}. In other words, by using Lemma \ref{L1.2} we can obtain:
	\[n^{-r/2}\mathbb{E}\|\tilde \Z_{1}\|^{r+2}=o(n^{-(s-2)/2}).\]
	
	Therefore, it can be demonstrated that:
	\begin{equation}\label{1.1.17}
		\Big|\mathbb{E}\int_{\{\|\bm{t}\|>A_n\}} f_{a_n}\,dH_n\Big|\le o(n^{-(s-2)/2})M_{s'}(f)+C\bar\omega_f(2e^{-dn}:\Phi) \quad (n\to\infty).
	\end{equation}
	Noting that
	\begin{align*}
		\bigg|\int f_{a_n}d \bigg(\sum_{r=s-1}^{s+k-2}n^{-r/2}P_{ra}(-\Phi_{0,D_n}:\{\bm\chi_{v,n}\})\bigg)\bigg|=M_{s'}(f)o(n^{-(s-2)/2}).
	\end{align*}
	and combining equations \eqref{A.11}, \eqref{1.1.16} and \eqref{1.1.17}, we arrive at the conclusion that:
	\begin{align}\label{1.1.18}
		\bigg|\mathbb{E}\int_{\{\|\bm{t}\|>A_n\}} f_{a_n}\,d(Q''_n-\sum_{r=0}^{s-2}n^{-r/2}&P_{ra}(-\Phi_{0,B_n}))\bigg|\\\nonumber
		&\le M_{s'}(f)o(n^{-(s-2)/2})+C\bar\omega_f(2e^{-dn}:\Phi).
	\end{align}
	
	Therefore, we obtain the result.

	\subsection{Proof of Theorem \ref{main1}}\label{s2}
	We now give the proof of Theorem \ref{main1} stated in Section \ref{sec3}, which asserts that there is a valid Edgeworth expansion for the function of sample means of vector variables under the GPCC. When $a=1$, the GPCC degenerates into the partial Cram\'{e}r's condition. The proof of Theorem \ref{main1} in this case is obtained by \cite{bai1991edgeworth}. When $a=k$, the GPCC degenerates into the Cram\'{e}r's condition. The proof of Theorem \ref{main1} in this case is obtained by \cite{bhattacharya1978validity}. 
	
	Therefore, we only need to prove the case when $1<a<k$. Define the functions:
	\begin{align*}
		&h_n(\textbf{z})=n^{1/2}[H(\bm\mu+n^{-1/2}\textbf{z})-H(\bm\mu)],\qquad \textbf{z}=(z^{(1)},\dots,z^{(k)})\in R^k,\\
		&f_{s-1}(\textbf{z})=\sum l_i z^{(i)}+\frac {1}{2} n^{-1/2}\sum l_{i,j}z^{(i)}z^{(j)}+\frac {1}{3!} n^{-1}\sum l_{i_1,i_2,i_3}z^{(i_1)}z^{(i_2)}z^{(i_3)}+\dots\\
		&\qquad\qquad+\frac{1}{(s-1)!}n^{-(s-2)/2}\sum l_{i_1,\dots,i_{s-1}}z^{(i_1)}\cdots z^{(i_{s-1})} .
	\end{align*}
	Then we can rewrite $W_n$  and $W_n'$ as
	\[W_n=h_n(n^{1/2}(\bar \Z-\bm\mu)),\quad  W_n'=f_{s-1}(n^{ 1/2}(\bar \Z-\bm\mu)).\]
	
	Let $D_i$ denote differentiation with respect to the $i$th coordinate. Write $D=(D_1,\dots, D_k)$. Then $p_r(-D)$ is a differential operator. Write 
	\[\phi_V(\textbf{u})=(2\pi)^{-k/2}(det V)^{-1/2}\exp\Big(-\frac 12 \langle \textbf{u}, V^{-1}\textbf{u}\rangle\Big),\]
	\[\xi_{s,n}(\textbf{u})=\Big[1+\sum_{r=1}^{s-2}n^{-r/2}p_r(-D)\Big]\phi_V(\textbf{u}),\quad \textbf{u}\in R^k.\]
	
	Let $Q_n$ denote the distribution of $n^{1/2}(\bar \Z-\bm\mu)$ and $Q_n^*$ denote that given $E_n$. $\Phi_V$ is the $k$-variate normal distribution with mean zero and covariance matrix $V$.  Let a class  $ \mathscr{B}$ of  Borel sets satisfy
		\begin{equation}\label{special}
			\sup_{B\in\mathscr{B}}\int_{(\partial B)^\epsilon}\phi_{\sigma^2}(x)dx=O(\epsilon)\quad (\epsilon\to 0).
		\end{equation}
		For any $B\in\mathscr{B}$,
		define by
		\[A=\{\textbf{u}\in R^k: h_n(\textbf{u})\in B\}.\]
	For the continuity of $h_n(\textbf{u})$, we can obtain
	\[\partial A\subset\{\textbf{u}\in R^k: h_n(\textbf{u})\in \partial B\}.\] 
	Now, assume that $\textbf{u}\in (\partial A)^\epsilon$. Then, there exists a $\bf{u}'$ such that $h_n(\textbf{u}')\in\partial B $ and $|\textbf{u}'-\textbf{u}|<\epsilon$. Let $M_n=\{|\textbf{u}|<((s-1)\Lambda \log n)^{1/2}\}$, where $\Lambda$ is the largest eigenvalue of $V$. Given this, if $\textbf{u}\in M_n$, then $|h_n(\textbf{u}')-h_n(\textbf{u})|\le d'\epsilon$, where $d'$ is an upper bound of $|\rm{grad}$\,$h_n|$ on $M_n^\epsilon$. Here $\partial B$ is the boundary of $B$, and $ M_n^\epsilon$ is the set of all points within a distance $\epsilon$ from $ M_n$. Since the $\Phi_V$-probability of the complement of $M_n$ is $o(n^{-(s-2)/2})$, we derive
	\begin{equation}\label{2.1}
		\Phi_{V}((\partial A)^{\epsilon})\le\Phi_{V}(\{ h_n(\textbf{u})\in (\partial B)^{d'\epsilon}\})+o(n^{-(s-2)/2}).
	\end{equation}
	And according to Lemma \ref{lemma: 2.1}, we get
	\begin{align}\label{2.2}
		\Phi_{V}(\{ h_n(\textbf{u})\in (\partial B)^{d'\epsilon}\})&=\int_{\{h_n(\textbf{u})\in (\partial B)^{d'\epsilon}\}}\xi_{s,n}(\textbf{u})d\textbf{u}+o(n^{-(s-2)/2})\\\nonumber
		&=\int_{ (\partial B)^{d'\epsilon}}\phi_{\sigma^2}(v)dv+o(n^{-(s-2)/2})\\\nonumber
		&=O(\epsilon)+o(n^{-(s-2)/2}).
	\end{align}
	Therefore, combining the equation \eqref{2.1}, and \eqref{2.2} and from Corollary \ref{co1.1} on asymptotic expansion under GPCC, we obtain
	\begin{equation}\label{2.3}
		\sup_{A\in\mathscr{B}^k}\Big\vert \mathbb{E} Q^*_n(A)-\sum_{r=0}^{s-2}n^{-r/2}P_r(-\Phi_{V}:\{\chi_\bfv\})(A)\Big\vert=o(n^{-(s-2)/2}),
	\end{equation}
	where $A$ satisfies the boundary condition \eqref{boundary}. According to the relationship of signed measure $P_r$ and $p_r$, we get
	\begin{align*}
		\sum_{r=0}^{s-2}n^{-r/2}P_r(-\Phi_{V}:\{\bm\chi_\bfv\})(A)&=\sum_{r=0}^{s-2}n^{-r/2}p_r(-D:\{\bm\chi_\bfv\})\Phi_{V}(A)\\
		&=\int_{A}\Big(1+\sum_{r=1}^{s-2}n^{-r/2}p_r(-D)\Big )\phi_V(\textbf{u})\,d \textbf{u}\\
		&=\int_{A}\xi_{s,n}(\textbf{u})\,d \textbf{u}.
	\end{align*}
	Therefore, we can rewrite equation \eqref{2.3} as
	\begin{equation}\label{2.4}
		\sup_{A\in\mathscr{B}^k}\Big\vert \mathbb{E} Q^*_n(A)-\int_{A}\xi_{s,n}(\textbf{u})\,d \textbf{u}\Big\vert=o(n^{-(s-2)/2}),
	\end{equation}
	And then from the Lemma \ref{lemma: 2.1}, we can translate the integral of the multivariate Edgeworth expansion over the region to the integral of the univariate one. Namely, we calculate
	\begin{equation}\label{2.5}
		\int_{\{h_n(\textbf{u})\in B\}}\xi_{s,n}(\textbf{u}) \,d \textbf{u}=\int_B d F_n(u)+o(n^{-(s-2)/2}).
	\end{equation}
	where 
	\[F_n(u)=\int_{-\infty}^u\Big[1+\sum_{r=1}^{s-2}n^{-r/2}q_r(v)\Big]\phi_{\sigma^2}(v)\,dv.\]
	And $q_r$ is polynomials whose coefficients do not depend on $n$.
	
	Therefore, by combining equations \eqref{2.4} and \eqref{2.5}, we derive that for all Borel set $B$:
	\begin{equation}
		\sup_{B\in\mathscr{B}}\Big\vert \mathbb{E}Q^*_n(A)-\int_B \,d F_n(u)\Big\vert=o(n^{-(s-2)/2}).
	\end{equation}
	
	Next, by utilizing the definition of conditional expectation and our statistic $W_n$, we  obtain:
	\[\mathbb{E}Q^*_n(A)=\mathbb{EE}^*[I_{\{h_n(\textbf{u})\in B\}}]=P(W_n\in B).\]
	Therefore, by considering $B$ as the specific Borel set $(-\infty, x)$, which satisfies \eqref{special}, we obtain:
	\begin{equation}\label{2.6}
		\sup_{x}\Big\vert \mathcal{Q}_n(x)-\int_B \,d F_n(u)\Big\vert=o(n^{-(s-2)/2}),
	\end{equation}
	where
	\[F_n(u)=\int_{-\infty}^u\Big[1+\sum_{r=1}^{s-2}n^{-r/2}q_r(v)\Big]\phi_{\sigma^2}(v)\,dv.\]
	
	Next, we shall identify $F_n$ and $\Psi$. We will show this in two cases. In the first case, we assume that $\Z_1$ is bounded. Assume that the distribution function of $W_n^{\prime}$ is $P(x)$. On the one hand, note that $f_{s-1}$ is a Taylor expansion of $h_n$ and $W_n'$ is a polynomial in $n^{1/2}(\bar \Z-\bmmu)$. Therefore, the moment of $W_n'$ can be approximated by the moments of $W_n$. And according to the equation \eqref{2.6}, we write
	\[\mathbb{E}W_n^{\prime j}=\int_{R^k}f_{s-1}^j\xi_{s,n}(\z)\,d\z+o(n^{-(s-2)/2}).\]
	And then according to Lemma \ref{lemma: 2.1}, we get
	\begin{equation}\label{2.7}
		\mathbb{E}W_n^{\prime j}=\int_{-\infty}^{\infty}u^jdF_n(u)+o(n^{-(s-2)/2}).
	\end{equation}
	On the other hand, evidence from \cite{bhattacharya1978validity} suggests that
	\[\kappa_{j,n}=O(n^{-(j-2)/2}), ~~j\ge 3;\quad \tilde\kappa_{j,n}=\kappa_{j,n}+o(n^{-(s-2)/2}), ~~j\ge 1; \quad \tilde\kappa_{j,n}=0, ~~j>s.\]
	That is, the difference between $\tilde\kappa_{j,n}$ and $\kappa_{j,n}$ is $o(n^{-(s-2)/2})$. Therefore, based on the approximation of the characteristic function of $W_n^{\prime}$ i.e. the equation \eqref{expansion}, it can be deduced that 
	\begin{equation}\label{2.8}
		\sup_{|t|\le 1}\Big|\hat\psi_{s,n}(t)-\mathbb{E}(\exp(itW_n^{\prime}))\Big|=o(n^{-(s-2)/2}).
	\end{equation}
	Next, from the equation \eqref{2.8} and derivatives of $\hat\psi_{s,n}$ at zero differ from those of $\mathbb{E}(\exp(itW_n^{\prime}))$ by $o(n^{-(s-2)/2})$, we find 
	\begin{equation}\label{2.9}
		\mathbb{E}W_n^{\prime j}=\int_{-\infty}^{\infty}u^jd\Psi_{s,n}(u)+o(n^{-(s-2)/2}).
	\end{equation}
	
	Hence, by applying equations \eqref{2.7} and \eqref{2.9}, and noting that neither $F_n$ nor $\Psi_{s,n}$ include terms of $o(n^{-(s-2)/2})$, we can conclude that:
	\begin{equation}\label{2.10}
		\int_{-\infty}^{\infty}u^j\,dF_n(u)=\int_{-\infty}^{\infty}u^j\,d\Psi_{s,n}(u).
	\end{equation}
	From equation \eqref{2.10}, we observe that the values and derivatives of all orders of the Fourier-Stieltjes transforms of $F_n$ and $\Psi_{s,n}$ coincide at the origin. Hence, $F_n$ and $\Psi_{s,n}$ have the same distribution. In other words, $F_n=\Psi_{s,n}$.
	
	In the other situation, when $\Z_1$ is in the general case. we define a new random vector $\Z_{1,c}$ as follows:
	\begin{equation*}
		\Z_{1,c}=\left\{
		\begin{aligned}
			\Z_1 &, &|\Z_1|\le c,\\
			0 &, &|\Z_1|>c.
		\end{aligned}
		\right.
	\end{equation*}
	Additionally,  we can choose $c$ to be sufficiently large such that the characteristic function of $\Z_{1c}$ satisfies GPCC. Specifically, the expectation of the bound of the conditional characteristic function, given $E_n$, is bounded away from one at infinity. Furthermore, we define the coefficient polynomials of $n^{-r/2}$ in $\psi_{s,n}$ as $\bar q_r$, 
	\[\bar q_r(v)=\Big[\pi_r\Big(-\frac{d}{dv}\Big)\phi_{\sigma^2}(v)\Big]/\phi_{\sigma^2}(v).\]
	Let $\bm\gamma_{s}$ be the vector of all cumulants of $\Z_{1}$ of order $s$ and less, and let $\bm\gamma_{s,c}$ be the vector of all cumulants of $\Z_{1,c}$ of order $s$ and less. Since $\Z_{1,c}$ is a bounded random vector, from our previous results, we obtain $q_r(\bm\gamma_{s,c})=\bar q_r(\bm\gamma_{s,c})$. Because of 
	\[\bm\gamma_{s,c}\to\bm\gamma_s,\quad (c\to\infty)\]
	and the continuity of $\bar q_r$ and $q_r$, we can conclude that
	\[q_r(\bm\gamma_{s})=\bar q_r(\bm\gamma_{s}).\] 
	Thus, the proof of Theorem \ref{main1} is complete.

	\subsection{Proof of Theorem \ref{main3}}\label{s4}

	We now provide the proof of Theorem \ref{main3} as stated in Section 4 which asserts that there is a valid Edgeworth expansion for the function of sample means of special vector variables under GPCC \eqref{Order 1}. 
	Let
	\begin{equation*}
		\left\{
		\begin{aligned}
			V_1&=w_1+\dots+w_k,\\
			V_2&=K_1(w_1)+\dots+K_1(w_k),\\
			&~~\vdots\\
			V_k&=K_{k-1}(w_1)+\dots+K_{k-1}(w_k),
		\end{aligned}
		\right. \quad J=\left | \begin{matrix}
			\frac{\partial V_1}{\partial w_1}  \quad&\frac{\partial V_1}{\partial w_2} \quad  & \dots & \quad\frac{\partial V_1}{\partial w_k}\\[3mm]
			\frac{\partial V_2}{\partial w_1} \quad&\frac{\partial V_2}{\partial w_2}\quad & \dots &\quad\frac{\partial V_2}{\partial w_k} \\[3mm]
			\vdots \quad& \vdots \quad& \cdots & \quad\vdots \\[3mm]
			\frac{\partial V_k}{\partial w_1}\quad &\frac{\partial V_k}{\partial w_2}\quad & \dots &\quad\frac{\partial V_k}{\partial w_k} \\
		\end{matrix} \right |.
	\end{equation*}
	
	Let $\mathcal{P}$ be the distribution function. According to the Lebesgue decomposition theorem, the distribution function of $Z_{j1}$ can be uniquely decomposed into three components:
	\[P(Z_{j1}\le x)=c_1 F_{j1}(x)+c_2 F_{j2}(x)+c_3 F_{j3}(x),\]
	where $c_1>0$, $c_k\ge 0$ for $k=2,3$, and $c_1+c_2+c_3=1$. Here, $F_{j1}(x), F_{j2}(x)$, and $F_{j3}(x)$ are the absolutely continuous, discrete, and singular distribution functions, respectively.
	
	Hence, the distribution of $Z_{j1}+Z_{j2}+\dots +Z_{jn}$ has an absolutely continuous component:
	\[c_1^k F_{11}*F_{21}*\dots*F_{n1}.\]
	
	Next, we shall establish the existence of the density function of $(V_1, V_2, \dots, V_k)$ in the absolutely continuous component using the variable transformation method. Since the Jacobi determinant $J$ is not equal to 0, we can obtain:
	\[\mathcal{P}(V_1, V_2, \dots, V_k)=\prod_{i=1}^k\mathcal{P}_{w_i} |J|.\]
	Thus, we establish that $(V_1, V_2, \dots, V_k)$ has an absolutely continuous component. Therefore, the conditional distribution of $V_1$ has an absolutely continuous component given $V_2, \dots, V_k$. 
	Specificallly, the conditional distribution function of $V_1$ given $V_2, \dots, V_k$ can be written as: 
	\[F=\delta G+(1-\delta)H,\]
	where $\delta>0$ and $G$ is absolutely continuous with density $g$. Then 
	\[v_1(t)\le \delta\bigg|\int_{-\infty}^{\infty}\exp\Big(i\sum_{j=1}^kt_jx_j\Big)g(x)dx\bigg|+1-\delta,\]
	and so it is suffices to prove that
	\begin{equation}\label{R-L}
		\lim_{\|t\|\to\infty}\bigg|\int_{-\infty}^{\infty}\exp\Big(i\sum_{j=1}^kt_jx_j\Big)g(x)dx\bigg|=0.\end{equation}
	
	According to the Riemann-Lebesgue lemma, \eqref{R-L} is evident. Using the notations from Remark \ref{CPC}, and letting $\T_1=\sum_{j=1}^k \Z_j$, we next provide the proof which asserts that the validity of the Edgeworth expansion for the function of sample means of vector variables when $\T_1$ satisfies GPCC \eqref{Order 1}. 
	
	Based on the definition of $W_n$ and $\mathcal{Q}_n$, we can obtain the following relationship:
	\begin{equation}
		\mathcal{Q}_n(x)=P(W_n\le x)=P(\sqrt n(H(\bar \Z)-H(\bm\mu))\le x).
	\end{equation}
	Note that
	\[W_n=\sqrt n(H(\bar \Z)-H(\bm\mu))=\sqrt n(H(\bar \T)-H(\bm\mu)),\]
	where
	\[\bar \Z=\frac 1n \sum_{i=1}^n \Z_i=\frac 1d\sum_{i=1}^d \T_i=\bar \T,\qquad d=n/k, \qquad \bm\mu=E\Z_n.\]
	Therefore, we obtain
	\begin{equation}
		\mathcal{Q}_n(x)=P(W_n\le x)=P(\sqrt n(H(\bar \T)-H(\bm\mu))\le x).
	\end{equation}
	Therefore, it suffices to prove that $\mathbb{E}|\T_1|^s<\infty$. We will demonstrate this in the following lemma.
	
	\begin{lemma}\label{moment}
		Assume that $\Z_{1}$ has finite $s$-th absolute moment for $j=1, 2,\dots, k$, where $m\ge 3$ is a known integer, then $\T_{1}$ has finite $s$-th absolute moment for $j=1, 2,\dots, k$.
	\end{lemma}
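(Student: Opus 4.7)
The plan is to apply the triangle inequality componentwise together with a convexity bound, reducing the moment of $\T_1$ to a finite multiple of the moment of a single $\Z_j$. Specifically, since $\T_1=\sum_{j=1}^k \Z_j$, the triangle inequality in $R^k$ gives $\|\T_1\|\le \sum_{j=1}^k \|\Z_j\|$. Raising both sides to the $s$-th power and invoking the elementary inequality $(a_1+\cdots+a_k)^s \le k^{s-1}(a_1^s+\cdots+a_k^s)$ for nonnegative reals and $s\ge 1$ (which is a standard consequence of Jensen's inequality applied to the convex function $x\mapsto x^s$) yields
\[
\|\T_1\|^s \;\le\; k^{s-1}\sum_{j=1}^k \|\Z_j\|^s.
\]

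Taking expectations on both sides and using that the $\Z_j$ are identically distributed (so each has the same finite $s$-th absolute moment as $\Z_1$), I obtain
\[
\mathbb{E}\|\T_1\|^s \;\le\; k^{s-1}\sum_{j=1}^k \mathbb{E}\|\Z_j\|^s \;=\; k^s\,\mathbb{E}\|\Z_1\|^s \;<\;\infty,
\]
which is the desired conclusion.

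There is no real obstacle here; the lemma is a routine moment estimate, and the only thing to verify carefully is the exponent bookkeeping and that $s\ge 3\ge 1$ so that the convexity inequality applies. I would state the two displays above, cite Jensen's inequality for the power-mean step, and finish by observing that the finiteness of $\mathbb{E}\|\Z_1\|^s$ is exactly the hypothesis. This then feeds into the preceding argument, which only needed $\mathbb{E}|\T_1|^s<\infty$ to apply Theorem~\ref{main1} to the aggregated i.i.d.\ sequence $\T_1,\T_2,\dots$ as flagged in Remark~\ref{CPC}.
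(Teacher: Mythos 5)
Your proof is correct and follows essentially the same route as the paper's: both reduce $\mathbb{E}\|\T_1\|^s$ to $\sum_j\mathbb{E}\|\Z_j\|^s$ via the triangle inequality and the convexity bound $(a_1+\cdots+a_k)^s\le k^{s-1}(a_1^s+\cdots+a_k^s)$. The only cosmetic difference is that the paper's proof carries an extra $1/k$ normalization in its definition of $\T_1$, which changes the constant but not the conclusion.
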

	\begin{proof}
		Assume $\rho_s=\mathbb{E}|\Z_1|^s<\infty$, then by the relationship of $\T_1$ and $\Z_1$, we can derive
		\begin{align*}
			\mathbb{E}|\T_1|^s&=\mathbb{E}\Big|\frac 1k \big(\Z_{1}+\dots+\Z_{k}\big)\Big|^s=\frac{1}{k^s}\mathbb{E}|\Z_{1}+\dots+\Z_{k}|^s\\
			&\le\frac {1}{k^s}* k^{s-1} (\mathbb{E}|\Z_1|^s+\dots+\mathbb{E}|\Z_k|^s)\\
			&=\frac 1k(\mathbb{E}|\Z_1|^s+\dots+\mathbb{E}|\Z_k|^s)\\
			&<\infty.
		\end{align*}
		Therefore, the lemma is completed.
	\end{proof}
	Using Lemma \ref{moment}, we can verify all the conditions necessary for Theorem \ref{main1}. Therefore, we can obtain the conclusion of Theorem \ref{main3}.

	\subsection{Proofs of examples}
	In this subsection,we will demonstrate the validity of the examples mentioned earlier in the article.
	
	\begin{proof}[Proof of Example \ref{3-dimension}] Observing that $X$ and $Y$ are independent, with $X$ following a continuous distribution and $Y$ following a discrete distribution, we can derive the following result:
		\begin{align*}
			\limsup _{\|\mathbf{ t}_2\|\to\infty}\mathbb{E}\Big|\mathbb{E}\Big(\exp[i (t_{1}X+t_{2}X^2)]\Big| Y\Big)\Big|=\limsup _{\|\mathbf{ t}_2\|\to\infty}\Big|\mathbb{E}\Big(\exp[i (t_{1}X+t_{2}X^2)]\Big)\Big|<1.
		\end{align*}
		The last inequality holds due to Theorem \ref{main3} mentioned above. 
	\end{proof}
	
	\begin{proof}[Proof of Example \ref{5-dimension}] Observing that $X$ and $Y$ are independent, with $X$ following a continuous distribution and $Y$ following a discrete distribution, we can derive the following result:
		\begin{align*}
			&\quad\limsup _{\|\mathbf{ t}_3\|\to\infty}\mathbb{E}\Big|\mathbb{E}\Big(\exp[i (t_{1}X+t_{2}X^2+t_{3}XY)]\Big| Y,Y^2\Big)\Big|\\
			&=\limsup _{\|\mathbf{ t}_3\|\to\infty}\Big|\mathbb{E}\Big(\exp[i (t_{1}X+t_{2}X^2+t_{3}y_0X)]\Big)\Big|<1.
		\end{align*}
		The last inequality holds due to Theorem \ref{main3} mentioned above. 
	\end{proof}
	
	\begin{proof}[Proof of Examples \ref{square}, \ref{Ex1} and \ref{logdata}]
		Since $x^2$ and $\log x$ are the first-order differentiable functions, the condition in Theorem \ref{main3} holds. Therefore, we can infer that $(w,w^2)$ and $(w,\log w)$ satisfy the GPCC.
	\end{proof}

	\section{Calculations}\label{appdx2}

	\subsection{Second order Edgeworth expansion for $W_n$}
	In this section, we outline a comprehensive calculation for the second-order correction term. Additionally, we delve into detailed computations for each coefficient referenced in Remark \ref{second-term}. First, we begin by calculating several important moments:
	\begin{align*}
		&\mathbb{E}(Z_{i_1}Z_{i_2})=\mu_{i_1i_2},\quad \mathbb{E}(Z_{i_1}Z_{i_2}Z_{i_3})=n^{-1/2}\mu_{i_1i_2i_3}, \\
		&\mathbb{E}(Z_{i_1}Z_{i_2}Z_{i_3}Z_{i_4})=n^{-1}\mu_{i_1i_2i_3i_4}+U_1,\\
		&\mathbb{E}(Z_{i_1}Z_{i_2}Z_{i_3}Z_{i_4}Z_{i_5})=n^{-1/2}U_2+O(n^{-3/2}),\\
		&\mathbb{E}(Z_{i_1}Z_{i_2}Z_{i_3}Z_{i_4}Z_{i_5}Z_{i_6})=n^{-1}U_4+n^{-1}U_5+U_3+O(n^{-3/2}),
	\end{align*}
	where $U_1, U_2, U_3$ retain the same meanings as mentioned above. Additionally, $U_i( i=4,5)$ in the above expressions refer to specific operations on population moments, which are defined as follows.
	\begin{align*}
		U_4&=\mu_{i_1i_2}\mu_{i_3i_4i_5i_6}+\mu_{i_1i_3}\mu_{i_2i_4i_5i_6}+\mu_{i_1i_4}\mu_{i_2i_3i_5i_6}+\mu_{i_1i_5}\mu_{i_2i_3i_4i_6}+\mu_{i_1i_6}\mu_{i_2i_3i_4i_5}\\
		&\quad+\mu_{i_2i_3}\mu_{i_1i_4i_5i_6}+\mu_{i_2i_4}\mu_{i_1i_3i_5i_6}+\mu_{i_2i_5}\mu_{i_1i_3i_4i_6}+\mu_{i_2i_6}\mu_{i_1i_3i_4i_5}+\mu_{i_3i_4}\mu_{i_1i_2i_5i_6}\\
		&\quad+\mu_{i_3i_5}\mu_{i_1i_2i_4i_6}+\mu_{i_3i_6}\mu_{i_1i_2i_4i_5}+\mu_{i_4i_5}\mu_{i_1i_2i_3i_6}+\mu_{i_4i_6}\mu_{i_1i_2i_3i_5}+\mu_{i_5i_6}\mu_{i_1i_2i_3i_4}\\
		U_5&=\mu_{i_1i_2i_3}\mu_{i_4i_5i_6}+\mu_{i_1i_2i_4}\mu_{i_3i_5i_6}+\mu_{i_1i_2i_5}\mu_{i_3i_4i_6}+\mu_{i_1i_2i_6}\mu_{i_3i_4i_5}+\mu_{i_1i_3i_4}\mu_{i_2i_5i_6}\\
		&\quad+\mu_{i_1i_3i_5}\mu_{i_2i_4i_6}+\mu_{i_1i_3i_6}\mu_{i_2i_4i_5}+\mu_{i_1i_4i_5}\mu_{i_2i_3i_6}+\mu_{i_1i_4i_6}\mu_{i_2i_3i_5}+\mu_{i_1i_5i_6}\mu_{i_2i_3i_4}
	\end{align*}

	Recall the Taylor expansion of $W_n$ and obtain its form when $s=4$. Specifically, $W_n$ can be expressed as follows:
	\[W_n=\sum_{1}l_iZ_i+n^{-1/2}\frac 12\sum_{2}l _{ij}Z_iZ_j+n^{-1}\frac{1}{3!}\sum_{3}l_{i_1i_2i_3}Z_{i_1}Z_{i_2}Z_{i_3}+O_p(n^{-3/2}).\]
	
	Next, we can obtain the first fouth-order moments of $W_n$ through a series of calculations. The specific form is shown below.
	\begin{align*}
		\mathbb{E}(W_n)&=n^{-1/2}\frac 12\sum_{2}l_{i_1i_2}\mu_{i_1i_2}+O(n^{-3/2}),\\
		\mathbb{E}(W_n^2)&=\sum_{2} l_{i_1}l_{i_2}\mu_{i_1i_2}+n^{-1}\sum_{3}l_{i_1}l_{i_2i_3}\mu_{i_1i_2i_3}+\frac 14n^{-1}\sum_{4} l_{i_1i_2}l_{i_3i_4}U_1\\
		&\quad+\frac 13n^{-1}\sum_{4}l_{i_1}l_{i_2i_3i_4} U_1+O(n^{-3/2}),\\
		\mathbb{E}(W_n^3)&=n^{-1/2}\sum_{3}l_{i_1}l_{i_2}l_{i_3}\mu_{i_1i_2i_3}+\frac 32n^{-1/2}\sum_{4}l_{i_1}l_{i_2}l_{i_3i_4}U_1+O(n^{-3/2}),\\
		\mathbb{E}(W_n^4)&=n^{-1}\sum_{4}l_{i_1}l_{i_2}l_{i_3}l_{i_4}\mu_{i_1i_2i_3i_4}+\sum_{4}l_{i_1}l_{i_2}l_{i_3}l_{i_4}U_1+2n^{-1}\sum_{5}l_{i_1}l_{i_2}l_{i_3}l_{i_4i_5}U_2\\
		&\quad+\frac 23 n^{-1}\sum_{6}l_{i_1}l_{i_2}l_{i_3}l_{i_4i_5i_6}U_3+\frac 32 n^{-1}\sum_{6}l_{i_1}l_{i_2}l_{i_3i_4}l_{i_5i_6}U_3+O(n^{-3/2}).
	\end{align*}
	
	Building on the relationship between moments and cumulants, we deduce the first four cumulants of $W_n$. Specifically, $B_1$ is the coefficient of the first-order cumulant with respect to $n^{-1/2}$, $B_2$ is the coefficient of the second-order cumulant with respect to $n^{-1}$, $B_3$ is the coefficient of the third-order cumulant with respect to $n^{-1/2}$, and $B_4$ is the coefficient of the fourth-order cumulant with respect to $n^{-1}$. Subsequently, we derive detailed expressions for each coefficient.
	
	\subsection{Some useful expressions}\label{B.2} In this section, we provide specific expressions for the symbols that have been utilized in Remark \ref{second-term} and Corollary \ref{correlation}.
	\begin{align*}
		A_1&=\frac 12 \sum_{i,j=1}^k l_{ij}\mu_{ij},~ B_1=\frac 12\sum_{2}l_{ij}\mu_{ij}, ~A_2=\sum_{i,j,k=1}^k l_i l_j l_m \mu_{ijm}+3\sum_{i,j,k,l=1}^k l_i l_j l_{ml} \mu_{im} \mu_{jl},\\
		B_2&=\sum_{3}l_{i_1}l_{i_2i_3}\mu_{i_1i_2i_3}+\sum_{4}\Big(\frac 14l_{i_1i_2}l_{i_3i_4}U_1+\frac 13 l_{i_1}l_{i_2i_3i_4}U_1-\frac 14 l_{i_1i_2}l_{i_3i_4}\mu_{i_1i_2}\mu_{i_3i_4}\Big),\\
		B_3&=\sum_{3}l_{i_1}l_{i_2}l_{i_3}\mu_{i_1i_2i_3}+\sum_{4}\Big(\frac 32l_{i_1}l_{i_2}l_{i_3i_4}U_1-\frac 32 l_{i_1}l_{i_2}l_{i_3i_4}\mu_{i_1i_2}\mu_{i_3i_4}\Big),\\
		B_4&=\sum_{4}l_{i_1}l_{i_2}l_{i_3}l_{i_4}\mu_{i_1i_2i_3i_4}+2\sum_{5}\Big(l_{i_1}l_{i_2}l_{i_3}l_{i_4i_5}U_2-l_{i_1}l_{i_2}l_{i_3}l_{i_4i_5} \mu_{i_1i_2i_3}\mu_{i_4i_5}\Big)+\sum_{6}\Big(\frac23 l_{i_1}l_{i_2}\\
		&\quad l_{i_3}l_{i_4i_5i_6}U_3+\frac32 l_{i_1}l_{i_2}l_{i_3i_4}l_{i_5i_6}U_3-3l_{i_1}l_{i_2}l_{i_3i_4}l_{i_5i_6}\mu_{i_5i_6}U_1+3 l_{i_1i_2}l_{i_3i_4}l_{i_5}l_{i_6}\mu_{i_1i_2}\mu_{i_3i_4}\mu_{i_5i_6}\Big).
	\end{align*}
	The summation symbols $\sum_i(i=1,\dots,6)$ in the above expressions represent the summation over $i$ subscripts, with each subscript ranging from $1$ to $k$. Additionally, $U_i( i=1,2,3)$ in the above expressions refer to specific operations on population moments, which are defined as follows.
	\begin{align*}
		U_1&=\mu_{i_1i_2}\mu_{i_3i_4}+\mu_{i_1i_3}\mu_{i_2i_4}+\mu_{i_2i_3}
		\mu_{i_1i_4},\\
		U_2&=\mu_{i_1i_2}\mu_{i_3i_4i_5}+\mu_{i_1i_3}\mu_{i_2i_4i_5}+\mu_{i_1i_4}\mu_{i_2i_3i_5}+\mu_{i_1i_5}\mu_{i_2i_3i_4}+\mu_{i_2i_3}\mu_{i_1i_4i_5}+\mu_{i_2i_4}\mu_{i_1i_3i_5}\\
		&\quad+\mu_{i_2i_5}\mu_{i_1i_3i_4}+\mu_{i_3i_4}\mu_{i_1i_2i_5}+\mu_{i_3i_5}\mu_{i_1i_2i_4}+\mu_{i_4i_5}\mu_{i_1i_2i_3},\\
		U_3&=\mu_{i_1i_2}(\mu_{i_3i_4}\mu_{i_5i_6}+\mu_{i_3i_5}\mu_{i_4i_6}+\mu_{i_3i_6}\mu_{i_4i_5})+\mu_{i_1i_3}(\mu_{i_2i_4}\mu_{i_5i_6}+\mu_{i_2i_5}\mu_{i_4i_6}+\mu_{i_2i_6}\mu_{i_4i_5})\\
		&\quad+\mu_{i_1i_4}(\mu_{i_2i_3}\mu_{i_5i_6}+\mu_{i_2i_5}\mu_{i_3i_6}+\mu_{i_2i_6}\mu_{i_3i_5})+\mu_{i_1i_5}(\mu_{i_2i_3}\mu_{i_4i_6}+\mu_{i_2i_4}\mu_{i_3i_6}\\
		&\quad+\mu_{i_2i_6}\mu_{i_3i_4})+\mu_{i_1i_6}(\mu_{i_2i_3}\mu_{i_4i_5}+\mu_{i_2i_4}\mu_{i_3i_5}+\mu_{i_2i_5}\mu_{i_3i_4}).
	\end{align*}
	
	The above definitions are utilized in Remark \ref{second-term}. Additionally, the following definitions are used in  Corollary \ref{correlation}.
	\begin{align*}
		&A_3=\frac 12 \sum_{i,j=1}^k l_{ij}\tilde\mu_{ij},\quad A_4=\sum_{i,j,k=1}^k l_i l_j l_m \tilde\mu_{ijm}+3\sum_{i,j,k,l=1}^k l_i l_j l_{ml} \tilde\mu_{im} \tilde\mu_{jl}\\
		&\tilde\mu_{11}=\mathbb{E}(Y_{11}-\mathbb{E}Y_{11})(Y_{11}-\mathbb{E}Y_{11}),\quad \tilde\mu_{12}=\mathbb{E}(Y_{12}-\mathbb{E}Y_{12})(Y_{12}-\mathbb{E}Y_{12}),\\
		&\tilde\mu_{13}=\mathbb{E}(Y^2_{11}-\mathbb{E}Y^2_{11})(Y^2_{11}-\mathbb{E}Y^2_{11}),\quad\tilde\mu_{14}=\mathbb{E}(Y^2_{12}-\mathbb{E}Y^2_{12})(Y^2_{12}-\mathbb{E}Y^2_{12}),\\
		&\tilde\mu_{15}=\mathbb{E}(Y_{11}Y_{12}-\mathbb{E}Y_{11}Y_{12})(Y_{11}Y_{12}-\mathbb{E}Y_{11}Y_{12})\dots
	\end{align*}

		\section{Auxiliary lemmas}\label{appdx3}
	This appendix collects several auxiliary results that were used in the preceding arguments.

	\begin{lemma}[Lemma 14.1 of Bhattacharya and Rao (2010)]\label{lemma: 14.1}
		Assume that $\rho_s<\infty$ for some $s\ge 2$. Define truncated random vectors
		\begin{equation*}
			Y_{j}=\left\{
			\begin{aligned}
				X_j &, &\|X_j\|\le n^{\frac 12},\\
				0 &, &\|X_j\|>n^{\frac12},
			\end{aligned}
			\right.
			\quad  Z_{j}=Y_{j}-\mathbb{E}Z_{j}\quad (1\le j\le n).
		\end{equation*}
		\begin{itemize}
			\item[(i)] One has \[\rho_{sj}=\mathbb{E}\Vert Y_j\Vert^s+\Delta_{n,j,s},\quad \bar\Delta_{n,s}\le\rho_s.\]
			\item[(ii)] If $\alpha$ is a nonnegative integral vector satisfying $1\le \vert\alpha\vert\le s$, then 
			\[\vert \mathbb{E}X_j^{\alpha}-\mathbb{E}Y_j^{\alpha}\vert\le n^{-(s-\vert\alpha\vert)/2}\Delta_{n,j,s},\]
			\[\vert \mathbb{E}Y_j^{\alpha}-\mathbb{E}Z_j^{\alpha}\vert \le \vert\alpha\vert(2^{\vert\alpha\vert}+1)n^{-(s-\vert\alpha\vert)/2}\Delta_{n,j,s}.\]
			\item[(iii)] One has
			\[\vert d_{il}-v_{il}\vert\le 2n^{-(s-2)/2}\bar\Delta_{n,s}\quad (1\le i, ~ l\le k).\]
			\item[(iv)] If $2\le s'\le s$, then
			\[\mathbb{E}\Vert Y_j\Vert^{s'}\le \rho_{s',j},\quad \rho'_{s',j}=\mathbb{E}\Vert Z_{j}\Vert^{s'}\le 2^{s'}\rho_{s',j},\quad \rho'_{s'}\le 2^{s'}\rho_{s'}.\]
			\item[(v)] If $s'>s$, then
			\begin{align*}
				\mathbb{E}\Vert Y_j\Vert^{s'}\le& (\epsilon n^{1/2})^{s'-s}\int_{\{\Vert X_j\Vert\le \epsilon^{1/2}\}}\Vert X_j\Vert^s\\
				&+n^{(s'-s)/2}\int_{\{\epsilon n^{1/2}<\Vert X_j\Vert\le n^{1/2}\}}\Vert X_j\Vert^s\le n^{(s'-s)/2}\rho_{s,j}\qquad (0\le\epsilon\le 1),
			\end{align*}
			\[\rho'_{s',j}=\mathbb{E}\Vert Z_j\Vert^{s'}\le 2^{s'}\mathbb{E}\Vert Y_j\Vert^{s'}.\]
		\end{itemize} 
	\end{lemma}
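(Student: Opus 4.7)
The plan is to derive all five parts from the single truncation identity $X_j = Y_j + X_j\mathbf{1}_{\{\|X_j\|>n^{1/2}\}}$ together with the elementary tail inequality $\|X_j\|^r \le n^{(r-s)/2}\|X_j\|^s$ valid on $\{\|X_j\|>n^{1/2}\}$ whenever $r\le s$. No probabilistic tools beyond Jensen's inequality, the triangle inequality, and the multinomial expansion are required; the whole lemma is careful bookkeeping of truncation and centering errors. Part (i) would come first, since its displayed identity $\rho_{s,j} = \mathbb{E}\|Y_j\|^s + \Delta_{n,j,s}$ is obtained by splitting $\mathbb{E}\|X_j\|^s$ over $\{\|X_j\|\le n^{1/2}\}$ and its complement; averaging and using $\Delta_{n,j,s}\le \rho_{s,j}\le \rho_s$ yields $\bar\Delta_{n,s}\le \rho_s$.

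For part (ii), the first inequality is immediate from $X_j^\alpha - Y_j^\alpha = X_j^\alpha\mathbf{1}_{\{\|X_j\|>n^{1/2}\}}$ and the tail inequality applied with $r=|\alpha|$. For the second, I would expand $Z_j^\alpha = (Y_j-\mathbb{E}Y_j)^\alpha$ by the multinomial theorem: the pure $Y_j^\alpha$ term cancels with $\mathbb{E}Y_j^\alpha$, and each of the remaining $|\alpha|$ cross-terms contains at least one factor of $\mathbb{E}Y_j$. Bounding $\|\mathbb{E}Y_j\|\le n^{-(s-1)/2}\Delta_{n,j,s}$ (the $|\alpha|=1$ case just proved) and controlling the other factors by $\mathbb{E}\|Y_j\|^{|\alpha|-j}\le n^{(|\alpha|-j)/2}$ produces the stated constant $|\alpha|(2^{|\alpha|}+1)$. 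Part (iii) then follows by specializing (ii) to $|\alpha|=2$ and averaging over $j$, the factor $2$ absorbing the truncation and centering errors. Parts (iv) and (v) are structurally similar: (iv) uses monotonicity $\|Y_j\|\le \|X_j\|$ together with $\|Z_j\|\le \|Y_j\|+\mathbb{E}\|Y_j\|$ and the $c_{s'}$-inequality plus Jensen's inequality, while (v) partitions the expectation $\mathbb{E}\|Y_j\|^{s'}$ across $\{\|X_j\|\le \epsilon n^{1/2}\}$ and $\{\epsilon n^{1/2}<\|X_j\|\le n^{1/2}\}$, on which $\|Y_j\|^{s'-s}$ is controlled by $(\epsilon n^{1/2})^{s'-s}$ and $n^{(s'-s)/2}$ respectively.

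The only genuinely delicate step is tracking the constant in (ii): one must verify that the pure $Y_j^\alpha$ term cancels exactly and that each of the remaining multinomial pieces, after substituting $\|\mathbb{E}Y_j\|\le n^{-(s-1)/2}\Delta_{n,j,s}$ and an upper bound on $\mathbb{E}\|Y_j\|^{|\alpha|-j}$, contributes at most $2^{|\alpha|}n^{-(s-|\alpha|)/2}\Delta_{n,j,s}$. This is mildly self-referential, but the recursion terminates at the $|\alpha|=1$ case which is just the first inequality of (ii). Beyond that, every remaining step is direct monotonicity or a $c_r$-inequality, so the proof is a clean inventory of tail and centering errors rather than a single technical obstacle.
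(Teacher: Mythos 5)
Your plan is correct, and it is essentially the standard argument for this result; note, however, that the paper itself gives no proof of this statement --- it is quoted verbatim as an auxiliary lemma from Bhattacharya and Rao (2010), so there is no internal proof to compare against, and your sketch reconstructs the argument of the cited source. Two small caveats: the bound $\Vert\mathbb{E}Y_j\Vert\le n^{-(s-1)/2}\Delta_{n,j,s}$ that drives part (ii) is really the identity $\mathbb{E}Y_j=\mathbb{E}Y_j-\mathbb{E}X_j$ combined with the first inequality of (ii), and hence requires the standing zero-mean assumption on $X_j$, which the lemma as transcribed omits (as it also miswrites the centering as $Z_j=Y_j-\mathbb{E}Z_j$ and the first truncation region in (v) as $\{\Vert X_j\Vert\le\epsilon^{1/2}\}$ rather than $\{\Vert X_j\Vert\le\epsilon n^{1/2}\}$); and in part (iii) the constant $2$ does not come from naively adding the two bounds of (ii) with $\vert\alpha\vert=2$ (which would give a larger constant), but from writing $d_{il}-v_{il}$ directly as the second-moment truncation error plus the product term $\mathbb{E}Y_j^{(i)}\mathbb{E}Y_j^{(l)}$, each of which is separately of order $n^{-(s-2)/2}\bar\Delta_{n,s}$. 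With those adjustments your inventory of truncation and centering errors goes through exactly as in the source.
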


	\begin{lemma}[Lemma 14.6 of Bhattacharya and Rao (2010)]\label{lemma: 14.6}
		Assume that $V=I$, and $\bar \Delta_{n,s}\le \frac{n^{-(s-2)/2}}{8k}$ holds for some $s\ge 3$. Then for every integer r, $0\le r\le s-2$, one has
		\begin{align*}
			n^{-r/2}|P_r(-&\phi:\{\chi_\bfv\})(x)-P_r(-\phi_{0, D}:\{\chi'_v\})(x)|\\
			&\le c \bar\Delta_{n,s}n^{-(s-2)/2}(1+\|x\|^{3r+2})\exp\Big(-\frac{\|x\|^2}{6}+\|x\|\Big),\quad (x\in R^k),
		\end{align*}
		and
		\begin{align*}
			n^{-r/2}|P_r(-&\phi:\{\chi_\bfv\})(x+a_n)-P_r(-\phi:\{\chi_\bfv\})(x)|\\
			&\le c \bar\Delta_{n,s}n^{-(s-2)/2}(1+\|x\|^{3r+1})\exp\Big(-\frac{\|x\|^2}{2}+\frac{\|x\|}{8k^{1/2}}\Big),\quad (x\in R^k),
		\end{align*}
		where
		\[a_n=n^{-1/2}\sum_{j=1}^n \mathbb{E}Y_j.\]
	\end{lemma}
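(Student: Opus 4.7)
The plan is to exploit the explicit structure of $P_r(-\phi:\{\chi_\bfv\})$ as a finite linear combination of terms $c_\alpha(\{\chi_\bfv\})\, D^\alpha\phi(x)$, where each coefficient is a polynomial in the cumulants $\chi_\bfv$ with $3\le|\bfv|\le r+2$ (compare equation (1.1.13) in the text) and $|\alpha|\le 3r$. Each $D^\alpha\phi$ equals a Hermite polynomial of degree $|\alpha|$ times $\phi$. The two inequalities then reduce to (a) bounding the perturbation of the polynomial coefficients and the Gaussian factor when the underlying law changes from $\{\chi_\bfv\}$ on $\phi$ to $\{\chi'_v\}$ on $\phi_{0,D}$, and (b) bounding the change under translation by $a_n$.

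For the first inequality I would split the difference additively,
\[
P_r(-\phi:\{\chi_\bfv\}) - P_r(-\phi_{0,D}:\{\chi'_v\}) = \bigl[P_r(-\phi:\{\chi_\bfv\}) - P_r(-\phi:\{\chi'_v\})\bigr] + \bigl[P_r(-\phi:\{\chi'_v\}) - P_r(-\phi_{0,D}:\{\chi'_v\})\bigr].
\]
In the first bracket only the cumulant-polynomial coefficients differ; since cumulants are polynomials in moments, Lemma \ref{lemma: 14.1}(ii) yields $|\chi_\bfv - \chi'_v|\le C n^{-(s-|\bfv|)/2}\bar\Delta_{n,s}$, and factoring a common difference out of the products of cumulants gives a bound of the required form $C\bar\Delta_{n,s}n^{-(s-2)/2}(1+\|x\|^{3r})\phi(x)$. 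In the second bracket the cumulants agree but the Gaussian kernel differs. Using $\|D-I\|\le 2\bar\Delta_{n,s}n^{-(s-2)/2}$ from Lemma \ref{lemma: 14.1}(iii), I would expand $\phi_{0,D}$ about $D=I$ via
\[
\phi_{0,D}(x) = \phi(x)\,(\det D)^{-1/2}\exp\bigl(\tfrac12\langle x,(I-D^{-1})x\rangle\bigr),
\]
apply $|e^a-1|\le |a|\,e^{|a|}$, and differentiate $|\alpha|$ times in $x$ using Hermite identities relative to a general covariance matrix. The quadratic-in-$x$ exponent, partially absorbed into the Gaussian, is the source of the factor $\exp(-\|x\|^2/6)$ (one third of the Gaussian decay is surrendered to dominate the perturbation term), while the $\exp(\|x\|)$ contribution comes from bounding the Hermite-type polynomials that appear after differentiation.

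For the second inequality, observe first that $\mathbb{E} X_j = 0$ implies $\mathbb{E} Y_j = -\mathbb{E}(X_j\,I_{\|X_j\|>n^{1/2}})$, so a standard truncation estimate yields
\[
\|a_n\| \le n^{-1/2}\sum_{j=1}^n \mathbb{E}\bigl[\|X_j\|\,I_{\|X_j\|>n^{1/2}}\bigr] \le n^{-(s-2)/2}\bar\Delta_{n,s},
\]
which under the hypothesis $\bar\Delta_{n,s}\le n^{-(s-2)/2}/(8k)$ is at most $1/(8k)$. Then by the mean value theorem,
\[
P_r(-\phi:\{\chi_\bfv\})(x+a_n) - P_r(-\phi:\{\chi_\bfv\})(x) = a_n\cdot\int_0^1 \nabla\bigl[P_r(-\phi:\{\chi_\bfv\})\bigr](x+t a_n)\,dt.
\]
The gradient is a Hermite-type polynomial of degree $3r+1$ times $\phi$, contributing the factor $(1+\|x\|^{3r+1})$; the bound $\phi(x+t a_n)\le \phi(x)\exp(\|x\|\|a_n\|+\|a_n\|^2/2)\le \phi(x)\exp(\|x\|/(8k^{1/2}))$ for all sufficiently large $n$ absorbs the Gaussian shift and produces the claimed exponent $-\|x\|^2/2 + \|x\|/(8k^{1/2})$, while the leading $\|a_n\|$ factor produces the desired size $\bar\Delta_{n,s}n^{-(s-2)/2}$.

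The main obstacle I anticipate is the second bracket of the first inequality: tracking how derivatives $D^\alpha\phi_{0,D}$ compare to $D^\alpha\phi$ uniformly in the covariance perturbation. This requires Hermite identities relative to a general covariance matrix, a careful Taylor expansion of $\phi_{0,D}$ in the matrix parameter $D$ around $I$, and a deliberate sacrifice of part of the Gaussian decay (yielding $-\|x\|^2/6$ rather than $-\|x\|^2/2$) in order to dominate the polynomial-times-exponential error terms that would otherwise diverge as $\|x\|\to\infty$; the translation estimate in the second inequality, by contrast, is a relatively routine mean value argument once the bound on $\|a_n\|$ is in place.
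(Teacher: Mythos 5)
This statement is one of the imported results in the appendix of auxiliary lemmas: the paper attributes it to Lemma~14.6 of Bhattacharya and Rao (2010) and gives no proof of its own, so there is no in-paper argument to compare your proposal against. Judged on its own terms, your sketch is a faithful reconstruction of the standard textbook proof: representing $P_r$ as a cumulant-weighted combination of Gaussian derivatives, telescoping the difference of cumulant products via the moment perturbation bounds of Lemma~\ref{lemma: 14.1}(ii), controlling the kernel change through $\|D-I\|\le 2\bar\Delta_{n,s}n^{-(s-2)/2}$ from Lemma~\ref{lemma: 14.1}(iii) together with the identity $\phi_{0,D}(x)=\phi(x)(\det D)^{-1/2}\exp\bigl(\tfrac12\langle x,(I-D^{-1})x\rangle\bigr)$, and handling the translation by a mean value argument after the estimate $\|a_n\|\le n^{-(s-2)/2}\bar\Delta_{n,s}$. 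The exponent accounting is right: with $\|I-D^{-1}\|\le 1/(2k)$ the perturbed quadratic form costs at most $\exp(\|x\|^2/4)$ against $\exp(-\|x\|^2/2)$, which is where $-\|x\|^2/6$ (and the degree $3r+2$, from the quadratic form brought down by $|e^a-1|\le|a|e^{|a|}$) comes from. Two small bookkeeping points: the cumulant-difference bound for a block with $|v_i|=j_i+2$ is only $O\bigl(n^{-(s-r-2)/2}\bar\Delta_{n,s}\bigr)$, so the stated target $\bar\Delta_{n,s}n^{-(s-2)/2}$ is reached only after multiplying by the prefactor $n^{-r/2}$ appearing on the left side of the lemma --- your phrasing suggests the bracket alone already has that size, which it does not for $r\ge 1$; and the translation bound $\phi(x+ta_n)\le\phi(x)\exp(\|x\|\,\|a_n\|)$ holds for every $n$ satisfying the hypothesis, so no ``sufficiently large $n$'' caveat is needed. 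Neither point is a genuine gap.
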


	\begin{lemma}[Lemma 11.6 of Bhattacharya and Rao (2010)]\label{lemma:11.6}
		Let $g$ be a real-valued function in $L^1(R^k)$ satisfying
		\[\int\|x\|^{k+1}|g(x)|dx<\infty.\]
		Then there exists a positive constant $c(k)$ depending only on $k$ (and not on $g$) such that
		\[\|g\|_1\le c(k)\max_{|\beta|=0,k+1}\int|D^{\beta}\hat g(t)|dt.\]
	\end{lemma}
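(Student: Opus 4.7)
The plan is Fourier-analytic: I will convert the hypotheses on the derivatives of $\hat g$ into pointwise bounds on $g$, combine them into an integrable envelope, and integrate. First, I may assume without loss of generality that the right-hand side of the claimed inequality is finite, since otherwise the bound is vacuous; in particular this forces $\hat g\in L^1(R^k)$ together with $D^\beta\hat g\in L^1(R^k)$ for every multi-index $\beta$ with $|\beta|=k+1$. Because $\hat g\in L^1$, the Fourier inversion theorem gives that $g$ equals almost everywhere the continuous function $(2\pi)^{-k}\int\hat g(t)e^{i\langle x,t\rangle}\,dt$, from which I read off the crude bound $\|g\|_\infty\le(2\pi)^{-k}\|\hat g\|_1$.

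Next, I exploit the hypothesis $\int\|x\|^{k+1}|g(x)|\,dx<\infty$, which forces $(-ix)^\beta g(x)\in L^1(R^k)$ for every $|\beta|\le k+1$. An induction on $|\beta|$, differentiating under the integral sign in $\hat g(t)=\int g(x)e^{-i\langle t,x\rangle}\,dx$ with majorant $\|x\|^{|\gamma|}|g(x)|$ at stage $|\gamma|$, shows that the Fourier transform of $(-ix)^\beta g$ is precisely $D^\beta\hat g(t)$. When $|\beta|=k+1$ this transform lies in $L^1$ by our standing assumption, so Fourier inversion applied now to $(-ix)^\beta g$ yields the pointwise bound $|x^\beta g(x)|\le(2\pi)^{-k}\|D^\beta\hat g\|_1$. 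Using the elementary equivalence $\|x\|^{k+1}\le c_k\sum_{|\beta|=k+1}|x^\beta|$ (by homogeneity on the unit sphere) and summing over the finitely many multi-indices of length $k+1$, these combine with the $L^\infty$ bound to give the envelope
\[(1+\|x\|^{k+1})|g(x)|\le c(k)\max_{|\beta|\in\{0,k+1\}}\int|D^\beta\hat g(t)|\,dt.\]

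Finally, dividing by $1+\|x\|^{k+1}$ and integrating over $R^k$ I obtain
\[\|g\|_1\le c(k)\max_{|\beta|\in\{0,k+1\}}\int|D^\beta\hat g(t)|\,dt\cdot\int_{R^k}\frac{dx}{1+\|x\|^{k+1}},\]
and the last factor is finite because in polar coordinates it equals $|S^{k-1}|\int_0^\infty r^{k-1}(1+r^{k+1})^{-1}\,dr<\infty$; the integrand behaves like $r^{-2}$ at infinity, which requires the exponent to exceed $k$ and thus explains the choice of the $(k+1)$-th moment in the hypothesis. The main delicate point is the middle step, namely identifying $D^\beta\hat g$ as the Fourier transform of $(-ix)^\beta g$ when only $L^1$ data (not Schwartz functions) are available. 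This is handled by the inductive differentiation-under-the-integral argument with the stated dominated-convergence majorants, and then applying the $L^1$-Fourier inversion theorem to $(-ix)^\beta g$ once its Fourier transform is known to lie in $L^1$.
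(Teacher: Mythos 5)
Your proof is correct. Note that the paper does not prove this statement at all --- it is quoted verbatim as Lemma 11.6 of Bhattacharya and Rao (2010) in the appendix of auxiliary results, so there is no internal proof to compare against. Your argument (establish $D^\beta\hat g = \widehat{(-ix)^\beta g}$ via differentiation under the integral using the $(k+1)$-th moment hypothesis, apply $L^1$ Fourier inversion to get the pointwise envelope $(1+\|x\|^{k+1})|g(x)|\le c(k)\max_{|\beta|\in\{0,k+1\}}\|D^\beta\hat g\|_1$, and integrate against the integrable weight $(1+\|x\|^{k+1})^{-1}$) is precisely the standard proof of this classical lemma, and all the delicate points --- the WLOG reduction to a finite right-hand side, the non-Schwartz justification of the derivative formula, the comparison $\|x\|^{k+1}\le c_k\sum_{|\beta|=k+1}|x^\beta|$, and the convergence of $\int r^{k-1}(1+r^{k+1})^{-1}\,dr$ --- are handled correctly.
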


	\begin{lemma}[Lemma 6.2 of Bhattacharya and Rao (2010)] \label{lemma: 7.2}
		Let $X$ be a random vector in $R^k$ having a finite $s$-th absolute moment $\rho_s$ for some positive $s$. If $X$ is not degenerate at $0$,
		\begin{itemize}
			\item[(i)] $r\to\log\rho_r$ is a convex function on $[0,s]$.
			\item[(ii)] $r\to\rho_r^{1/r}$ is nondecreasing on $[0,s]$.
			\item[(iii)] $r\to(\rho_r/\rho_2^{r/2})^{1/(r-2)}$ is nondecreasing on $(2,s]$ if $s>2$.
		\end{itemize}
	\end{lemma}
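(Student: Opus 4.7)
The plan is to derive all three statements from a single application of Hölder's inequality, then bootstrap (ii) and (iii) from (i) via an elementary convexity observation. Throughout, nondegeneracy of $X$ at $0$ guarantees $\rho_r > 0$ on $(0,s]$ (and $\rho_0 = 1$), so $\log \rho_r$ is well-defined on $[0,s]$.

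For part (i), I would fix $r_1, r_2 \in [0,s]$ and $\alpha \in [0,1]$, set $r = \alpha r_1 + (1-\alpha) r_2$, and apply Hölder's inequality with conjugate exponents $p = 1/\alpha$ and $q = 1/(1-\alpha)$ to the factorization $\|X\|^r = \|X\|^{\alpha r_1}\cdot \|X\|^{(1-\alpha)r_2}$. This yields
\[
\rho_r \;=\; \mathbb{E}\bigl[\|X\|^{\alpha r_1}\|X\|^{(1-\alpha)r_2}\bigr] \;\le\; \bigl(\mathbb{E}\|X\|^{r_1}\bigr)^{\alpha}\bigl(\mathbb{E}\|X\|^{r_2}\bigr)^{1-\alpha} \;=\; \rho_{r_1}^{\alpha}\rho_{r_2}^{1-\alpha},
\]
and taking logarithms gives the convexity of $r \mapsto \log \rho_r$.

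For part (ii), I would invoke the elementary fact that if $f$ is convex on $[0,s]$ with $f(0)=0$, then $f(r)/r$ is nondecreasing on $(0,s]$: indeed, for $0 < r_1 < r_2 \le s$, writing $r_1 = (r_1/r_2)\,r_2 + (1-r_1/r_2)\cdot 0$ and applying convexity yields $f(r_1) \le (r_1/r_2)\,f(r_2)$. Applied to $f(r) = \log \rho_r$ with $f(0) = \log 1 = 0$, this shows $\tfrac{1}{r}\log \rho_r$ is nondecreasing, hence so is $\rho_r^{1/r}$. For part (iii), the same trick works once the anchor point is shifted from $0$ to $2$: set $g(r) = \log \rho_r - (r/2)\log \rho_2$, which is convex on $[0,s]$ as the sum of the convex function from (i) and a linear function, and satisfies $g(2) = 0$. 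By the shifted version of the same lemma (applied on $[2,s]$), $g(r)/(r-2)$ is nondecreasing on $(2,s]$, which is exactly the monotonicity of $(\rho_r/\rho_2^{r/2})^{1/(r-2)}$.

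There is essentially no hard step here; the entire argument is three lines of Hölder plus a convexity reshuffle. The only point requiring care is the boundary/degeneracy issue: one must check that $\rho_r$ is strictly positive wherever a logarithm is taken, which is where the hypothesis that $X$ is not degenerate at $0$ enters. Beyond that, the proof is routine and purely analytic, so I would expect the write-up to be very short.
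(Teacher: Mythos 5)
Your proof is correct. The paper does not prove this statement at all---it is imported verbatim as an auxiliary result (Lemma 6.2 of Bhattacharya and Rao, 2010) in the appendix of auxiliary lemmas---and your argument (H\"older's inequality giving log-convexity of $r\mapsto\rho_r$, then the ``convex function vanishing at an anchor point has nondecreasing difference quotients'' observation applied at $r=0$ for (ii) and at $r=2$ for (iii)) is precisely the standard Lyapunov-type proof found in that reference; the only points needing care, namely finiteness of $\rho_r$ for $r\le s$ (via Jensen from $\rho_s<\infty$) and strict positivity of $\rho_r$ (from nondegeneracy at $0$), are exactly the ones you flag.
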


	\begin{lemma}[Corollary 14.4 of Bhattacharya and Rao (2010)] \label{lemma: 14.4}
		Suppose $\rho_s<\infty$ for some $s\ge 3$, and that $V=I$. Let $g_j$ denote the characteristic function of $Z_j (1\le j\le n)$. Then if
		\[\|t\|\le \frac{n^{1/2}}{16\rho_3},\qquad \bar\Delta_{n,s}\le\frac{n^{(s-2)/2}}{8k},\]
		then,
		\[\bigg|\bigg(D^{\alpha}\prod_{j=1}^ng_j\bigg)\bigg(\frac{t}{n^{1/2}}\bigg)\bigg|\le c_1(\alpha, k)(1+\|t\|^{|\alpha|})\exp\Big(-\frac{5}{24}\|t\|^2\Big).\]
	\end{lemma}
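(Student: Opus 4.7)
The plan is to apply Leibniz's rule for the derivative of a product, then estimate each resulting factor separately. By the chain rule, $D^\alpha[g_j(t/\sqrt{n})] = n^{-|\alpha|/2} (D^\alpha g_j)(t/\sqrt{n})$, so Leibniz's formula expresses $(D^\alpha \prod_{j=1}^n g_j)(t/\sqrt{n})$ as a finite sum over partitions $\alpha = \alpha^{(1)} + \cdots + \alpha^{(n)}$ of nonnegative integral vectors, every term carrying an overall factor $n^{-|\alpha|/2}$. In each summand, at most $|\alpha|$ of the $\alpha^{(j)}$ are nonzero, so at least $n - |\alpha|$ of the $g_j$ appear without any derivative.

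Next I would bound the two kinds of factors separately. For each $j$ with $|\alpha^{(j)}| \ge 1$, the crude estimate $|(D^{\alpha^{(j)}} g_j)(s)| \le E\|Z_j\|^{|\alpha^{(j)}|}$, combined with Lemma \ref{lemma: 7.2} (monotonicity of $\rho_r^{1/r}$) and Lemma \ref{lemma: 14.1}, yields a bound depending only on $|\alpha|$ and on moments up to order $|\alpha|$. For each undifferentiated factor, I would use the Taylor expansion of $\log g_j$ near zero: since $EZ_j = 0$ and the covariance $V_j$ of $Z_j$ lies close to $V = I$ by assertion (iii) of Lemma \ref{lemma: 14.1}, one obtains
\[
|g_j(s)| \le \exp\bigl(-\tfrac12\langle s, V_j s\rangle + C\|s\|^3 \rho_{3,j}\bigr)
\]
for $\|s\|$ small enough. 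Substituting $s = t/\sqrt{n}$ and multiplying over the (at least $n - |\alpha|$) undifferentiated indices gives an exponent of the form $-\frac{1}{2n}\sum_{j} \langle t, V_j t\rangle + O(\|t\|^3 \rho_3/\sqrt{n})$.

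The final step is arithmetic bookkeeping. Under $\|t\| \le n^{1/2}/(16\rho_3)$ the cubic remainder is bounded by a small fraction of $\|t\|^2$, and under $\bar\Delta_{n,s} \le n^{(s-2)/2}/(8k)$ each $V_j$ is close enough to $I$ that $\frac{1}{2n}\sum_j \langle t, V_j t\rangle$ differs from $\tfrac12\|t\|^2$ by a controllable amount. After subtracting the losses coming from (i) the at most $|\alpha|$ missing factors, (ii) the truncation deformation of the covariances, and (iii) the cubic remainder, the Gaussian exponent bottoms out at $\tfrac{5}{24}\|t\|^2$. Collecting the combinatorial constants from Leibniz's formula into a single $c_1(\alpha, k)$, and absorbing the polynomial $(1 + \|t\|^{|\alpha|})$ coming from the differentiated factors, yields the stated bound. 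The main obstacle is precisely this final bookkeeping: quantifying how much each of the three sources of loss eats into $\tfrac12\|t\|^2$ with the explicit numerical thresholds $1/(16\rho_3)$ and $1/(8k)$, so that the remaining coefficient is no smaller than $5/24$.
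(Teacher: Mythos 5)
This lemma is not proved in the paper at all: it is quoted verbatim as an auxiliary result (Corollary 14.4 of Bhattacharya and Rao, 2010), so there is no in-paper argument to compare against. Your sketch reconstructs exactly the standard proof from that reference --- Leibniz's rule with the chain-rule factor $n^{-|\alpha|/2}$, moment bounds $|D^{\beta}g_j(s)|\le \mathbb{E}\|Z_j\|^{|\beta|}$ for the differentiated factors (with Lemma \ref{lemma: 14.1}(v) cancelling the $n^{-|\alpha|/2}$), and the bound $|g_j(s)|\le \exp(-\tfrac12\langle s,V_js\rangle+C\|s\|^{3}\rho_{3,j}')$ for the remaining ones --- and the outline is correct. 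The one caveat is that the entire quantitative content of the lemma, namely that the three sources of loss under the explicit thresholds $\|t\|\le n^{1/2}/(16\rho_3)$ and $\bar\Delta_{n,s}\le n^{(s-2)/2}/(8k)$ leave a Gaussian coefficient of at least $5/24$, is acknowledged but deferred rather than carried out, so as written the argument is a correct plan rather than a complete proof.
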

	
	\begin{lemma}[Lemma 6.3 of Bhattacharya and Rao (2010)] \label{lemma: 6.3}
		Let $X$ be a random vector in $R^k$ having a finite $s$-th absolute moment $\rho_s$ for some positive integer $s$. Then for nonnegative integral vectors $v$ satisfying $|v|<s$,
		\[|\mu_v|\le\mathbb{E}|X^v|\le\rho_{|v|},\]
		and there exists a constant $c_1(v)$ depending only on $v$ such that
		\[|\chi_v|\le c_1(v)\rho_{|v|}.\]
	\end{lemma}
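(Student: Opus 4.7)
The plan is to prove the two assertions separately, starting with the moment inequality and then leveraging it for the cumulant bound. For the first inequality, I would observe that $|\mu_v|=|\mathbb{E}X^v|\le\mathbb{E}|X^v|$ by the triangle inequality for integrals, and that $|X^v|=\prod_{i=1}^{k}|X_i|^{v_i}\le\prod_{i=1}^{k}\|X\|^{v_i}=\|X\|^{|v|}$ pointwise, since each coordinate is dominated by the Euclidean norm. Taking expectations then gives $\mathbb{E}|X^v|\le\mathbb{E}\|X\|^{|v|}=\rho_{|v|}$, which completes the first part in one line.

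For the cumulant bound, the core ingredient is the classical combinatorial identity that expresses a joint cumulant as a signed sum over tuples of nonnegative integral vectors $v^{(1)},\dots,v^{(m)}$ with $|v^{(i)}|\ge 1$ and $v^{(1)}+\cdots+v^{(m)}=v$, each term being a multinomial constant times a product $\mu_{v^{(1)}}\cdots\mu_{v^{(m)}}$. This identity is obtained by expanding $\log\hat G(t)=\log(1+(\hat G(t)-1))$ through the power series $\log(1+u)=\sum_{m\ge 1}(-1)^{m+1}u^m/m$ and matching the coefficient of $(it)^v/v!$ against the definition of $\chi_v$. The number of admissible tuples depends only on $v$, so it is enough to bound each product $|\mu_{v^{(1)}}\cdots\mu_{v^{(m)}}|$ uniformly by a constant multiple of $\rho_{|v|}$.

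To produce that uniform bound, I would first apply the already-proved part of the lemma to each factor, giving $|\mu_{v^{(i)}}|\le\rho_{|v^{(i)}|}$. Then I would invoke the Lyapunov-type monotonicity in Lemma \ref{lemma: 7.2}(ii), which says that $r\mapsto\rho_r^{1/r}$ is nondecreasing on $[0,|v|]$; since $|v^{(i)}|\le|v|$, this yields $\rho_{|v^{(i)}|}\le\rho_{|v|}^{|v^{(i)}|/|v|}$. Multiplying and using $\sum_{i=1}^{m}|v^{(i)}|=|v|$ makes the exponents telescope to $1$, so $\rho_{|v^{(1)}|}\cdots\rho_{|v^{(m)}|}\le\rho_{|v|}$. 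Summing absolute values over the finitely many terms in the cumulant expansion then gives $|\chi_v|\le c_1(v)\rho_{|v|}$ with $c_1(v)$ an explicit function of $v$ alone.

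The main (but genuinely mild) obstacle is the bookkeeping: writing the cumulant-moment identity carefully enough to see that the combinatorial constant in front of each product depends only on $v$, and recognizing that Lyapunov is exactly the lever that makes a product of lower-order absolute moments collapse into a single $\rho_{|v|}$ rather than some larger power of it. No deeper analytic ingredient is required beyond Jensen's inequality and the monotonicity in Lemma \ref{lemma: 7.2}.
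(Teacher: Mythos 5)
The paper does not prove this statement; it is imported verbatim as an auxiliary result from Bhattacharya and Rao (2010), so there is no in-paper argument to compare against. Your proof is correct and is essentially the standard one from that reference: the coordinatewise bound $|X^v|\le\|X\|^{|v|}$ for the moment inequality, followed by the cumulant--moment expansion and the Lyapunov monotonicity of $r\mapsto\rho_r^{1/r}$ (Lemma \ref{lemma: 7.2}(ii)) to collapse each product $\rho_{|v^{(1)}|}\cdots\rho_{|v^{(m)}|}$ into $\rho_{|v|}$; the only (trivial) caveat is that Lemma \ref{lemma: 7.2} assumes $X$ not degenerate at $0$, a case in which the claimed bounds hold vacuously.
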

	
	\begin{lemma}[Corollary 11.2 of Bhattacharya and Rao (2010)]\label{lemma:11.2}
		Let $\mu$ be a finite measure and $\nu$ a finite signed measure on $R^k$. Let $\epsilon$ be a positive number and $K_{\epsilon}$ a probability measure on $R^k$ satisfying
		\[K_{\epsilon}(B(0:\epsilon))=1.\]
		Then for every real-valued, Borel-measurable function f on $R^k$ that is bounded on compacts,
		\[\Big|\int f d(\mu-\nu)\Big|\le \gamma(f:\epsilon)+\int\omega_f(\cdot : 2\epsilon)d\nu^+,\]
		where
		\[\gamma(f:\epsilon)=max\Big\{\int M_f(\cdot:\epsilon)d(\mu-\nu)*K_{\epsilon}, \int m_f(\cdot:\epsilon)d(\mu-\nu)*K_{\epsilon}\Big\}\]
		provided that $|M_f(\cdot : 2\epsilon)|$ and $|m_f(\cdot : 2\epsilon)|$ are integrable with respect to $\mu$ and $|\nu|$. If, in addition, f is bounded and 
		\[\mu(R^k)=\nu(R^k),\]
		then
		\[\Big|\int f d(\mu-\nu)\Big| \le\frac12\omega_f(R^k)\|(\mu-\nu)*K_{\epsilon}\|+\int  \omega_f(\cdot:2\epsilon)d\nu^+.\]
	\end{lemma}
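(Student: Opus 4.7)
The plan is to exploit the key property that $K_{\epsilon}$ is concentrated on $B(0:\epsilon)$, so that convolution with $K_\epsilon$ shifts points by at most $\epsilon$. This lets one sandwich the original integrand pointwise between the lower and upper local envelopes $m_f(\cdot:\epsilon)$ and $M_f(\cdot:\epsilon)$, and then transfer the estimate onto the smoothed signed measure $(\mu-\nu)*K_\epsilon$ via Fubini. The bound on the principal term will be $\gamma(f:\epsilon)$, while the residual produced by replacing $f$ with its smoothed version contributes the modulus-of-continuity term $\int \omega_f(\cdot:2\epsilon)\,d\nu^+$.

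Concretely, I would first record the pointwise inequalities: whenever $\|z\|\le\epsilon$ the point $x$ lies in $B(x+z:\epsilon)$, so
$$m_f(x+z:\epsilon)\le f(x)\le M_f(x+z:\epsilon).$$
Integrating against $K_\epsilon(dz)$ and then against a nonnegative measure $\lambda$, Fubini gives
$$\int m_f(y:\epsilon)\,d(\lambda*K_\epsilon)(y)\le \int f\,d\lambda\le \int M_f(y:\epsilon)\,d(\lambda*K_\epsilon)(y).$$
Applying this sandwich separately to the Jordan pieces of $\mu$ and $\nu$, and choosing in each piece whichever of the two envelopes produces the favorable sign, collapses the principal upper bound on $\int f\,d(\mu-\nu)$ onto one of $\int M_f\,d((\mu-\nu)*K_\epsilon)$ or $\int m_f\,d((\mu-\nu)*K_\epsilon)$; the symmetric argument for the lower bound then yields $|\int f\,d(\mu-\nu)|\le \gamma(f:\epsilon) + (\text{residual})$.

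The residual comes from the slackness in the sandwich, namely from $M_f(x+z:\epsilon)-f(x)$ and $f(x)-m_f(x+z:\epsilon)$ for $\|z\|\le\epsilon$. Since $B(x+z:\epsilon)\subset B(x:2\epsilon)$, both slacks are bounded by $\omega_f(x:2\epsilon)$, which is why the $2\epsilon$ (rather than $\epsilon$) appears in the statement. The main obstacle is the bookkeeping that removes contributions from $\mu$ and from $\nu^-$ and leaves only $\int \omega_f(\cdot:2\epsilon)\,d\nu^+$: this is arranged by orienting the sandwich so that the slack is always integrated against $\nu^+$, which is precisely what the $\max$ in the definition of $\gamma(f:\epsilon)$ permits (the choice of $M_f$ vs.\ $m_f$ is made adversarially to absorb the unwanted contributions into the principal term).

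For the second inequality, the hypothesis $\mu(R^k)=\nu(R^k)$ means that $(\mu-\nu)*K_\epsilon$ is a zero-mass signed measure, so $\int c\,d((\mu-\nu)*K_\epsilon)=0$ for every constant $c$. Choosing $c=\tfrac12(\sup_{R^k}f+\inf_{R^k}f)$ and replacing $f$ by $f-c$ leaves $\int f\,d((\mu-\nu)*K_\epsilon)$ unchanged but reduces $\sup|f-c|$ to $\tfrac12\omega_f(R^k)$; the principal term is then bounded by $\tfrac12\omega_f(R^k)\,\|(\mu-\nu)*K_\epsilon\|$ via the definition of the total variation norm, while the residual analysis is unchanged.
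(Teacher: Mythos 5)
The paper does not prove this lemma at all: it is imported verbatim as Corollary 11.2 of Bhattacharya and Rao (2010) and used as a black box in Section \ref{sec6}, so there is no internal proof to compare against. Your reconstruction is the standard smoothing-inequality argument and is essentially correct: the pointwise sandwich $m_f(x+z:\epsilon)\le f(x)\le M_f(x+z:\epsilon)$ for $\|z\|\le\epsilon$, Fubini against $K_\epsilon$, the inclusion $B(x+z:\epsilon)\subset B(x:2\epsilon)$ to bound the slack by $\omega_f(x:2\epsilon)$, and the centering trick for the second inequality are exactly the right ingredients. Two points deserve tightening. First, the middle step is loose as written: applying the sandwich ``separately to the Jordan pieces of $\nu$'' with different envelopes would not recombine into $\int M_f\,d((\mu-\nu)*K_\epsilon)$. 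The clean bookkeeping is to use the sandwich only on the nonnegative measure $\mu$, and for the signed measure $\nu$ to use the exact Fubini identity $\int M_f(\cdot:\epsilon)\,d(\nu*K_\epsilon)-\int f\,d\nu=\iint\bigl[M_f(x+z:\epsilon)-f(x)\bigr]\,dK_\epsilon(z)\,d\nu(x)$, whose integrand lies in $[0,\omega_f(x:2\epsilon)]$; dropping the $\nu^-$ contribution (which has favorable sign) leaves exactly $\int\omega_f(\cdot:2\epsilon)\,d\nu^+$. Second, your symmetric lower bound produces $-\int m_f(\cdot:\epsilon)\,d((\mu-\nu)*K_\epsilon)$, which is dominated by $\gamma(f:\epsilon)$ only when $\gamma$ is defined as the maximum of the \emph{absolute values} of the two integrals, as in the original Bhattacharya--Rao statement; the transcription in this paper omits the absolute values, so your argument is consistent with the source rather than with the paper's (mis)statement. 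For the second inequality, note also that the principal term involves $M_f$ and $m_f$ rather than $f$ itself, but since $M_{f-c}=M_f-c$ and $\omega_{f-c}=\omega_f$, applying the first inequality to $f-c$ with $c=\tfrac12(\sup f+\inf f)$ yields the claimed bound exactly as you describe.
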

	
	\begin{lemma}[Lemma 11.6 of Bhattacharya and Rao (2010)]\label{lemma:11.6}
		Let $g$ ba a real-valued function in $L^1(R^k)$ satisfying
		\[\int\|x\|^{k+1}|g(x)|dx<\infty.\]
		Then there exists a positive constant $c(k)$ depending only on $k$ (and not on $g$) such that
		\[\|g\|_1\le c(k)\max_{|\beta|=0,k+1}\int|D^{\beta}\hat g(t)|dt.\]
	\end{lemma}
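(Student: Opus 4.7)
The plan is to prove this by a standard Fourier inversion argument, so that pointwise control of $g$ and of $x^\beta g$ (for $|\beta|=k+1$) follows from $L^1$-bounds on $\hat g$ and its derivatives of order $k+1$. I may assume the right-hand side is finite, for otherwise the inequality is trivial; write $M=\max_{|\beta|=0,k+1}\int|D^\beta \hat g(t)|\,dt<\infty$. In particular $\hat g\in L^1(R^k)$, and the hypothesis $\int\|x\|^{k+1}|g(x)|\,dx<\infty$ guarantees that $\hat g$ is in fact of class $C^{k+1}$ with
\[
D^\beta \hat g(t)=i^{|\beta|}\int e^{it\cdot x}x^\beta g(x)\,dx,\qquad |\beta|\le k+1.
\]
Since each $D^\beta \hat g$ (for $|\beta|=0$ and, by assumption, also for $|\beta|=k+1$) lies in $L^1$, Fourier inversion gives continuous representatives
\[
g(x)=(2\pi)^{-k}\int e^{-it\cdot x}\hat g(t)\,dt,\qquad x^\beta g(x)=(2\pi)^{-k}(-i)^{|\beta|}\int e^{-it\cdot x}D^\beta \hat g(t)\,dt.
\]

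From these identities, I would extract the pointwise bounds
\[
|g(x)|\le (2\pi)^{-k}\int|\hat g(t)|\,dt\le (2\pi)^{-k} M,\qquad |x^\beta g(x)|\le (2\pi)^{-k}\int|D^\beta \hat g(t)|\,dt
\]
for every multi-index $\beta$ with $|\beta|=k+1$. Now for any $x\in R^k$, at least one coordinate satisfies $|x_{i^*}|\ge k^{-1/2}\|x\|$; choosing $\beta^\ast=(k+1)e_{i^\ast}$ gives $|x^{\beta^\ast}|=|x_{i^\ast}|^{k+1}\ge k^{-(k+1)/2}\|x\|^{k+1}$, hence
\[
\|x\|^{k+1}|g(x)|\le k^{(k+1)/2}(2\pi)^{-k}\max_{|\beta|=k+1}\int|D^\beta \hat g(t)|\,dt\le C_1(k)\,M.
\]
Combining with the $|\beta|=0$ bound yields the key pointwise estimate $(1+\|x\|^{k+1})|g(x)|\le C_2(k)\,M$ for all $x\in R^k$.

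Finally I would divide and integrate:
\[
\|g\|_1=\int_{R^k}|g(x)|\,dx\le C_2(k)\,M\int_{R^k}\frac{dx}{1+\|x\|^{k+1}}.
\]
Passing to polar coordinates, the remaining integral equals $\omega_{k-1}\int_0^{\infty}r^{k-1}/(1+r^{k+1})\,dr$, whose integrand behaves like $r^{-2}$ for large $r$ and is bounded near the origin, so it is a finite constant depending only on $k$. Setting $c(k)=C_2(k)\cdot\omega_{k-1}\int_0^\infty r^{k-1}(1+r^{k+1})^{-1}dr$ delivers the asserted inequality. There is no genuine obstacle here; the only care needed is to justify the differentiation under the integral that produces the inversion formula for $x^\beta g(x)$, but this is guaranteed by the moment hypothesis $\int\|x\|^{k+1}|g(x)|\,dx<\infty$ together with $D^\beta \hat g\in L^1$, which together make both sides of Fourier inversion absolutely convergent.
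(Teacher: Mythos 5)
Your proof is correct; the paper itself does not prove this statement but simply imports it as Lemma 11.6 of Bhattacharya and Rao (2010), and your argument is essentially the standard one from that source: Fourier inversion applied to $g$ and to $x^\beta g$ for $\beta=(k+1)e_{i}$, the coordinate-selection bound $|x_{i^*}|\ge k^{-1/2}\|x\|$, and integration of $(1+\|x\|^{k+1})^{-1}$. The only point needing care, which you handle, is that the inversion identities hold almost everywhere, which suffices for the final $L^1$ estimate.
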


	\begin{lemma}[Theorem 9.10 of Bhattacharya and Rao (2010)]\label{lemma: 9.10}
		Let $G$ be a probability measure on $R^k$ with zero mean, positive-definite covariance matrix $V$, and finite $s$-th absolute moment for some integer $s$ not smaller than $3$. Then there exist two positive constants $c_1, c_2$ such that for all $t$ in $R^k$ satisfying
		\[\|t\|\le c_1n^{1/2}\eta_s^{-1/(s-2)}\]
		one has, for all nonnegative integral vectors $\alpha$, $0\le|\alpha|\le s$,
		\begin{align*}
			\Big|D^{\alpha}\Big[\hat G^n(\frac{Bt}{n^{1/2}})-&\exp\{-\frac 12\|t\|^2\}\sum_{r=0}^{s-3}n^{-r/2}\tilde P_r(iBt:\{\chi_\bfv\})\Big]\Big|\\
			&\le\frac{c_2\eta_s}{n^{-(s-2)/2}}[\|t\|^{s-|\alpha|}+\|t\|^{3(s-2)+|\alpha|}]\exp\{-\frac{\|t\|^2}{4}\}.
		\end{align*}
		Where $\chi_\bfv$ is the vth cumulant of G, and $\eta_s=\int\|Bx\|^sG(dx)$. Here $B$ is the symmetric positive-definite matrix satisfying $B^2=V^{-1}$.
	\end{lemma}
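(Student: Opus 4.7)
The plan is to prove the lemma via a Taylor-plus-exponentiate strategy: expand $\log \hat G$ around the origin in cumulants, multiply by $n$, exponentiate, match the resulting formal power series in $n^{-1/2}$ with the Edgeworth polynomials $\tilde P_r$, and then bound the difference after $\alpha$-differentiation.

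First, I would reduce to the standardized case. The substitution $y = Bx$ transforms $G$ into a zero-mean, identity-covariance measure and turns $\hat G(Bt/n^{1/2})$ into the characteristic function of that transformed measure at $t/n^{1/2}$, with $\eta_s$ becoming its $s$-th absolute moment. After this normalization, I would Taylor expand $\log \hat G$ around $0$. The restriction $\|t\|\le c_1 n^{1/2}\eta_s^{-1/(s-2)}$ guarantees that $\eta_s\|t/n^{1/2}\|^{s-2}$ is small, so $|\hat G(t/n^{1/2})|$ stays bounded away from zero and the principal branch of $\log \hat G$ is analytic in a neighborhood of $0$. Because of the zero mean and identity covariance, the expansion produces
\[
n\log \hat G(t/n^{1/2}) = -\tfrac{1}{2}\|t\|^2 + \sum_{r=1}^{s-3} n^{-r/2}\, Q_r(it) + R_n(t),
\]
where each $Q_r(it)$ is a homogeneous polynomial of degree $r+2$ with coefficients built from cumulants of order $r+2$, and the remainder obeys $|R_n(t)|\le C\eta_s\|t\|^s n^{-(s-2)/2}$ by the standard estimate $|e^{iu}-\sum_{j<s}(iu)^j/j!|\le |u|^s$ integrated against $G$.

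Second, I would exponentiate and identify the formal series. Expanding the inner exponential $\exp(\sum n^{-r/2}Q_r(it))$ into a power series in $n^{-1/2}$ and truncating at order $n^{-(s-3)/2}$ produces precisely the polynomials $\tilde P_r(it:\{\chi_\bfv\})$ appearing in the statement. The difference
\[
\hat G^n(t/n^{1/2}) - \exp(-\|t\|^2/2)\sum_{r=0}^{s-3}n^{-r/2}\tilde P_r(it:\{\chi_\bfv\})
\]
is then controlled by $C\eta_s n^{-(s-2)/2}\exp(-\|t\|^2/2)\bigl[\|t\|^s+\|t\|^{3(s-2)}\bigr]$: the $\|t\|^s$ piece inherits from the residual Taylor error $R_n$, while the $\|t\|^{3(s-2)}$ piece comes from the leading omitted terms in the formal series, which are products of up to $s-3$ factors $Q_r$ of total degree at most $3(s-2)$.

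Finally, I would apply $D^\alpha$ via Leibniz's rule. Each derivative either lands on $\exp(-\|t\|^2/2)$, bringing out a polynomial factor of degree at most $|\alpha|$, or on one of the polynomial brackets, decreasing its degree by up to $|\alpha|$; combining these yields the asserted combination $\|t\|^{s-|\alpha|}+\|t\|^{3(s-2)+|\alpha|}$. Relaxing the Gaussian factor from $\exp(-\|t\|^2/2)$ to $\exp(-\|t\|^2/4)$ absorbs the polynomial overhead uniformly on the allowed range of $\|t\|$. The main obstacle is the bookkeeping in this final step: verifying that the degrees of the polynomial factors produced by the formal expansion of $\exp(\sum n^{-r/2}Q_r(it))$ interact correctly with the Leibniz terms to produce exactly the split $\|t\|^{s-|\alpha|}+\|t\|^{3(s-2)+|\alpha|}$, and ensuring that the implicit constants depend only on $\alpha$, $k$, and $s$ and not on $G$ beyond $\eta_s$.
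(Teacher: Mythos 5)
The paper does not actually prove this statement: it is imported verbatim as Theorem~9.10 of Bhattacharya and Rao (2010) and used as a black box in Section~\ref{A.1}, so there is no in-paper proof to compare against. Your sketch reproduces the standard textbook argument for that theorem (standardize by $B$, Taylor-expand $\log\hat G$ in cumulants on the range where $\hat G$ stays bounded away from zero, exponentiate and truncate the formal series in $n^{-1/2}$ to identify the $\tilde P_r$, then apply Leibniz for $D^\alpha$), and it is correct in outline. The one step you treat too lightly is the control of the tail of $\exp\bigl(\sum_{r=1}^{s-3}n^{-r/2}Q_r(it)\bigr)$ beyond order $n^{-(s-3)/2}$: the omitted terms form an infinite series of products $Q_{j_1}\cdots Q_{j_m}$, not just the ``leading'' ones of degree at most $3(s-2)$, and one must invoke the hypothesis $\|t\|\le c_1 n^{1/2}\eta_s^{-1/(s-2)}$ a second time (with $c_1$ small, together with the cumulant bound $|Q_j(it)|\le C\eta_s^{j/(s-2)}\|t\|^{j+2}$ from the moment inequalities) to get $\sum_r n^{-r/2}|Q_r(it)|\le \|t\|^2/8$; it is this estimate that dominates the entire tail and forces the degradation of the Gaussian weight from $e^{-\|t\|^2/2}$ to $e^{-\|t\|^2/4}$, rather than the absorption of polynomial overhead that you cite. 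Two further small points: the remainder bound for $\log\hat G$ is not literally the elementary inequality for $e^{iu}$ integrated against $G$ (that controls $\hat G$, and one must then divide by $\hat G$ and compose series, which is where the lower bound on $|\hat G|$ enters quantitatively); and the displayed error bound in the statement as transcribed in the paper contains a typo ($c_2\eta_s/n^{-(s-2)/2}$ should read $c_2\eta_s n^{-(s-2)/2}$) --- the version you work with is the correct one.
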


	\begin{lemma}[Lemma 2.1 of Bhattacharya and Ghosh (1978)]\label{lemma: 2.1}
		Assume $\rho_s=\mathbb{E}|Z_1|^s<\infty$ and that all derivatives of $H$ of orders s and less are continuous in a neighborhood of $\mu=EZ_1$, for some $s\ge 3$. Then there exist polynomials $q_r$ (in one variable), whose coefficients do not depend on n, such that uniformly over all Borel subsets B of $R^1$ one has
		\[\int_{\{g_n(z)\in B\}}\xi_{s,n}(z) dz=\int_B dF_n(u)+o(n^{-(s-2)/2}),\]
		where
		\[F_n(u)=\int_{-\infty}^u\Big[1+\sum_{r=1}^{s-2}n^{-r/2}q_r(v)\Big]\phi_{\sigma^2}(v)dv\quad u\in R^1.\]
		Also, for all nonnegative integers j
		\[\int_{M_n}g_n^j(z)\xi_{s,n}(z)dz=\int_{-\infty}^{\infty}u^jdF_n(u)+o(n^{-(s-2)/2}),\]
		\[\int_{R_k}h_{s-1}^j(z)\xi_{s,n}(z)dz=\int_{-\infty}^{\infty}u^jdF_n(u)+o(n^{-(s-2)/2}).\]
	\end{lemma}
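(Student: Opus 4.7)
The plan is to reduce the $k$-dimensional integral on the left-hand side to a one-dimensional integral by a change of variables that straightens the level sets of $g_n$, and then to exploit the explicit polynomial-times-Gaussian structure of $\xi_{s,n}$ to integrate out the transverse $k-1$ directions in closed form.

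First I would replace $g_n$ by its Taylor polynomial $h_{s-1}$. Since all derivatives of $H$ of orders $s$ and less are continuous near $\mu$, Taylor's theorem gives $g_n(z) = h_{s-1}(z) + R_n(z)$ with a remainder controlled by $C n^{-(s-2)/2}(1+\|z\|^s)$ on a neighborhood such as $M_n = \{\|z\| \le ((s-1)\Lambda \log n)^{1/2}\}$. Because $\xi_{s,n}$ carries Gaussian tails and $\Phi_V(R^k \setminus M_n) = o(n^{-(s-2)/2})$, trading $g_n$ for $h_{s-1}$ inside the indicator $\{g_n(z)\in B\}$ (and controlling the symmetric difference via an $\epsilon$-neighborhood of $\partial B$ together with the absolute continuity of the pushforward) introduces only an $o(n^{-(s-2)/2})$ error uniformly in $B$. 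This reduces everything to the polynomial case.

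Next I would perform a change of variables $z \mapsto (v, z')$, where $v = h_{s-1}(z)$ and $z' \in R^{k-1}$ parametrizes the level set. Since $\sigma^2 > 0$, the gradient $\ell = (l_1, \dots, l_k)$ is nonzero, so I can split $z = v\ell/\|\ell\|^2 + z'$ with $\ell \cdot z' = 0$; the implicit function theorem then expresses the straightening map as a formal power series in $n^{-1/2}$, whose Jacobian likewise admits such an expansion. Substituting into $\xi_{s,n}(z)\,dz$ and integrating $z'$ against the Gaussian marginal on $\ell^{\perp}$ reduces each coefficient in the expansion to a Gaussian moment in $z'$; the resulting quantities are polynomials in $v$ times $\phi_{\sigma^2}(v)$, producing the series $\bigl[1 + \sum_{r=1}^{s-2} n^{-r/2} q_r(v)\bigr]\phi_{\sigma^2}(v)$, with $q_r$ depending only on the partial derivatives of $H$ at $\mu$ and the cumulants of $Z_1$ up to order $r+2$.

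The two moment identities follow by essentially the same argument applied to the integrands $g_n^j\, \xi_{s,n}$ and $h_{s-1}^j\, \xi_{s,n}$: after passing to the polynomial $h_{s-1}$, inserting the extra factor simply multiplies the integrand by $v^j$ under the change of variables, so these reduce to the $B=R$ version of the first identity with weight $u^j$; the Gaussian decay of $\xi_{s,n}$ makes the contribution from outside $M_n$ negligible in both cases. The main obstacle I anticipate is the bookkeeping needed to verify that all contributions of orders $n^{-r/2}$, $r \le s-2$, arising from the Taylor remainder, the implicit-function expansion of the straightening map, the Jacobian expansion, and the Gaussian marginalization, combine consistently into a single polynomial $q_r(v)$ independent of $n$; in addition, the truncation arguments for controlling the contribution of $R^k \setminus M_n$ and the boundary of $B$ must be executed uniformly over arbitrary Borel sets, which requires using the absolute continuity and bounded density of $F_n$ to absorb small neighborhoods of $\partial B$.
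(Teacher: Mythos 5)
This statement is Lemma 2.1 of Bhattacharya and Ghosh (1978); the paper imports it as an auxiliary result and gives no proof of its own, so your attempt can only be judged against the original argument. Your core machinery is the right one and matches that argument: straighten the level sets by a change of variables along the gradient direction, expand the inverse map and its Jacobian as power series in $n^{-1/2}$, and integrate out the $k-1$ transverse coordinates against the Gaussian to produce $\bigl[1+\sum_{r=1}^{s-2}n^{-r/2}q_r(v)\bigr]\phi_{\sigma^2}(v)$. The treatment of the two moment identities is also fine in spirit, since there you are integrating smooth functions and Gaussian tails absorb everything outside $M_n$.

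The genuine gap is in your first reduction. You propose to replace $g_n$ by $h_{s-1}$ \emph{inside the indicator} $\{g_n(z)\in B\}$ and to control the resulting symmetric difference by the measure of an $\epsilon$-neighborhood of $\partial B$. That cannot give the claimed uniformity over \emph{all} Borel subsets $B$ of $R^1$: the swap error is controlled only by the pushforward measure of $(\partial B)^{\epsilon_n}$, and for a general Borel set this need not be small for any $\epsilon>0$ (take $B$ to be the complement of a fat Cantor set, so that $\partial B$ has positive Lebesgue measure and $\lim_{\epsilon\to 0}\int_{(\partial B)^{\epsilon}}\phi_{\sigma^2}>0$). The boundary condition $\sup_B\int_{(\partial B)^{\epsilon}}\phi_{\sigma^2}=O(\epsilon)$ is exactly the restriction the paper imposes on the class $\mathscr{B}$ elsewhere, but Lemma~\ref{lemma: 2.1} is stronger: it is a total-variation statement. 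The correct route is to perform the change of variables with $g_n$ \emph{itself} on $M_n$ (where $\operatorname{grad}g_n$ stays bounded away from zero), so that $\int_{\{g_n\in B\}\cap M_n}\xi_{s,n}\,dz=\int_B f_n(u)\,du$ holds exactly, and then to prove the $L^1$ bound $\int\lvert f_n(u)-[1+\sum_r n^{-r/2}q_r(u)]\phi_{\sigma^2}(u)\rvert\,du=o(n^{-(s-2)/2})$; the Taylor expansion of $g_n$ enters only at this density level, where your implicit-function and Jacobian expansions apply verbatim, and uniformity over Borel $B$ then comes for free. A smaller slip: your remainder bound $Cn^{-(s-2)/2}(1+\|z\|^{s})$ is too weak, since on $M_n$ it only yields $O(n^{-(s-2)/2}(\log n)^{s/2})$; the Lagrange form gives $O(n^{-(s-1)/2}\|z\|^{s})$, which is what makes the remainder $o(n^{-(s-2)/2})$ on $M_n$.
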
  
	
	\begin{lemma}\label{An}
		Let $Z_{1,n}$, $A_n$ and $c_n$ be defined as in Section 
		\ref{sec6}. Additionally, let $C(s,k)$ be an absolute constant. Then for sufficiently large $n$, we have
		\[c_n\ge A_n.\]
	\end{lemma}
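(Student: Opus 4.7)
My plan is to unwind the definitions of $c_n$ and $A_n$ and reduce the inequality to a lower bound on a conditional moment. Writing $\rho_r^* := \mathbb{E}^*\|T_n\tilde\Z_1\|^r$ and using $c_n = n^{1/2}/(16\rho_3)$ and $A_n = C(s,k)\,n^{1/2}/(\rho_{s+a+1}^*)^{1/(s+a-1)}$, the claim $c_n \ge A_n$ is equivalent to
\[(\rho_{s+a+1}^*)^{1/(s+a-1)} \;\ge\; 16\,C(s,k)\,\rho_3.\]
So the task reduces to producing a lower bound, of constant order in $n$, on the left-hand side, and then arranging $C(s,k)$ to be small enough.

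The main tool will be the Lyapunov-type monotonicity $r\mapsto(\rho_r/\rho_2^{r/2})^{1/(r-2)}$ from Lemma~\ref{lemma: 7.2}(iii), applied conditionally to $T_n\tilde\Z_1$ at $r=3$ and $r=s+a+1$. A short rearrangement yields
\[(\rho_{s+a+1}^*)^{1/(s+a-1)} \;\ge\; \rho_3^* \cdot (\rho_2^*)^{(2-s-a)/(s+a-1)}.\]
Since the exponent $(2-s-a)/(s+a-1)$ is negative, controlling this factor amounts to an upper bound on $\rho_2^*$: using Corollary~14.2 of Bhattacharya--Rao (invoked in the surrounding proof) that $\|T_n\|$ is bounded, combined with Lemma~\ref{lemma: 14.1}(iv), gives $\rho_2^* \le \|T_n\|^2\,\mathbb{E}^*\|\tilde\Z_1\|^2 \le C'\rho_2$ uniformly in $n$. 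For $\rho_3^*$, Lemma~\ref{lemma: 14.1}(ii) implies $\mathbb{E}\|\tilde\Z_1\|^3 \to \rho_3$ and the same corollary gives $T_n \to V^{-1/2} = I$, so $\mathbb{E}\|T_n\tilde\Z_1\|^3 \to \rho_3$; this produces a lower bound of the desired order for $\rho_3^*$. Combining these pieces yields $(\rho_{s+a+1}^*)^{1/(s+a-1)} \ge c(s,k)\rho_3$, and choosing $C(s,k)\le c(s,k)/16$ closes the argument.

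The principal obstacle is that $\rho_3^*$ is a random variable measurable with respect to $C_n$, so the Lyapunov lower bound is itself random and a deterministic inequality $c_n \ge A_n$ cannot hold on every realization: $\rho_3^*$ can be atypically small on sets of small probability. This is not fatal, because $A_n$ is used in the proof of Theorem~\ref{distance} only inside an outer expectation (as the cut-off of an integral over $\{\|\mathbf{t}\|>A_n\}$), so the lemma is effectively required in expectation or with probability tending to one, and the convergence of the unconditional moments above delivers the inequality on such an event. Making explicit that this ``averaged'' version is enough for the ambient integration argument, rather than insisting on a pointwise bound, is where most of the care lies.
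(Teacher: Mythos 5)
Your argument is correct in substance but takes a genuinely different route from the printed proof of Lemma \ref{An}. The paper's proof consists of rewriting $A_n$ with the \emph{unconditional} moment $\mathbb{E}\Vert \tilde\Z_{1}\Vert^{s+k+1}$ in the denominator and asserting $A_n/c_n=o(n^{(s-2)/(2(s+k-1))})/O(n^{1/2})<1$, i.e.\ a pure order-of-magnitude comparison. That assertion is not self-contained: whenever the $(s+k+1)$-th moment of $\Z_1$ is finite (e.g.\ bounded $\Z_1$) the truncated moment stays bounded, so $A_n\asymp n^{1/2}\asymp c_n$ and the inequality hinges entirely on the constants, which the displayed ratio does not track. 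Your reduction of $c_n\ge A_n$ to $(\rho^*_{s+a+1})^{1/(s+a-1)}\ge 16\,C(s,k)\,\rho_3$, established via the Lyapunov monotonicity of Lemma \ref{lemma: 7.2}(iii) applied to $T_n\tilde\Z_1$ together with the boundedness of $\Vert T_n\Vert$ and the convergence of the truncated third absolute moment, followed by shrinking $C(s,k)$, supplies exactly the quantitative step the printed proof skips; it is also consistent with the fact that the surrounding text cites Lemma \ref{lemma: 7.2} for this pair of inequalities even though the proof of the lemma never invokes it. Your closing caveat is likewise well taken: as defined in Section \ref{sec6}, $A_n$ involves the conditional moment $\mathbb{E}^*$, so a pointwise deterministic inequality cannot hold on every realization of $C_n$; the paper silently switches to the unconditional expectation inside the lemma's proof, whereas you keep the conditional reading and observe that only an averaged (or high-probability) version is needed, the set inclusion $\{\Vert\bm{t}\Vert>A_n\}\subseteq\{A_n<\Vert\bm{t}\Vert\le c_n\}\cup\{\Vert\bm{t}\Vert>c_n\}$ being a harmless over-count in any case. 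The only small slip is citing Lemma \ref{lemma: 14.1}(ii) for $\mathbb{E}\Vert\tilde\Z_1\Vert^{3}\to\rho_3$; that part concerns coordinate moments, and it is part (i) (together with (iv)) that controls the absolute moments of the truncated vectors.
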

	\begin{proof}
		Based on the definition of $T_n$, we can derive:
		\[A_n=\frac{C(s,k)n^{1/2}}{(E\|T_nZ_{1n}\|^{s+k+1})^{1/(s+k-1)}}=\frac{C(s,k)n^{1/2}}{(E\|Z_{1n}\|^{s+k+1})^{1/(s+k-1)}}.\]
		When $n$ is sufficiently large,
		\[\frac{A_n}{c_n}=\frac{o(n^{(s-2)/2(s+k-1)})}{O(n^{1/2})}<1.\]
		Therefore, we obtain the result.
	\end{proof}

%
%
%
%
%
%
%
%
%
%
%
%
%
%

\end{document}